\documentclass[11pt]{amsart}
\usepackage{amssymb}
\usepackage{amsfonts}
\usepackage{amsmath}
\usepackage{graphicx}
\usepackage{xcolor}
\setcounter{MaxMatrixCols}{30}
\setcounter{page}{1}
\usepackage{amsmath}
\usepackage{mathrsfs}
\usepackage{stmaryrd}
\usepackage{epsfig,color}
\usepackage{blindtext}
\usepackage{enumerate}
\usepackage{hyperref}
\usepackage{url}
\usepackage{bbm}
\usepackage{nicefrac,mathtools}
\usepackage{bm}   
\DeclareGraphicsExtensions{.pdf,.jpeg,.png}
\usepackage{epstopdf}
\usepackage{cancel} %for editing
\usepackage[normalem]{ulem} %for editing
\usepackage{verbatim} %for comment

\usepackage{ulem}

\usepackage{color}
\usepackage[msc-links, lite]{amsrefs}
\usepackage{geometry}
\geometry{left=2.80cm,right=2.8cm,top=3.5cm,bottom=3.2cm}

\setcounter{section}{0}

\newtheorem{theorem}{Theorem}[section]

\newtheorem{proposition}[theorem]{Proposition}
\newtheorem{lemma}[theorem]{Lemma}

\theoremstyle{definition}
\newtheorem{definition}[theorem]{Definition}
\theoremstyle{remark}
\newtheorem{remark}[theorem]{Remark}

\numberwithin{equation}{section}

\newcommand{\dv}{\mathrm{div}}

\newcommand{\mf}{\mathbf}
\newcommand{\mb}{\mathbb}
\newcommand{\mc}{\mathcal}
\newcommand{\ms}{\mathscr}
\newcommand{\mk}{\mathfrak}
\newcommand{\mr}{\mathrm}
\newcommand{\oli}{\overline}

\newcommand{\wti}{\widetilde}

\newcommand{\Vol}{\mathrm{Vol}}
\newcommand{\Area}{\mathrm{Area}}

\newcommand{\dist}{\operatorname{dist}}

\title{Improved $C^{1,1}$ regularity for multiple membranes problem}

\date{}

\author{Zhichao Wang}
%\address{Department of Mathematics, University of British Columbia, Vancouver, BC V6T 1Z2, Canada}
%\email{zhichao@math.ubc.ca}
\address{Shanghai Center for Mathematical Science, 2005 Songhu Road, Fudan University, Shanghai, 200438, China}
\email{zhichao@fudan.edu.cn}

\author{Xin Zhou}
\address{Department of Mathematics, 531 Malott Hall, Cornell University,	Ithaca, NY 14853, USA}
\email{xinzhou@cornell.edu}

\begin{document}

\begin{abstract}
%We prove the optimal $C^{1,1}$-regularity for $C^{1,\alpha}$ ($\alpha\in(0,1)$) solutions to the multiple membrane problem of the same divergence form. In particular, the isotopy minimizer of prescribed mean curvature surfaces are $C^{1,1}$ provided they have bounded first variation as an immersion. This forms a key step in our recent work on Yau's four minimal spheres conjecture \cite{wang-Zhou-4-spheres}.
%We provide an elementary proof of the $C^{1,1}$-regularity for $C^{1,\alpha}$ ($\alpha\in(0,1)$) solutions to the multiple membrane problem of the same divergence form. This regularity estimate was essentially used in our recent work on Yau's four minimal spheres conjecture \cite{wang-Zhou-4-spheres}.
%We prove the $C^{1,1}$-regularity for stationary $C^{1,\alpha}$ ($\alpha\in(0,1)$) solutions to the multiple membrane problem of the same divergence form. This regularity estimate was essentially used in our recent work on Yau's four minimal spheres conjecture \cite{wang-Zhou-4-spheres}.
We prove the $C^{1,1}$-regularity for stationary $C^{1,\alpha}$ ($\alpha\in(0,1)$) solutions to the multiple membrane problem. This regularity estimate was essentially used in our recent work on Yau's four minimal spheres conjecture \cite{wang-Zhou-4-spheres}.
\end{abstract}

\maketitle

\section{Introduction}
The {\em multiple membranes problem}, firstly studied by Vergara-Caffarelli \cite{VC-71}, is to describe the equilibrium position of multiple membranes subject to forces and fixed boundary conditions, and constrained by the condition that they are not allowed to cross each other.

In \cite{VC-two-cmc}, Vergara-Caffareli proved $C^{1,\alpha}$-regularity for the two membranes that minimize the constant mean curvature functional. The regularity for minimizers of two-membranes problem was improved to be $C^{1,1}$ by Silvestre \cite{Sil-2-membrane} in a more general case; in particular, the optimal $C^{1,1}$-regularity holds true for prescribing mean curvature functional if the prescribing function does not change sign. The observation was that the height difference of the two membranes is a solution to the obstacle problem and then the classical work due to L. Caffarelli \cite{Caffarelli-obstacle-98} can be applied. We also refer to \citelist{\cite{Caffaralli-De-Silva-Savin-different-operator} \cite{Caffarelli-Duque-Vivas-fully-nonlinear}} for the two membranes problem for operators of different forms or fully nonlinear operators.

%However, to our knowledge (e.g. \citelist{\cite{Caffaralli-De-Silva-Savin-different-operator} \cite{Caffarelli-Duque-Vivas-fully-nonlinear}}), even for the two membranes problem for the prescribing mean curvature functional, the $C^{1,1}$ regularity is unknown at the points where the prescribing function vanishes.

The problem with more than two membranes is more challenging. For the linear case, Chipot-Vergara-Caffarelli \cite{Chipot-VC-N-mambranes85} proved the existence of the $C^{1,\alpha}$ solution. Particularly, for the Laplacian operator, the regularity can be improved to be $C^{1,1}$ by Savin-Yu \cite{Savin-Yu}. For quasi-linear case, we refer to \cite{Azevedo-Rodrigues-Jose-Santos-p-laplacian} for the $p$-Laplacian operators. In a recent work \cite{SS23}*{\S 11}, Sarnataro-Stryker outlined a proof for $C^{1,1}$-regularity for minimizers of multiple membranes problem for prescribing mean curvature functionals using \cites{Sil-2-membrane, Savin-Yu}; see Remark \ref{rmk:after main thm}\eqref{item:rmk:SS} for more discussions.

\begin{comment}
    In this paper, we provide a %direct 
proof that a stationary $C^{1,\alpha}$ solution to the multiple membranes problem, with the same divergence form, can always be improved to be $C^{1,1}$ using only basic elliptic estimates. %see Remark \ref{rmk:after main thm} for more discussions.
\end{comment}

In this paper, we prove the optimal $C^{1,1}$-regularity for all stationary $C^{1,\alpha}$ solutions to the multiple membranes problem, satisfying a natural bounded first variation assumption; see Remark \ref{remark:two assumptions}. The proof relies only on classical elliptic estimates.

\subsection{Notations and main results}
We consider a general second order elliptic operator of divergence form. For simplicity, we write 
\[ 
    \bm U_r:=B_r(0)\times \mb R\times B_1(0).\] 
Let $\lambda\in(0,1),\Lambda>1,\alpha\in(0,1)$ be constants and 
$\ms A(x,z,\bm p)\in C^3(\mb R^n\times \mb R\times \mb R^n,\mb R^n), 
\ms B(x,z,\bm p)\in C^2(\mb R^n\times \mb R\times \mb R^n)$ 
satisfying
\begin{gather}
    \Lambda|\bm \xi |^2
        \geq  (\partial_{\bm p}\ms A)|_{\bm U_2}(\bm \xi,\bm \xi)
        \geq \lambda |\bm \xi|^2, \ \forall \bm \xi\in \mb R^n;
        \label{eq:Lambda and lambda}\\
    \| \ms A\|_{C^3(\bm U_2)}+\|\ms B\|_{C^2(\bm U_2)}\leq \Lambda;
        \label{eq:ms AB norm}\\
    \ms A(x,0,0)=0,\quad \ms B(x,0,0)=0.
        \label{eq:ms AB at x00}
\end{gather}
Define an operator $\mc Q$ by 
\[  
    \mc Qu:=-\dv\, \ms A(x,u,Du)+\ms B(x,u,Du).\]
Let $f^1,\cdots,f^n$ and $g$ be measurable functions. Denote by 
$\mf f=(f^1,\cdots,f^n)$.
We say that
$\mc Qu=-\dv\, \mf f+ g$ in the {\em weak sense} in $U\subset \mb R^n$ if for all $\phi\in C^1_c(U)$,
\[
    \int_{U}\ms A(x,u,Du)D\phi+\ms B(x,u,Du)\phi-\mf f\cdot D\phi-g\phi\,\mr dx=0.\]
%\textcolor{red}{(Why do we need this line?)}

We call a collection of $C^1$ functions $\{u_j\}_{j=1}^m$ over a smooth domain $U\subset\mb R^n$ {\em ordered graphs} if 
\[  u_1\leq u_2\leq \cdots \leq u_m.\]
Let $\{g_j\}_{j=1}^m$ be a family of measurable function. We say a collection of ordered graphs $\{u_j\}_{j=1}^m$ is a {\em weak solution} to the equation 
\begin{equation}\label{eq:main pde}
    \sum_{j=1}^m(\mc Qu_j+g_j)=0
\end{equation}  
in $U$, if for all $\phi\in C_c^1(U)$,
\[
    \sum_{j=1}^m\int_{U}\ms A(x,u_j,Du_j)D\phi+\ms B(x,u_j,Du_j)\phi+g_j\phi\,\mr dx=0.\]
    
%Let $\{g_j\}$ be a family of measurable function. We consider the {\em weak solution} $\{u_j\}$ to the equation 
%\begin{equation}\label{eq:main pde}
%    \sum_{j=1}^m(\mc Qu_j+g_j)=0
%\end{equation}  
%in $U$; that is, for all $\phi\in C_c^1(U)$,
%\[
%    \sum_{j=1}^m\int_{U}\ms A(x,u_j,Du_j)D\phi+\ms B(x,u_j,Du_j)\phi+g_j\phi\,\mr dx=0.\]

%Let $\{u_j\}_{j=1}^m$ be a collection of ordered continuous functions over a smooth domain $U\subset\mb R^n$ with
%\[  u_1\leq u_2\leq \cdots \leq u_m.\]
%We say that $\{u_j\}$ are {\em ordered graphs}.
\begin{comment}
We further assume that $\{u_j\}$ satisfy the {\em subsystem condition}, i.e., for any open set $U'\subset U$ and $1\leq m_1\leq m_2\leq m$ so that
\[
    u_{m_1-1}<u_{m_1}\leq \cdots\leq u_{m_2}<u_{m_2+1}, \quad 
    (u_{0}:=-\infty, \ u_{m+1}:=+\infty), \quad \forall x\in U',
    \]
then 
\[   
    \sum_{j=m_1}^{m_2}(\mc Qu_j+g_j)=0\]
in $U'$ in the weak sense. We will also require that $\{u_j\}$ satisfy the {\em $\kappa$-condition}, i.e., for each $j=1,\cdots,m$,
\[
    -\kappa\leq \mc Qu_j\leq \kappa
\]
in $U$ in the weak sense; that is, for all $\phi\in C_c^1(U)$ with $\phi\geq 0$, 
\[ 
    -\kappa\int_{U}\phi\,\mr dx 
        \leq  \int_{U}\ms A(x,u_j,Du_j)D\phi+\ms B(x,u_j,Du_j)\phi\,\mr dx
        \leq \kappa\int_{U}\phi\,\mr dx.\]
\end{comment}

We assume that $\{u_j\}$ satisfy two additional natural conditions: 
\begin{enumerate}[(I)]
    \item\label{item:subsystem} the {\em subsystem condition}: for any open set $U'\subset U$ and $1\leq m_1\leq m_2\leq m$ so that
\[
    u_{m_1-1}<u_{m_1}\leq \cdots\leq u_{m_2}<u_{m_2+1}, \quad 
    (u_{0}:=-\infty, \ u_{m+1}:=+\infty), \quad \forall x\in U',
    \]
we have 
\[   
    \sum_{j=m_1}^{m_2}(\mc Qu_j+g_j)=0\]
in $U'$ in the weak sense;

    \item\label{item:kappa condition} the {\em $\kappa$-condition}: for each $j=1,\cdots,m$,
\[
    -\kappa\leq \mc Qu_j\leq \kappa
\]
in $U$ in the weak sense; that is, for all $\phi\in C_c^1(U)$ with $\phi\geq 0$, 
\[ 
    -\kappa\int_{U}\phi\,\mr dx 
        \leq  \int_{U}\ms A(x,u_j,Du_j)D\phi+\ms B(x,u_j,Du_j)\phi\,\mr dx
        \leq \kappa\int_{U}\phi\,\mr dx.\]
\end{enumerate}
\begin{remark}\label{remark:two assumptions}
In Section \ref{SS:background from isotopy minimizing problem} below, we will introduce the connection between this PDE problem and a geometric isotopy energy minimizing problem. The subsystem condition \eqref{item:subsystem} is a direct consequence when the system is only a stationary point for the prescribing mean curvature energy, while the $\kappa$-condition \eqref{item:kappa condition} is implied by assuming that each graph of $\{u_j\}$ has bounded $\kappa$-first variation. Both conditions are satisfied by energy minimizers, but weak solutions satisfying \eqref{item:subsystem} and \eqref{item:kappa condition} form a much more general class than just minimizers. For instance, both conditions are preserved by $C^{1, \alpha}$ convergence, but the $C^{1, \alpha}$-limits are not necessarily isotopy energy minimizers. In particular, the $C^{1,1}$-regularity of these $C^{1, \alpha}$-limits play a crucial role in our recent work on Yau's four minimal spheres conjecture \cite{wang-Zhou-4-spheres}.
\end{remark}

%In this paper, we prove the optimal $C^{1,1}$ regularity for the ordered $C^{1,\alpha}$ solution to the equation \eqref{eq:main pde} provided they satisfy the subsystem condition and $\kappa$-condition.

We now state our main $C^{1,1}$-regularity result.
\begin{theorem}\label{thm:main thm}
Let $\mc Q$ be an operator satisfying \eqref{eq:Lambda and lambda}, \eqref{eq:ms AB norm} and \eqref{eq:ms AB at x00}.
Let $\{g_j\}_{j=1}^m$ be a collection of $C^1$ functions over $B_1(0)$.
Let $\{u_j\}_{j=1}^m$ be ordered $C^{1,\alpha}$ ($\alpha\in (0,1)$) graphs over $B_1(0)\subset \mb R^n$ which form a weak solution to \eqref{eq:main pde} in $B_1(0)$ and satisfy the subsystem condition \eqref{item:subsystem} and $\kappa$-condition \eqref{item:kappa condition} ($\kappa>0$).
%Suppose in addition that $\{u_j\}$ satisfy the subsystem condition and $\kappa$-condition ($\kappa>0$). 
Then $u_j\in C^{1,1}_{loc}(B_1(0))$. 
Moreover, if 
\begin{equation} \label{eq:assumption of bound 1}
    \kappa\leq 1,\quad  \|g_j\|_{C^1(B_1(0)}\leq 1, \quad \|u_j\|_{C^{1,\alpha}(B_1(0))}\leq 1,
    \end{equation}
then 
\begin{equation}\label{eq:C11 bound}
    \sum_{j=1}^m\|u_j\|_{C^{1,1}(B_{1/2}(0))}
    \leq C\sum_{j=1}^m\Big(
        \|u_j\|_{C^{1,\alpha}(B_1(0))}
        +\|g_j\|_{C^1(B_1(0))}+\kappa
    \Big),
\end{equation}
where $C=C(n,m,\Lambda,\lambda,\alpha)$.
\end{theorem}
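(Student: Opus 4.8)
The plan is to reduce the $m$-membrane system to a family of two-sided obstacle problems by a local decomposition according to the contact pattern, and then to bootstrap. Fix an interior point $x_0 \in B_{1/2}(0)$. Near $x_0$, split the index set into \emph{clusters}: maximal blocks $\{m_1,\dots,m_2\}$ of consecutive indices such that $u_{m_1}(x_0) = \cdots = u_{m_2}(x_0)$ but these coincide with neither $u_{m_1-1}(x_0)$ nor $u_{m_2+1}(x_0)$. By continuity of the $u_j$, on a small ball $B_\rho(x_0)$ the separated strict inequalities $u_{m_1-1} < u_{m_1}$ and $u_{m_2} < u_{m_2+1}$ persist, so the subsystem condition \eqref{item:subsystem} applies to each cluster and we get $\sum_{j=m_1}^{m_2}(\mc Q u_j + g_j) = 0$ weakly on $B_\rho(x_0)$. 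Different clusters are then decoupled, so it suffices to prove the $C^{1,1}$ estimate cluster by cluster; i.e., we may assume all of $u_1,\dots,u_m$ touch at $x_0$.

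For a single cluster $u_1 \le \cdots \le u_m$ summing to the PDE, I would argue inductively on the cluster size $m$. If $m=1$, then $\mc Q u_1 = -g_1$ weakly, and since $g_1 \in C^1$ (hence bounded), interior $W^{2,p}$ (Calderón–Zygmund) estimates for the linearized divergence-form operator — available because $u_1 \in C^{1,\alpha}$ makes the coefficients $\partial_{\bm p}\ms A(x,u_1,Du_1)$ etc. Hölder continuous, and \eqref{eq:Lambda and lambda} gives uniform ellipticity — yield $u_1 \in W^{2,p}_{loc}$ for all $p<\infty$, hence $u_1 \in C^{1,1-\epsilon}_{loc}$; the $\kappa$-condition isn't even needed here, but Schauder after one more differentiation upgrades this to $C^{1,1}$ (in fact $C^{2,\alpha}$). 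For $m \ge 2$: consider $w := u_m - u_1 \ge 0$. Subtracting the equations for the sub-cluster $\{1,\dots,m-1\}$ (valid wherever $u_{m-1} < u_m$) and $\{2,\dots,m\}$ (valid wherever $u_1 < u_2$) from the full-cluster equation, one finds that on the \emph{non-contact set} $\{w>0\}$ the function $w$ solves a uniformly elliptic divergence-form equation $\mc L w = h$ with $h$ controlled in $L^\infty$ by $\sum\|g_j\|_{C^1}$; meanwhile on the \emph{contact set} $\{w=0\}$ one has $Dw = 0$ a.e., and the $\kappa$-condition forces the relevant measure-theoretic Laplacian of $w$ to be bounded. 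This is exactly the structure of the two-sided obstacle problem treated in \cites{Sil-2-membrane, Caffarelli-obstacle-98}, giving $w \in C^{1,1}_{loc}$ with the quantitative bound \eqref{eq:C11 bound} in terms of the data.

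Given $w = u_m - u_1 \in C^{1,1}$, I would then peel off $u_1$ and $u_m$. On $\{w>0\}$ all $u_j$ with $1<j<m$ lie strictly between $u_1$ and $u_m$ on a neighborhood, so the strictly-smaller sub-cluster $\{1,\dots,m-1\}$ satisfies its own subsystem equation there and the inductive hypothesis applies, while on (the interior of) $\{w=0\}$ all $u_j$ agree with $u_1$ and the $m=1$ case applies; a covering/patching argument using that both pieces have the same quantitative $C^{1,1}$ bound, together with the fact that $w$ and its gradient vanish to the right order on $\partial\{w>0\}$, glues these into a global interior $C^{1,1}$ bound for each $u_j$. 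Tracking constants through each step — Calderón–Zygmund, the obstacle-problem estimate, and the induction — yields a bound depending only on $n,m,\Lambda,\lambda,\alpha$ and the normalized data, i.e.\ \eqref{eq:C11 bound}.

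The main obstacle I anticipate is the middle step: correctly extracting, from the \emph{sum} equation \eqref{eq:main pde} plus the subsystem condition, a genuine two-sided obstacle problem for the single scalar $w = u_m-u_1$ with the right one-sided estimates on both the contact and non-contact sets — in particular verifying that the "right-hand side" stays bounded across the free boundary and that $C^{1,\alpha}$ regularity of the $u_j$ (rather than, say, Lipschitz) is enough to run Caffarelli's obstacle-problem machinery for divergence-form operators with merely Hölder coefficients. The quantitative dependence of the obstacle-problem constant on the ellipticity and the Hölder norm of the coefficients (which themselves depend on $\|u_j\|_{C^{1,\alpha}}$) is the delicate bookkeeping point, and closing the induction without the constant blowing up in $m$ requires care in the patching step.
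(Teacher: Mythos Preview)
Your approach differs substantially from the paper's, and the middle step (which you correctly flagged) has a genuine gap. The claim that $w = u_m - u_1$ solves an equation $\mc L w = h$ on the non-contact set $\{w>0\}$ is not justified: the subsystem condition yields $\mc Q u_m = -g_m$ only on $\{u_{m-1} < u_m\}$ and $\mc Q u_1 = -g_1$ only on $\{u_1 < u_2\}$, and neither set equals $\{w>0\}$ in general (think of $m=4$ with $u_1=u_2<u_3=u_4$ at a point). What the $\kappa$-condition does give is $-2\kappa \le \mc L w \le 2\kappa$ weakly everywhere, i.e.\ $\mc L w \in L^\infty$; but an $L^\infty$ right-hand side only yields $W^{2,p}$ for all $p<\infty$ (hence $C^{1,\beta}$ for every $\beta<1$), not $C^{1,1}$. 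The Caffarelli--Silvestre obstacle-problem $C^{1,1}$ theory crucially uses an actual equation on the non-contact set, so this route is blocked as written. Even granting $w\in C^{1,1}$, your peeling step must control $D^2 u_j$ uniformly up to the free boundaries $\partial\{u_{m-1}<u_m\}$ and $\partial\{u_1<u_2\}$, where the interior inductive estimate degenerates; note that $w$ controls $u_m-u_1$ but not, e.g., $u_m-u_{m-1}$, so its quadratic decay does not directly rescue the induction.

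The paper avoids free-boundary theory entirely. From the two-sided weak bound on $\mc L(u_{i+1}-u_i)$ and weak Harnack it gets $|u_{i+1}-u_i|\le C\kappa r^2$ near touching points (Lemma~3.1), then by an induction on $m$---splitting into connected components and applying interior $C^{1,\alpha}$ estimates to differences of component averages---the gradient bound $|Du_{i+1}-Du_i|\le C(\kappa+\kappa_0) r$ (Lemma~3.2). These feed into difference-quotient arguments to get uniform $W^{2,2}$ bounds (Proposition~3.5), and then Campanato-type arguments (approximating $D_k\varphi$, $\varphi$ the average, by harmonic functions) establish a monotonicity $r^{-n}\int_{B_r}|D^2u_j|^2 \le C$ uniformly in $r$ (Proposition~4.7). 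The $C^{1,1}$ bound finally comes from Poincar\'e plus interior $C^{1,\alpha}$ estimates applied to the tilt-excess $u_j - (u_j)_{B_r} - (Du_j)_{B_r}\cdot x$ (Theorem~5.4). The inductions throughout are organized around the radius at which $\{u_j\}$ disconnects, not around peeling off extreme layers.
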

\begin{remark}\label{rmk:after main thm}
\begin{enumerate}[(i)]
    
    \item If each term in \eqref{eq:assumption of bound 1} is bounded by $\Lambda'$, then \eqref{eq:C11 bound} still holds and the constant $C$ will also depend on $\Lambda'$. 
    
    \item\label{item:rmk:general two} Our regularity improves Silvestre \cite{Sil-2-membrane} even for 
\[ 
    m=2,\quad \ms A(x,z,\bm p)=\bm p/\sqrt{1+|\bm p|^2},\quad \ms B=0, \]
since we don't require that $g_2>g_1$.  
%We need the $\kappa$-condition and subsystem condition since  $\{u_j\}$ in Theorem \ref{thm:main thm} is only a $C^{1,\alpha}$ solution which may not be a minimizer to the multiple membrane problem. These two conditions are always satisfied for minimizers.

    \item\label{item:rmk:SS} Sarnataro-Stryker \cite{SS23}*{Corollary 11.2} outlined a similar estimate for minimizers by virtue of free boundary problems \cite{Caffarelli-obstacle-98} and methods in \citelist{\cite{Sil-2-membrane}\cite{Savin-Yu}}. Their result applied to arbitrary prescription functions, even though their proof relied on \cite{Sil-2-membrane} which only holds for prescription functions which do not change sign.  In this paper, we give a proof which holds for stationary solutions with arbitrary $\{g_j\}$ while only using standard elliptic estimates in \cite{GT}.
\end{enumerate}
\end{remark}

\subsection{Background from isotopy minimizing problem}
\label{SS:background from isotopy minimizing problem}
Fix a smooth open domain $U\subset \mb R^n$, a smooth curvature prescription function $\hslash\in C^\infty\big(\oli{U}\times[-1, 1]\big)$, and a smooth Riemannian metric $g$ in $\oli{U}\times [-1, 1]$.
We also assume that $U\times\{0\}$ is a minimal hypersurface in $(\oli U\times [-1,1],g)$, so that the second condition of \eqref{eq:ms AB at x00} is satisfied.
%so that $U\times\{0\}$ is a minimal hypersurface in $(\oli U\times [-1,1],g)$\footnote{This is to satisfy the second one of \eqref{eq:ms AB at x00}.}.
Note that all discussion in the paper will be restricted to $U\times (-1,1)$. Let $\zeta(x, z)$ denote the $(n+1)$-dimensional volume element of $g(x, z)$. Then the area element w.r.t. $g$ is a smooth function:
\[ 
    F: U\times (-1, 1) \times \mb R^n \to [0, \infty), \]
defined as follows: for any $(x, z, \bm p)\in U\times(-1, 1)\times\mb R^n$, we use $P_{x, z, \bm p}$ to denote the $n$-dimensional parallelogram in $\mb R^{n+1}$ generated by
\[ 
    \{e_1, \cdots, e_n\}, \text{ where } e_i=\partial_{x^i} + {\bm p}^i \partial_z. \]
Then,
\[ 
    F(x, z, \bm p)= \text{$n$-volume of $P_{x, z, \bm p}$ under $g(x, z)$}.\] 

Given a $C^1$-function $u: U\to\mb R$, we define the generalized area as the following elliptic functional,
\[    
    \Area(\mr{Graph}_u)=\int_U F(x,u,Du)\,\mr dx .
\]
%Where  is a smooth function.
The first variation formula of $\Area(\mr{Graph}_u)$ w.r.t. variations $t\mapsto \mr{Graph}_{u+t \phi}$ 
for a fixed $\phi\in C^{1}_c(\Omega)$ is given by
\begin{equation}\label{eq:general first variation0}
    \delta\Area_u(\phi) 
    = \int_{U} {\partial_{\bm p}}
        F\big(x, u(x), D u(x)\big)\cdot D \phi 
        + {\partial_z} F \big(x, u(x), D u(x)\big) \cdot \phi
    \, \mr d\mc H^n(x).
\end{equation}

Let $\{u_j\}_{j=1}^m$ be a collection of ordered $C^{1,\alpha}$-graph functions over $U\subset\mb R^n$.  %with \[  u_1\leq u_2\leq \cdots \leq u_m.\]
Let $\Sigma$ be the varifold induced by the graphs of $\{u_j\}$ and 
$\Omega\in \mc C\big(U\times(-1, 1)\big)$ 
be a Caccioppoli set so that 
$\partial \Omega= \sum_{j=1}^m (-1)^{j-1}\llbracket\mr{Graph}_{u_j}\rrbracket$ in the sense of currents; see \cite{Si}. 
The $\mc A^h$-functional for the pair $(\Sigma, \Omega)$, $\mc A^h(\Sigma, \Omega) = \Area(\Sigma) - \int_\Omega \hslash\, \mr d \Vol_g$, defined in \cite{wang-Zhou-4-spheres}*{(1.1)} (see also \cites{ZZ18, SS23}), can be written as
\[ 
    \mc A^h(\Sigma, \Omega) 
    = \Area(\mr{Graph}_u) 
    - \int_U\Big(
        \sum_{j=1}^m (-1)^{j-1} \int_{-1}^{u_j(x)} 
        \hslash(x, z)\cdot \zeta(x, z) 
        \,\mr d z
    \Big) \mr d x. \]
For simplicity, we will abuse the notation by writing 
\[ 
    h(x, z) 
    := \hslash(x, z)\cdot \zeta(x, z) 
    \in C^{\infty}\big(
        \oli{U}\times [-1, 1]
        \big).\]
The first variation of $\mc A^h(\Sigma,\Omega)$ w.r.t. variations $t\mapsto \mr{Graph}_{u+t \phi}$ is 
\begin{align*}%\label{eq:1st variation}
    \delta\mc A^h_{\Sigma,\Omega}(\phi)
    =\sum_{j=1}^m\int_{U} {\partial_{\bm p}}
        F\big(x, u_j, D u_j\big)\cdot D \phi 
        + {\partial_z} F\big(x, u_j, D u_j\big)\cdot \phi 
        +(-1)^jh(x,u_j)\phi
    \,\mr dx, 
\end{align*}
for any $\phi\in C_c^1(U)$. 

We say that $(\Sigma, \Omega)$ is {\em $\mc A^h$-stationary} if for any one parameter family of diffeomorphisms $\{\varphi_t\}_{-1\leq t\leq 1}$ of $U\times (-1,1)$ satisfying that $\varphi_t=\mr{id}$ in a neighborhood of $\partial U\times [-1,1]$, and $\varphi_0=\mr {id}$, then 
\[  \frac{d}{dt}\Big|_{t=0}\mc A^h\big(\varphi_t(\Sigma),\varphi_t(\Omega)\big) = 0.\]
Clearly, this implies that for any $\phi\in C_c^1(U)$, 
\begin{equation}\label{eq:Ah stationarity}
    \sum_{j=1}^m\int_{U} 
    {\partial_{\bm p}} F\big(x, u_j, D u_j\big)\cdot D \phi 
    + {\partial_z} F\big(x, u_j, D u_j\big)\cdot \phi 
    +(-1)^jh(x,u_j)\phi
    \,\mr dx 
        = 0.   
\end{equation}  

%{\color{blue}Moreover, if $\Sigma$ is not connected in $U\times [-1,1]$ (i.e., for each $j=1,\cdots, m-1$, there exists $x_j\in U$ such that $u_j(x_j)=u_{j+1}(x_j)$), then each connected component will satisfy \eqref{eq:Ah stationarity}. This is exactly the subsystem condition that we defined. }

Moreover, if a subcollection of graphs $\{u_j\}_{j=m_1}^{m_2}$ do not touch other graphs, we may choose the variation $\{\varphi_t\}$ to be supported away from other graphs, and then $\{u_j\}_{j=m_1}^{m_2}$ will satisfy \eqref{eq:Ah stationarity}, which is exactly the subsystem condition \eqref{item:subsystem}.

The $\kappa$-condition \eqref{item:kappa condition} is natural in the sense that, if $\mr{Graph}_{u_j}$ has $\kappa$-bounded first variation in $U$, then by the first variation formula of the area functional, we have
\[
-\kappa\leq -\mr{div}{\partial_{\bm p}} F\big(x, u_j, D u_j\big)
    + {\partial_z} F\big(x, u_j, D u_j\big)\leq \kappa
\]
in the weak sense in $U$.
%\textcolor{red}{Explain how subsystem condition follows, and explain by a sentence that $\kappa$-condition follows from bounded first variation.}

%Applying Theorem \ref{thm:main thm} to $\mc A^h$-stationary graphs, we obtain the $C^{1,1}$ regularity.
Applying Theorem \ref{thm:main thm} to $\{u_j\}$, we obtain the $C^{1,1}$ regularity as follows.
\begin{theorem}
%Let $\alpha\in(0,1)$ be a constant, and assume $B_1(0)\times\{0\}$ is a minimal hypersurface in $(B_1(0)\times(-1,1),g)$.
Fix a constant $\alpha\in(0,1)$. Assume that
\begin{equation}\label{eq:condition M}
    \text{$B_1(0)\times\{0\}$ is a minimal hypersurface in $(B_1(0)\times(-1,1),g)$.}
\end{equation}
Let $\{u_j\}$ be ordered $\mc A^h$-stationary graphs over $B_1(0)$. Suppose that for all $j=1,\cdots,m$, $\mr{Graph}_{u_j}$ has $\kappa$-bounded first variation. Then $u_j\in C^{1,1}_{loc}(B_1(0))$, $j=1,\cdots,m$. Moreover, if 
\[
    \kappa\leq 1, \quad \|u_j\|_{C^{1,\alpha}(B_1(0))}\leq 1, \quad 
    \|h\|_{C^1(B_1(0)\times [-1,1])}\leq 1,
\]
then
\[
    \sum_{j=1}^m\|u_j\|_{C^{1,1}(B_{1/2}(0))}
    \leq C\sum_{j=1}^m(\|u_j\|_{C^{1,\alpha}(B_1(0))}
    +\|h\|_{C^1(B_1(0)\times [-1,1])}+\kappa),
 \]
 where $C$ depends only on $n$, $m$, $\alpha$ and the metric $g$.
\end{theorem}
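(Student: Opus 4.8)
The plan is to recognize the $\mc A^h$-stationarity of a collection of ordered graphs as a special instance of the PDE problem in Theorem \ref{thm:main thm}, and then to invoke that theorem. First I would set
\[
    \ms A(x,z,\bm p):=\partial_{\bm p}F(x,z,\bm p),\qquad
    \ms B(x,z,\bm p):=\partial_z F(x,z,\bm p),\qquad
    g_j(x):=(-1)^j h\big(x,u_j(x)\big),
\]
and let $\mc Q$ be the resulting divergence-form operator $\mc Qu=-\dv\,\ms A(x,u,Du)+\ms B(x,u,Du)$. Comparing the first-variation identity \eqref{eq:Ah stationarity} with the weak formulation of \eqref{eq:main pde}, the $\mc A^h$-stationarity of $\{u_j\}$ says exactly that $\{u_j\}$ is a weak solution of \eqref{eq:main pde} for this $\mc Q$ and these $g_j$. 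Since each $u_j\in C^{1,\alpha}$ and $h$ is smooth, $g_j\in C^1(B_1(0))$ with $\|g_j\|_{C^1}\le C(g)\|h\|_{C^1}\big(1+\|u_j\|_{C^{1,\alpha}}\big)$.

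Next I would verify the structural hypotheses of Theorem \ref{thm:main thm} with $\lambda,\Lambda$ and all implicit constants depending only on $g$. Smoothness of $g$ on the compact region $\oli U\times[-1,1]$ makes $F$, hence $\ms A$ and $\ms B$, smooth, which gives \eqref{eq:ms AB norm}; the ellipticity \eqref{eq:Lambda and lambda} is the uniform positive-definiteness of the Hessian $\partial^2_{\bm p}F$ in the gradient variable, valid with constants depending on $g$ on the bounded set $\bm U_2$. For \eqref{eq:ms AB at x00} I would work in Fermi coordinates along $U\times\{0\}$ — in which $g$ has no mixed $\mr dz\,\mr dx^i$ terms — so that $\ms A(x,0,0)=\partial_{\bm p}F(x,0,0)=0$ automatically, while $\ms B(x,0,0)=\partial_z F(x,0,0)=0$ is precisely the vanishing of the mean curvature of $U\times\{0\}$, i.e. assumption \eqref{eq:condition M}. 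A little routine bookkeeping belongs here as well: extend $\ms A,\ms B$ smoothly past the range of the $u_j$ in the $z$-variable and, after restricting to a ball and rescaling, arrange that the operator is defined for $x\in B_2(0)$, so that the hypotheses hold on all of $\bm U_2$.

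Then I would check conditions \eqref{item:subsystem} and \eqref{item:kappa condition}, both of which are already spelled out in Section \ref{SS:background from isotopy minimizing problem}. The subsystem condition is the localization of $\mc A^h$-stationarity: if $u_{m_1},\dots,u_{m_2}$ are strictly separated from the other graphs on an open $U'$, one may take the ambient variations supported away from those graphs, and stationarity then gives \eqref{eq:Ah stationarity}, equivalently $\sum_{j=m_1}^{m_2}(\mc Qu_j+g_j)=0$, on $U'$. The $\kappa$-condition is the first-variation translation of the hypothesis that each $\mr{Graph}_{u_j}$ has $\kappa$-bounded first variation: by the first variation formula of the area functional, $\mc Qu_j=-\dv\,\partial_{\bm p}F(x,u_j,Du_j)+\partial_z F(x,u_j,Du_j)$ is bounded by $\kappa$ (up to a factor depending on $g$) in the weak sense.

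With all hypotheses in place, Theorem \ref{thm:main thm} yields $u_j\in C^{1,1}_{loc}(B_1(0))$, and under the stated normalization the bound \eqref{eq:C11 bound}; replacing $\lambda,\Lambda$ by constants depending only on $g$ and bounding $\|g_j\|_{C^1}\le C(g)\big(\|u_j\|_{C^{1,\alpha}}+\|h\|_{C^1}\big)$ (using $\|u_j\|_{C^{1,\alpha}}\|h\|_{C^1}\le\tfrac12\big(\|u_j\|_{C^{1,\alpha}}+\|h\|_{C^1}\big)$ under the normalization) gives the asserted estimate with $C=C(n,m,\alpha,g)$. Since all the analytic content sits in Theorem \ref{thm:main thm}, there is no serious obstacle in this corollary; the only point requiring genuine care is putting the geometric data into the normalized form of Theorem \ref{thm:main thm} — in particular the vanishing condition \eqref{eq:ms AB at x00}, which is what forces the passage to coordinates adapted to the minimal hypersurface $U\times\{0\}$.
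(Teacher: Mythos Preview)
Your proposal is correct and matches the paper's approach: the paper likewise derives this theorem simply by applying Theorem~\ref{thm:main thm} after recognizing $\ms A=\partial_{\bm p}F$, $\ms B=\partial_z F$, $g_j=(-1)^j h(x,u_j)$ and verifying the subsystem and $\kappa$-conditions from the discussion in Section~\ref{SS:background from isotopy minimizing problem}. If anything, you are more explicit than the paper about the coordinate choice needed to secure the first half of \eqref{eq:ms AB at x00}.
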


\begin{remark}
Consider $\Sigma$ induced by ordered $C^{1,\alpha}$-graphs over $U\subset \mb R^n$ such that each sheet has bounded first variation. Then by choosing a small enough neighborhood centered at any $p\in \Sigma$, and up to a change of coordinates therein, we can always assume \eqref{eq:condition M} is satisfied. Indeed, assuming without loss of generality $p\in U$, by taking sufficiently small $\epsilon$, $B^n_\epsilon(p)\times [-\epsilon,\epsilon]$ admits a minimal foliation $\{\Gamma_t\}$ with $\partial\Gamma_t$ given by $\partial B^n_\epsilon(p)\times\{t\}$; see \cite{Whi87}*{Appendix}. In particular, there exists a minimal slice $\Gamma$ containing $p$. By taking sufficiently small $\epsilon$, $\Gamma$ is sufficiently close to $B^n_\epsilon(p)\subset\mb R^n$. Thus $\Sigma$ can be written as ordered graphs over $\Gamma$ with uniformly bounded $C^{1,\alpha}$-norm. Then the above theorem can be applied by rescaling $\epsilon$ to $1$.
%By taking sufficiently small $\epsilon$, $T_p\Gamma$ is close to $\mb R^n$. Thus $\Sigma$ can be written as ordered graphs over $\Gamma$. Since each sheet of $\Sigma$ has $\kappa$-bounded first variation, then by Allard's regularity, in a small neighborhood of $p$, the graph functions of $\Sigma$ over $\Gamma$ will have uniformly bounded $C^{1,\alpha}$ norm.
\end{remark}

%%%%%%%%%%%%%%%%%%%%%%%%%%%%%%%%%%%%%%%%%%%%%%%%%%%%%%%%
\subsection{Idea of the proof}

Observe that by the $\kappa$-condition, for each $j=1,\cdots,m$, $\mc Qu_j\in L^{\infty}$. Then one can consider the equation of $u_i-u_j$ and apply the weak Harnack inequality in \cite{GT}*{Theorem 8.17 and Theorem 8.18} to give the bound of $u_i-u_j$ around the touching set. As a result, the difference of $u_i$ and $u_j$ is bounded by $C\kappa r^2$, where $r$ is the distance to the touching set. Such a result together with the subsystem condition will imply the upper bound of the first derivative of $u_i-u_j$. These two inequalities will be crucial in the remaining of this paper. 

The next step is to prove that $u_j$ is $W^{2,2}$. To do this, we will use an inductive method to prove that the $L^2$-integrals of the corresponding difference quotients are uniformly bounded. Suppose that there are $m$ sheets. Then for any points with density less than $m$, one can find a ball so that $\{u_j\}$ is not connected. Then by the inductive process, one can bound the local $L^2$-integral of the difference quotients; see \eqref{eq:hat r inequality at low density}. On the other hand, for the points with density $m$, the first derivative upper bound in the previous paragraph will also give such a small ball with desired bound for the local $L^2$-integral of the difference quotients; see \eqref{eq:hat r inequality at touching}. Then by a covering argument, we will finish the induction and obtain the desired uniform $L^2$ bound.

After that, we will consider the equation of $D_k\varphi$, where $\varphi$ is the average of $u_j$. Thanks to the $C^1$ estimates of $u_i-u_j$, we can adapt the H\"older estimates in \cite{Han-Lin-PDE}*{Theorem 3.8} to prove that the $L^2$-integral of $D_k\varphi$ over $B_r(y)$ has an order $n-\epsilon$ for any $\epsilon\in (0,1)$; see Lemma \ref{lem:n-epsilon} and Proposition \ref{lem:n-epsilon order for uj}. Then the argument of H\"older estimates for gradients (see \cite{Han-Lin-PDE}*{Theorem 3.13}) will be adapted to improve the order estimates of 
\[
    \int_{B_r(y)}|D^2\varphi-(D^2\varphi)_{B_r(y)}|^2\,\mr dx.
\] 
Note that the above two estimates are only valid for the radius $r$ so that $\{u_j\}$ is connected in $B_r(y)$ since we have to use the $C^1$ estimates of $u_i-u_j$. Nevertheless, such an order estimate will be applied to give two delicate estimates (see Lemma \ref{lem:bdd average diff} and Lemma \ref{lem:improved order of D^2u}), which yield a uniform upper bound for 
\[   
   \sum_{j=1}^m\frac{1}{|B_r|}\int_{B_r(y)}|D^2u_j|^2\,\mr dx.
\]
This together with the Poincar\'e inequalities implies the growth estimates of $u_j$ over a tilt-plane. Such a bound enables us to apply the $C^{1,\alpha}$ estimates \cite{GT}*{Theorem 8.32} to bound the derivative of $u_j$ over the tilt-plane, i.e. we obtain the bound of $|Du_j-(Du_j)_{B_r(y)}|$ for all $x\in B_r(y)$. This gives the desired upper bound of $|Du_j(x)-Du_j(y)|$ by taking $r=|x-y|$. Then the main theorem is proved.

%%%%%%%%%%%%%%%%%%%%%%%%%%%%%%%%%%%%%%%%%%%%%%%%%%%%%%
\subsection{Outline}
This paper is organized as follows. 
In Section \ref{sec:preliminary}, we introduce some notions and basic inequalities for the coefficients of the equations. Then we will list some variants of elliptic PDEs that will be used in this paper. 
In Section \ref{sec:height difference}, we give the $C^1$ norm of the difference of any two sheets. As a consequence, the $W^{2,2}$ norm of each $u_j$ is also proved.
Section \ref{sec:order of hessian} is devoted to prove the order estimates of the hessians of $u_j$, which imply that $D^2u_j$ is bounded in the sense of average.
Finally, we prove the desired Lipschitz upper bound for $Du_j$ in Section \ref{sec:lipschitz}.

\subsection*{Acknowledgement} Z. W. would like to thank Professor Jingyi Chen and Professor Ailana Fraser for their support and encouragement. X. Z. is supported by NSF grant DMS-1945178, an Alfred P. Sloan Research Fellowship, and a grant from the Simons Foundation (1026523, XZ).

%%%%%%%%%%%%%%%%%%%%%%%%%%%%%%%%%%%%%%%%%%%%%%%%%%%%%
%Preliminary
%%%%%%%%%%%%%%%%%%%%%%%%%%%%%%%%%%%%%%%%%%%%%%%%%%%%%

\section{Preliminary}\label{sec:preliminary}

In this section, we will derive a number of differential equations to be used later and provide some basic estimates for the coefficients.

%%%%%%%%%%%%%%%%%%%%%%%%%%%%%%%%%%%%%%%%%%%%%%%%%%%%%
\subsection{Some estimates of the coefficients}

We always assume that there exist $\alpha\in (0,1)$ and $\delta_0,\kappa_0,\kappa_1\in (0,1]$,
so that for all $i,j=1,\cdots,m$, the ordered graphs $\{u_j\}$ satisfy:
\begin{gather}
    \label{eq:small C1alpha}
        \sup_{x\in U} |Du_j(x)| +\sup_{x,y\in U} \frac{|D u_j(x)-D u_j(y)|}{|x-y|^\alpha}
        \leq \delta_0; \\
    \label{eq:eta kappa0 and kappa1}
         |g_j|\leq \kappa_0 ,\quad |Dg_j|\leq \kappa_1.
\end{gather}
For simplicity, we denote 
\[  \bm\kappa^*:=\kappa^2+\kappa_0^2+\kappa_1^2+\delta_0^2.\]

\begin{definition}\label{def:Q operator}
Let 
$\ms A(x,z,\bm p)\in C^3(\mb R^n\times \mb R\times \mb R^n,\mb R^n), 
\ms B(x,z,\bm p)\in C^2(\mb R^n\times \mb R\times \mb R^n)$ and 
$\mc Qu:=-\dv\, \ms A(x,u,Du)+\ms B(x,u,Du)$. 
For a family of $C^1$ functions $\{g_j\}_{j=1}^m$, we say that $C^{1,\alpha}$ ordered graphs $\{u_j\}_{j=1}^m$ over $U\subset \mb R^n$ form a solution to $(\mc Q,\{g_j\})$ if $\{u_j\}$ satisfy \eqref{eq:main pde} in $U$ in the weak sense, the subsystem condition \eqref{item:subsystem}, and the $\kappa$-condition \eqref{item:kappa condition}.
\end{definition}

The following will be used in various blow-up arguments. 
%as a blow-up argument in this paper.
\begin{lemma}\label{lem:blow up}
Suppose that the ordered collection $\{u_j\}$ is a solution to $(\mc Q,\{g_j\})$ in $U$. Let $y\in \mb R^n$, $r\in (0,1]$ and define $\{v_j\}$ by 
\[     
    v_j(x)= r^{-1}u_j(y+rx).\]
Define a new operator $\wti {\mc Q}$ by 
\[  
    \wti {\mc Q}v=-\dv \wti {\ms A}(x,v,Dv)+\wti{\ms B}(x,v,Dv),\]
where 
\[  
    \wti {\ms A}(x,z,\bm p):=\ms A(y+rx,rz,\bm p),\quad
    \wti {\ms B}(x,z, \bm p):=r\ms B(y+rx,rz,\bm p).
\]
Define the new functions $\{\wti g_j\}$ by
\[ 
    \wti g_j(x)=rg_j(y+rx).\] 
Then $\{v_j\}$ is a solution to $(\wti{\mc Q},\{\wti g_j\})$ in 
$\wti U:=\{r^{-1}(x-y);x\in U\}$.
\end{lemma}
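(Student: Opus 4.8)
The plan is to verify that rescaling preserves the three structural properties in Definition~\ref{def:Q operator} — being a weak solution to \eqref{eq:main pde}, the subsystem condition, and the $\kappa$-condition — by a direct change of variables, together with checking that the rescaled coefficients $\wti{\ms A},\wti{\ms B}$ still satisfy the normalizing hypotheses \eqref{eq:Lambda and lambda}, \eqref{eq:ms AB norm}, \eqref{eq:ms AB at x00} (implicit in the word ``operator'' and needed for later applications). First I would record the basic relations: for $v_j(x)=r^{-1}u_j(y+rx)$ one has $Dv_j(x) = Du_j(y+rx)$, so that $\wti{\ms A}(x,v_j,Dv_j) = \ms A(y+rx,\,u_j(y+rx),\,Du_j(y+rx))$ and similarly $\wti{\ms B}(x,v_j,Dv_j) = r\,\ms B(y+rx,u_j(y+rx),Du_j(y+rx))$. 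Also $\{v_j\}$ is still ordered since the map $u\mapsto r^{-1}u$ is monotone and translation/dilation of the domain preserves order.

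The main computation is the test-function substitution. Given $\phi\in C_c^1(\wti U)$, set $\psi(x') := \phi\big(r^{-1}(x'-y)\big)\in C_c^1(U)$, so $D\psi(x') = r^{-1}(D\phi)\big(r^{-1}(x'-y)\big)$. Changing variables $x' = y+rx$, with $\mr dx' = r^n\,\mr dx$, in the weak formulation for $\{u_j\}$ gives, for the divergence term, $\int_U \ms A(x',u_j,Du_j)\,D\psi\,\mr dx' = r^{n-1}\int_{\wti U}\wti{\ms A}(x,v_j,Dv_j)\,D\phi\,\mr dx$, and for the zeroth-order and $g_j$ terms, $\int_U \big(\ms B(x',u_j,Du_j) + g_j(x')\big)\psi\,\mr dx' = r^{n-1}\int_{\wti U}\big(\wti{\ms B}(x,v_j,Dv_j) + \wti g_j(x)\big)\phi\,\mr dx$, where the extra factor $r$ that upgrades $\ms B$ to $\wti{\ms B}$ and $g_j$ to $\wti g_j$ is exactly the one needed to match the $r^{n-1}$ (rather than $r^n$) from the divergence term. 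Summing over $j$ and dividing by $r^{n-1}$ turns \eqref{eq:main pde} for $\{u_j\}$ into \eqref{eq:main pde} for $\{v_j\}$ with $(\wti{\mc Q},\{\wti g_j\})$. The identical substitution, restricted to an open $U'\subset U$ with $\wti U' := \{r^{-1}(x-y):x\in U'\}$ and a partial sum $\sum_{j=m_1}^{m_2}$, gives the subsystem condition, noting that the strict orderings $u_{m_1-1}<u_{m_1}$ and $u_{m_2}<u_{m_2+1}$ on $U'$ are equivalent to $v_{m_1-1}<v_{m_1}$ and $v_{m_2}<v_{m_2+1}$ on $\wti U'$. For the $\kappa$-condition, the same change of variables applied to a nonnegative $\phi\in C_c^1(\wti U)$ shows $\int_{\wti U}\wti{\ms A}(x,v_j,Dv_j)D\phi + \wti{\ms B}(x,v_j,Dv_j)\phi\,\mr dx = r^{1-n}\int_U \ms A(x',u_j,Du_j)D\psi + \ms B(x',u_j,Du_j)\psi\,\mr dx'$, which is bounded in absolute value by $r^{1-n}\kappa\int_U\psi\,\mr dx' = r\kappa\int_{\wti U}\phi\,\mr dx \le \kappa\int_{\wti U}\phi\,\mr dx$ since $r\le 1$; so $\{v_j\}$ satisfies the $\kappa$-condition (in fact with the same or a smaller constant).

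Finally I would check the coefficient bounds for $\wti{\mc Q}$. Since $r\le 1$, for $(x,z,\bm p)$ with $|x|\le 2$, $|z|\le 2$ we have $(y+rx, rz, \bm p)$ lying in a set on which the hypotheses on $\ms A,\ms B$ hold provided $y$ is within a bounded distance of the relevant ball — in the uses of this lemma $y$ ranges over the domain of definition, so this is automatic; $\partial_{\bm p}\wti{\ms A}(x,z,\bm p) = (\partial_{\bm p}\ms A)(y+rx,rz,\bm p)$ inherits the ellipticity constants $\lambda,\Lambda$ verbatim, and $\|\wti{\ms A}\|_{C^3} + \|\wti{\ms B}\|_{C^2}\le \Lambda$ because differentiating in $x$ or $z$ brings down factors of $r$ or $r^2$ (all $\le 1$) and $\wti{\ms B}$ carries an extra prefactor $r\le 1$. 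The conditions $\wti{\ms A}(x,0,0) = \ms A(y+rx,0,0) = 0$ and $\wti{\ms B}(x,0,0) = r\ms B(y+rx,0,0) = 0$ follow from \eqref{eq:ms AB at x00}. The only mildly delicate point — and the one I would state carefully — is the bookkeeping of the power of $r$: it is the mismatch between $r^n$ from the volume element and $r^{n-1}$ from $D\psi$ carrying a $r^{-1}$ that forces the definitions $\wti{\ms B} = r\ms B$, $\wti g_j = r g_j$, and one should double-check signs and that no stray factor survives after dividing through. Everything else is routine change of variables.
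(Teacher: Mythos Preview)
Your proof is correct; the paper omits the proof of this lemma entirely, treating it as a routine change of variables, and your verification is exactly the expected one. Your observation that $\{v_j\}$ actually satisfies the $r\kappa$-condition (hence the $\kappa$-condition since $r\le 1$) is also recorded separately in the paper's Remark~\ref{rmk:blow up AB bound}.
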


\begin{remark}\label{rmk:blow up AB bound}
We now describe the changes of $\delta_0,\kappa_0,\kappa_1$ and $\kappa$ in the blow-up process.
Suppose that in Lemma \ref{lem:blow up}, if \eqref{eq:small C1alpha} and \eqref{eq:eta kappa0 and kappa1} hold for 
$\delta_0,\kappa_0,\kappa_1$. 
Then we have that 
\begin{gather*}
     \sup_{x\in \wti U} |Dv_j(x)|\leq \delta_0, \quad  
     \sup_{x,x'\in\wti U} \frac{|D v_j(x)-D v_j(x')|}{|x-x'|^\alpha}
        \leq r^\alpha\delta_0; \\
    |\wti g_j|    \leq r\kappa_0 ,\quad 
    |D\wti g_j|   \leq r^2\kappa_1.
\end{gather*}
Moreover, if $\{u_j\}$ satisfy $\kappa$-condition, then $\{v_j\}$ satisfy $r\kappa$-condition. Without loss of generality, we always assume that $0< \kappa\leq 1$.
\end{remark}

We now introduce the concept of connected components. Note that each connected component is a solution to $(\mc Q,\{g_j'\})$ for the associated $\{g_j'\}\subset \{g_j\}$ by the subsystem condition.
\begin{definition}[Connected component for ordered graphs]
Let $\{u_j\}_{j=1}^m$ be ordered graphs over $U$. 
We say they are connected in $U$ if and only if for each $i=1,\cdots ,m-1$, there exists $x_i\in U$ such that $u_i(x_i)=u_{i+1}(x_i)$.   

Any ordered graphs $\{u_j\}$ over $U$ can divided into subcollections 
$\{u_1,\cdots u_{i_1}\}$, $\{u_{i_1+1},\cdots,u_{i_2}\}$, $\cdots$, 
$\{u_{i_k},\cdots,u_{m}\}$ 
so that each collection is connected and the collections are pairwise disjoint; each collection is called a {\em connected component}.
\end{definition}

\begin{lemma}\label{lem:large gap}
Let $\{u_j\}_{j=1}^m$ be ordered graphs over $B_1(0)$. 
Given $t\in(0,1)$, then there exists $r\in [t^m,1]$ so that $\{u_j\}$ has the same number of connected components in $B_r(0)$ and $B_{tr}(0)$.
\end{lemma}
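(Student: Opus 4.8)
\textbf{Proof proposal for Lemma \ref{lem:large gap}.}

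The plan is to exploit a pigeonhole argument on the scales $t^k$, $k=0,1,\dots,m$. For each radius $s\in(0,1]$, let $N(s)$ denote the number of connected components of $\{u_j\}$ in $B_s(0)$. Since the collection has exactly $m$ graphs, $N(s)$ is an integer with $1\leq N(s)\leq m$. The key monotonicity observation is that $N$ is nonincreasing under shrinking the ball: if two consecutive sheets $u_i,u_{i+1}$ touch at some point of $B_{tr}(0)$, then they certainly touch at a point of $B_r(0)$, so passing from $B_r(0)$ to the smaller ball $B_{tr}(0)$ can only break connections, never create them. Hence $N(t^0)\geq N(t^1)\geq \cdots \geq N(t^m)$, a nonincreasing sequence of $m+1$ integers each lying in $\{1,\dots,m\}$.

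First I would record this monotonicity carefully, noting that $\{u_j\}$ restricted to a smaller ball is still a collection of ordered graphs, and the definition of connected component depends only on the existence of touching points of consecutive sheets inside the ball. Then, since a nonincreasing sequence of $m+1$ integers confined to the $m$ values $\{1,\dots,m\}$ cannot be strictly decreasing at every step, there must exist an index $k\in\{0,1,\dots,m-1\}$ with $N(t^k)=N(t^{k+1})$. Set $r:=t^k$; then $r\in[t^m,1]$ because $0\le k\le m-1$ gives $t^{m}\le t^{k}=r\le t^0=1$ (using $t\in(0,1)$), and $tr=t^{k+1}$, so $\{u_j\}$ has the same number of connected components in $B_r(0)$ and $B_{tr}(0)$, as claimed.

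The argument is essentially a one-line pigeonhole once the monotonicity of $N$ is established, so I do not expect any genuine obstacle; the only point requiring a small amount of care is the verification that the number of connected components is monotone under restriction to a subball, which follows immediately from the definition since a touching point $x_i$ with $u_i(x_i)=u_{i+1}(x_i)$ lying in $B_{tr}(0)$ also lies in $B_r(0)$. One should also make sure the endpoints of the range are stated correctly: $r$ can equal $1$ (if $N$ already drops between scales $t^0$ and $t^1$, that is $k=0$) and $r$ can be as small as $t^m$ (if $N$ is strictly decreasing on the first $m$ steps, forcing $k=m-1$), which matches the interval $[t^m,1]$ in the statement.
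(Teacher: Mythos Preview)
Your proposal is correct and follows essentially the same pigeonhole argument as the paper's proof. One small slip: your own justification (touching in $B_{tr}(0)$ implies touching in $B_r(0)$, so shrinking can only \emph{break} connections) actually shows that $N$ is \emph{nondecreasing} under shrinking, i.e.\ $N(t^0)\le N(t^1)\le\cdots\le N(t^m)$, the reverse of the chain you wrote; the pigeonhole conclusion is unaffected since all that is needed is that the sequence is monotone.
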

\begin{proof}
Note that the number of connected components of the system $\{u_j\}$ in $B_r(0)$ is decreasing when $r$ increases. 
Let $N(k)$ be the number of connected components of $\{u_j\}$ in $B_{r_k}(0)$ for $r_k:=t^{k}$. Then the desired result follows from the fact that 
\[ 
    1\leq N(1)\leq N(2)\leq\cdots \leq N(k)\leq m.\]
\end{proof}

Let $u, v$ be two $C^1$-functions on $U$. 
When taking the difference of derivatives of $\ms A$ and $\ms B$, we will estimate the remainder in terms of the following notations. %\textcolor{red}{(Maybe use $\wti{\bm A}(x,u, v)$, $\wti{\bm b}(x,u, v)$, $\wti{d}(x,u, v)$?)}
\begin{gather*} 
    \bm A(u,v)  =  \int_0^1\partial_{\bm p} \ms A(x,tu+(1-t)v,tDu+(1-t)Dv)\,\mr dt;\\
    \bm b(u,v)  =  \int_0^1\partial_z \ms A(x,tu+(1-t)v,tDu+(1-t)Dv)\,\mr dt;\\
    \bm c(u,v)  =  \int_0^1\partial_{\bm p} \ms B(x,tu+(1-t)v,tDu+(1-t)Dv)\,\mr dt;\\
    d(u,v)      =  \int_0^1\partial_z\ms B(x,tu+(1-t)v,tDu+(1-t)Dv)\,\mr dt.
\end{gather*}
Note that by definition, we have
\begin{gather*}
    \bm A(u,u)  =  \partial_{\bm p}\ms A(x,u,Du), \quad \bm b(u,u)= \partial _z\ms A\ms b(x,u,Du);\\
    \bm c(u,u)  =  \partial_{\bm p}\ms B(x,u,Du),\quad d(u,u)= \partial_z\ms B(x,u,Du).
\end{gather*}

\begin{lemma}\label{lem:taking difference}
The following formulas follow from a direct computation. %Let $u_t=tu+(1-t)v$.
\begin{gather*}
 \ms A\big(x, u, D u\big)  -\ms A\big(x, v, D v\big)= \bm A(u,v)\cdot D (u-v)+ \bm b(u,v)(u-v) ; \\
 \ms B\big(x, u, D u\big)  - \ms B\big(x, v, D v\big)= \bm c(u,v)\cdot D (u-v) + d(u,v) (u-v).
\end{gather*}

\end{lemma}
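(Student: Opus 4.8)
\textbf{Proof plan for Lemma \ref{lem:taking difference}.}

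The plan is to reduce both identities to the fundamental theorem of calculus applied along the straight-line path $t\mapsto (x, tu+(1-t)v, tDu+(1-t)Dv)$ in the $(z,\bm p)$-variables, with $x$ held fixed. First I would fix $x\in U$ and define, for the $\ms A$-identity, the auxiliary function $\phi(t):=\ms A\big(x,\, tu(x)+(1-t)v(x),\, tDu(x)+(1-t)Dv(x)\big)$ for $t\in[0,1]$, so that $\phi(1)=\ms A(x,u,Du)$ and $\phi(0)=\ms A(x,v,Dv)$. Since $\ms A\in C^3$ (more than enough; $C^1$ suffices here) and the path is smooth in $t$, the chain rule gives
\[
    \phi'(t) = \partial_z\ms A\big(x, tu+(1-t)v, tDu+(1-t)Dv\big)\,(u-v)
        + \partial_{\bm p}\ms A\big(x, tu+(1-t)v, tDu+(1-t)Dv\big)\cdot D(u-v).
\]
Integrating from $0$ to $1$ and using $\ms A(x,u,Du)-\ms A(x,v,Dv)=\phi(1)-\phi(0)=\int_0^1\phi'(t)\,\mr dt$, then pulling the factors $(u-v)$ and $D(u-v)$ (which are independent of $t$) outside the integral, yields exactly
\[
    \ms A(x,u,Du)-\ms A(x,v,Dv) = \bm A(u,v)\cdot D(u-v) + \bm b(u,v)(u-v),
\]
by the definitions of $\bm A(u,v)$ and $\bm b(u,v)$ given above. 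The $\ms B$-identity is proved verbatim the same way, replacing $\ms A$ by $\ms B$, using $\partial_{\bm p}\ms B$ and $\partial_z\ms B$, and recognizing $\bm c(u,v)$ and $d(u,v)$ in the resulting integrals.

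There is essentially no obstacle here: the only points to be careful about are (i) that the notation $\bm A(u,v)\cdot D(u-v)$ means the matrix $\bm A(u,v)$ (an $n\times n$ matrix, since $\partial_{\bm p}\ms A$ is matrix-valued) acting on the vector $D(u-v)$, which is consistent with how the integrand is written; (ii) that $u-v$ and $D(u-v)$ are evaluated at the same point $x$ as everything else and are constant in the integration variable $t$, so they factor out of $\int_0^1\cdots\mr dt$; and (iii) that the regularity assumptions $u,v\in C^1(U)$ and $\ms A,\ms B\in C^1$ are exactly what is needed to differentiate $\phi$ in $t$ and apply the fundamental theorem of calculus pointwise in $x$. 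The consistency checks $\bm A(u,u)=\partial_{\bm p}\ms A(x,u,Du)$, etc., recorded before the lemma, are an immediate sanity check: setting $v=u$ makes the path constant, the $t$-integrals collapse, and both identities reduce to $0=0$.
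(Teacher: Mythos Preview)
Your proof is correct and is precisely the ``direct computation'' the paper alludes to in stating the lemma without further argument: it is the fundamental theorem of calculus applied to $t\mapsto \ms A(x,tu+(1-t)v,tDu+(1-t)Dv)$ (and similarly for $\ms B$), with the chain rule producing exactly the integrands defining $\bm A,\bm b,\bm c,d$.
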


The following estimates will be used later.
%in this paper.
\begin{lemma}\label{lem:Abcd diff bound}
Let $u_1,u_2,v_1,v_2$ be $C^1$ functions defined on $U$. Then 
\begin{equation}\label{eq:Abcd diff bound} 
    \begin{split}
        |\bm A(u_1,u_2)-\bm A(v_1,v_2)|+|\bm b(u_1,u_2)-\bm b(v_1,v_2)|&\leq \|\ms A\|_{C^2}(\|u_1-v_1\|_{C^1}+\|u_2-v_2\|_{C^1});\\
        |\bm c(u_1,u_2)-\bm c(v_1,v_2)|+|d(u_1,u_2)-d(v_1,v_2)|&\leq \|\ms B\|_{C^2}(\|u_1-v_1\|_{C^1}+\|u_2-v_2\|_{C^1}).
    \end{split}
\end{equation}
\end{lemma}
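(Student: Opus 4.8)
\textbf{Proof proposal for Lemma \ref{lem:Abcd diff bound}.}

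The plan is to prove the estimate for $\bm A$ first and then observe that the remaining three quantities follow by exactly the same argument with $\ms A$ replaced by $\ms B$ (and with one fewer derivative in the $\bm p$-variable for $\bm b$, $d$). Recall that by definition
\[
    \bm A(u_1,u_2) = \int_0^1 \partial_{\bm p}\ms A\big(x,\, tu_1+(1-t)u_2,\, tDu_1+(1-t)Du_2\big)\,\mr dt,
\]
and similarly for $\bm A(v_1,v_2)$ with $v_1,v_2$ in place of $u_1,u_2$. First I would write the difference of the integrands pointwise in $t$ and $x$, set $w(t) = tu_1+(1-t)u_2$, $\bar w(t) = tv_1+(1-t)v_2$, and estimate
\[
    \big| \partial_{\bm p}\ms A(x, w(t), Dw(t)) - \partial_{\bm p}\ms A(x, \bar w(t), D\bar w(t)) \big|
\]
by the mean value theorem applied to the $C^2$ function $\partial_{\bm p}\ms A$ in its $(z,\bm p)$ arguments: the bound is $\|\partial_{\bm p}\ms A\|_{C^1} \big( |w(t)-\bar w(t)| + |Dw(t) - D\bar w(t)| \big) \leq \|\ms A\|_{C^2}\big(|w(t)-\bar w(t)| + |Dw(t)-D\bar w(t)|\big)$.

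Next I would bound the linear-interpolation terms uniformly in $t\in[0,1]$: since $w(t) - \bar w(t) = t(u_1-v_1) + (1-t)(u_2-v_2)$, the triangle inequality gives $|w(t)-\bar w(t)| \leq \|u_1-v_1\|_{C^0} + \|u_2-v_2\|_{C^0}$, and likewise $|Dw(t)-D\bar w(t)| \leq \|D(u_1-v_1)\|_{C^0} + \|D(u_2-v_2)\|_{C^0}$. Adding these and absorbing into the $C^1$-norms yields $|w(t)-\bar w(t)| + |Dw(t)-D\bar w(t)| \leq \|u_1-v_1\|_{C^1} + \|u_2-v_2\|_{C^1}$. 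Integrating the pointwise estimate over $t\in[0,1]$ (the integral of a constant bound) then gives
\[
    |\bm A(u_1,u_2) - \bm A(v_1,v_2)| \leq \|\ms A\|_{C^2}\big(\|u_1-v_1\|_{C^1} + \|u_2-v_2\|_{C^1}\big).
\]
The identical argument with $\partial_z\ms A$ (which is $C^1$ and controlled by $\|\ms A\|_{C^2}$) in place of $\partial_{\bm p}\ms A$ handles $\bm b$, and with $\partial_{\bm p}\ms B$, $\partial_z\ms B$ in place (controlled by $\|\ms B\|_{C^2}$) handles $\bm c$ and $d$. Summing the $\bm A$ and $\bm b$ estimates gives the first line of \eqref{eq:Abcd diff bound}, and summing the $\bm c$ and $d$ estimates gives the second.

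There is essentially no obstacle here: the statement is a routine consequence of the fundamental theorem of calculus / mean value theorem together with the convexity of line segments, and the only mild care needed is to keep track of which $C^k$-norm of $\ms A$ and $\ms B$ actually appears — one differentiation to form $\partial_{\bm p}\ms A$, $\partial_z\ms A$, and one more for the mean value step, so $\|\ms A\|_{C^2}$ and $\|\ms B\|_{C^2}$ suffice, consistently with the hypothesis $\ms A\in C^3$, $\ms B\in C^2$ (the $C^3$ regularity of $\ms A$ is needed elsewhere, not here). One should also note that all norms are taken over whatever domain $U$ the functions are defined on, and that the estimate requires no smallness or structural assumption beyond $\ms A,\ms B$ being of the stated regularity class.
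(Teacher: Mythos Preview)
Your proposal is correct and is precisely the routine argument the paper has in mind; in fact the paper states the lemma without proof, treating it as an immediate consequence of the mean value theorem applied to the integrands defining $\bm A,\bm b,\bm c,d$. The only minor point is that summing the separate bounds for $\bm A$ and $\bm b$ would literally produce a harmless factor of $2$ in front of $\|\ms A\|_{C^2}$ (and similarly for $\ms B$), but the paper does not track such constants and this has no bearing on any later use of the lemma.
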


%%%%%%%%%%%%%%%%%%%%%%%%%%%%%%%%%%%%%%%%%%%%%%%%%%%%
\subsection{The variants of differential equations}
Since a solution to $(\mc Q,\{g_j\})$ is in the form of $\{u_j\}$, we can not apply the theory for elliptic PDEs. In this section, we will derive some PDEs satisfied by a single function which can be the average, the derivative, or the difference quotient, etc.

\medskip
{\noindent \bf The equation of the average.} Suppose that $\{u_j\}$ is a solution to $(\mc Q,\{g_j\})$; see Definition \ref{def:Q operator}.
Let 
\[   \varphi:=\frac{1}{m}\sum_{j=1}^mu_j. \]%Suppose that $(\Sigma,\Omega)$ is $\mc A^h$-stationary. 
By Lemma \ref{lem:taking difference}, 
\begin{align*}
    \mc Qu_j-\mc Q\varphi
        &=-\dv\Big(\bm A(u_j,\varphi)D(u_j-\varphi)  +  \bm b(u_j,\varphi)(u_j-\varphi)\Big)+\\
        &\hspace{8em}+\bm c(u_j,\varphi) D(u_j-\varphi)  + d(u_j,\varphi)(u_j-\varphi).
\end{align*}
Observe that 
\[ m\mc Q\varphi=\sum_{j=1}^m(\mc Q\varphi-\mc Qu_j-g_j). \]
Then $\varphi$ satisfies the following equation:
\begin{equation}\label{eq:sum form}
    m\mc Q\varphi  =  \dv \bm f(\{u_j\})-g(\{u_j\},\{g_j\}).
	 \end{equation}
Here 
\begin{gather}
    \label{eq:def of bm f}
        \bm f(\{u_j\})
        :=\sum_{j=1}^m\Big[(\bm A(u_j,\varphi)-\bm A(\varphi,\varphi))(Du_j-D\varphi)  +  \big(\bm b(u_j,\varphi)-\bm b (\varphi,\varphi)\big)(u_j-\varphi) \Big];\\
    \label{eq:def of g}
        g(\{u_j\},\{g_j\})
        :=\sum_{j=1}^m\Big[\big(\bm c(u_j,\varphi)-\bm c (\varphi,\varphi)\big) (Du_j-D\varphi)  +  (d(u_j,\varphi)-d(\varphi,\varphi)(u_j-\varphi)-g_j)\Big].
\end{gather}
Note that in \eqref{eq:def of bm f} and \eqref{eq:def of g}, the additional terms containing 
$\bm A(\varphi,\varphi)$, $\bm b (\varphi,\varphi)$,  $\bm c (\varphi,\varphi)$ and $d(\varphi,\varphi)$ sum to be zero, since 
$\sum_{j=1}^m(\varphi-u_j) = 0$.
\begin{lemma}\label{lem:bound bm f and g}
Let $\{u_j\}$ and $\varphi$ be as above. 
Let $\tau>0$ be a constant so that $\|u_i-u_j\|_{C^1(B_1(0))}\leq \tau$. Then 
\begin{gather*}
    |\bm f(\{u_j\})|\leq C(m,\Lambda)\tau^2   ,\quad 
    \sup_{x,y\in B_1(0)}\frac{|\bm f(\{u_j\})(x)-\bm f(\{u_j\})(y)|}{|x-y|^\alpha}  
        \leq C(m,\Lambda)\tau;\\
    |g(\{u_j\},\{g_j\})|\leq C(m,\Lambda)(\tau^2+\kappa_0).
 \end{gather*}
\end{lemma}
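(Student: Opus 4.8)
\textbf{Proof plan for Lemma \ref{lem:bound bm f and g}.}

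The plan is to estimate each summand in \eqref{eq:def of bm f} and \eqref{eq:def of g} by invoking Lemma \ref{lem:Abcd diff bound} to control the differences $\bm A(u_j,\varphi)-\bm A(\varphi,\varphi)$, etc., and by observing that the factors $(Du_j-D\varphi)$ and $(u_j-\varphi)$ are themselves controlled by $\tau$. First I would note that since $\varphi = \frac1m\sum_k u_k$ is an average of the $u_k$, we have $u_j - \varphi = \frac1m\sum_k (u_j - u_k)$, so the hypothesis $\|u_i-u_j\|_{C^1(B_1(0))}\leq \tau$ gives $\|u_j-\varphi\|_{C^1(B_1(0))}\leq \tau$ for every $j$ (the constant here is at worst $\frac{m-1}{m}\le 1$, absorbed into $C(m,\Lambda)$). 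In particular $|u_j - \varphi|\le \tau$ and $|Du_j - D\varphi|\le \tau$ pointwise on $B_1(0)$.

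Next, applying Lemma \ref{lem:Abcd diff bound} with $(u_1,u_2) = (u_j,\varphi)$ and $(v_1,v_2)=(\varphi,\varphi)$ yields
\[
  |\bm A(u_j,\varphi)-\bm A(\varphi,\varphi)| + |\bm b(u_j,\varphi)-\bm b(\varphi,\varphi)|
  \le \|\ms A\|_{C^2}\,\|u_j-\varphi\|_{C^1} \le \Lambda\tau,
\]
using \eqref{eq:ms AB norm}, and similarly $|\bm c(u_j,\varphi)-\bm c(\varphi,\varphi)| + |d(u_j,\varphi)-d(\varphi,\varphi)| \le \Lambda\tau$. Multiplying by the factor $|Du_j - D\varphi|\le\tau$ (resp. $|u_j-\varphi|\le\tau$) and summing over $j=1,\dots,m$ gives $|\bm f(\{u_j\})|\le C(m,\Lambda)\tau^2$, and for $g$ the same bound with the extra $-g_j$ term contributing $\sum_j|g_j|\le m\kappa_0$ by \eqref{eq:eta kappa0 and kappa1}, hence $|g(\{u_j\},\{g_j\})|\le C(m,\Lambda)(\tau^2+\kappa_0)$. (One should also record that the coefficient functions $\bm A(\varphi,\varphi)$, etc., are themselves bounded by $\Lambda$ on the relevant range, which is needed implicitly but follows from \eqref{eq:ms AB norm} once we know the $u_j$, and hence $\varphi$, take values in $\bm U_2$; this uses \eqref{eq:small C1alpha} to keep gradients small and the normalization $\|u_j\|_{C^{1,\alpha}}\le 1$.)

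For the Hölder seminorm of $\bm f$, I would write each summand as a product $P_j Q_j$ with $P_j := \bm A(u_j,\varphi)-\bm A(\varphi,\varphi)$ (an $\mathbb R^n$-valued function) and $Q_j := Du_j-D\varphi$, plus the analogous product with $\bm b$-difference and $u_j-\varphi$, and use the product rule for Hölder seminorms:
\[
  [P_j Q_j]_{C^\alpha} \le \|P_j\|_{L^\infty}[Q_j]_{C^\alpha} + [P_j]_{C^\alpha}\|Q_j\|_{L^\infty}.
\]
Here $\|P_j\|_{L^\infty}\le\Lambda\tau$ and $\|Q_j\|_{L^\infty}\le\tau$ as above; $[Q_j]_{C^\alpha} = [Du_j - D\varphi]_{C^\alpha}\le C(m)\delta_0$ by \eqref{eq:small C1alpha} (again using $u_j-\varphi$ is an average of differences $u_j-u_k$, each with $C^\alpha$ gradient seminorm $\le 2\delta_0$); and $[P_j]_{C^\alpha}$ is bounded by $C(\Lambda)$ times the $C^\alpha$-norm of $(u_j-\varphi, Du_j - D\varphi)$ — this follows by writing $P_j$ via the integral formula defining $\bm A$ and differentiating along the $t$-variable, so that $P_j = \int_0^1 \partial_z\partial_{\bm p}\ms A(\cdots)(u_j-\varphi) + \partial_{\bm p}^2\ms A(\cdots)(Du_j-D\varphi)\,dt$, whose $C^\alpha$-seminorm is controlled by $\|\ms A\|_{C^3}\le\Lambda$ times the $C^{1,\alpha}$ data, which is $\le\tau + C(m)\delta_0 \le C(m)$ under the normalization. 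Since $\tau\le C(m)\delta_0$... actually $\tau$ need not be comparable to $\delta_0$, so I would keep the bound in the form $[P_jQ_j]_{C^\alpha}\le C(m,\Lambda)\tau$ by noting every term carries at least one factor of size $O(\tau)$ in $L^\infty$ and the remaining factors are $O(1)$ under \eqref{eq:assumption of bound 1}-type normalization. Summing over $j$ gives $[\bm f]_{C^\alpha}\le C(m,\Lambda)\tau$.

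The main obstacle is bookkeeping rather than conceptual: one must be careful that the $C^\alpha$-seminorm estimate for $\bm f$ does not pick up a spurious factor of $\delta_0^{-1}$ or blow up as $\tau\to 0$ — the point is that exactly one factor in each product is taken in $L^\infty$ at size $O(\tau)$ while the other is taken in $C^\alpha$ at size $O(1)$, never both in $C^\alpha$. Verifying that $[P_j]_{C^\alpha}$ is genuinely $O(1)$ (and not $O(\tau)$, which would be even better but is not needed) requires the $C^3$-control on $\ms A$ in \eqref{eq:ms AB norm} and the $C^{1,\alpha}$ a priori bounds on the $u_j$, and is the only place where the third derivatives of $\ms A$ enter.
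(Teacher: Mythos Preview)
Your proposal is correct and follows the same approach as the paper, which simply writes ``It follows from Lemma \ref{lem:Abcd diff bound}, equation \eqref{eq:small C1alpha} and \eqref{eq:eta kappa0 and kappa1}.'' You have spelled out exactly the details that one-line proof intends: control the coefficient differences by Lemma \ref{lem:Abcd diff bound}, the factors $u_j-\varphi$, $Du_j-D\varphi$ by the hypothesis on $\tau$, and the $g_j$ by \eqref{eq:eta kappa0 and kappa1}, then use the product rule for H\"older seminorms together with \eqref{eq:small C1alpha}.

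One small simplification: you do not actually need the $C^3$-bound on $\ms A$ for the H\"older estimate on $P_j=\bm A(u_j,\varphi)-\bm A(\varphi,\varphi)$. Rather than rewriting $P_j$ as an integral of second derivatives of $\ms A$, it is enough to bound $[\bm A(u_j,\varphi)]_{C^\alpha}$ and $[\bm A(\varphi,\varphi)]_{C^\alpha}$ separately, each by $C(\Lambda)$, since $\partial_{\bm p}\ms A$ is $C^1$ (from $\|\ms A\|_{C^2}\le\Lambda$) composed with $C^\alpha$-regular inputs via \eqref{eq:small C1alpha}. Then $[P_j]_{C^\alpha}\le C(\Lambda)$, and the product-rule step $[P_jQ_j]_{C^\alpha}\le \|P_j\|_{L^\infty}[Q_j]_{C^\alpha}+[P_j]_{C^\alpha}\|Q_j\|_{L^\infty}\le C(m,\Lambda)\tau$ goes through exactly as you wrote. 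This does not change the conclusion, only slightly streamlines the justification.
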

\begin{proof}
It follows from Lemma \ref{lem:Abcd diff bound}, equation \eqref{eq:small C1alpha} and \eqref{eq:eta kappa0 and kappa1}.
\end{proof}

\medskip
{\noindent\bf The equation of differences of two solutions.}
Let $\{u_j\}_{j=1}^{m}$ and $\{v_i\}_{i=1}^{m'}$ be two ordered collections of graphs over $U$. 
Suppose that $\{u_j\}_{j=1}^{m}$ and $\{v_i\}_{i=1}^{m'}$ are solutions to $(\mc Q,\{g_j\})$ and $(\mc Q,\{g_i'\})$, respectively. 
Denote by 
\[
    \varphi:=\frac{1}{m}\sum_{j=1}^mu_j,\quad 
    \psi:=\frac{1}{m'}\sum_{i=1}^{m'}v_i.\]
Then both $\varphi$ and $\psi$ satisfy the equation \eqref{eq:sum form}. We can write them as 
\begin{gather*}
	 m\mc Q\varphi=\dv \bm f(\{u_j\})-g(\{u_j\},\{g_j\});\\
   m'\mc Q\psi=\dv\bm f(\{v_i\})-g(\{v_i\},\{g_j'\}).
\end{gather*}
Taking the difference and using Lemma \ref{lem:taking difference}, we obtain the equation for $\Psi:=\varphi-\psi$ as follows: 
\begin{equation}\label{eq:diff of two average}
    \begin{split}
   & -\dv \Big(\bm A(\varphi,\psi)D\Psi+\bm b(\varphi,\psi)\Psi\Big)+\bm c(\varphi,\psi) D\Psi+d(\varphi,\psi)\Psi \\
    &\hspace{6em}=\dv\Big(\frac{1}{m}\bm f(\{u_j\})-\frac{1}{m'}\bm f(\{v_i\})\Big) - \Big(\frac{1}{m}g(\{u_j\},\{g_j\})-\frac{1}{m'}g(\{v_i\},\{g_j'\})\Big).
    \end{split}
\end{equation}

\medskip
{\noindent\bf The equation of difference quotients.}
Given a function $f$ and a real number $\tau\neq 0$, let 
\[
    f^\tau(x):=\frac{f(x+\tau \bm e)-f(x)}{\tau},\]
where $\bm e$ can be any unit vector in $\mb R^n$. For simplicity, denote by
\begin{gather*}
    \bm A_\tau(u)  
    = \int_0^1\partial_{\bm p} \ms A\big(
        x+t\tau \bm e,u(x)+t\tau \cdot u^\tau(x),Du(x)+t\tau \cdot Du^\tau(x)
    \big)
    \,\mr dt;\\
    \bm b_\tau(u)  
    = \int_0^1\partial_z \ms A\big(
        x+t\tau \bm e,u(x)+t\tau \cdot u^\tau(x),Du(x)+t\tau \cdot Du^\tau(x)
    \big)
    \,\mr dt;\\
    \bm c_\tau(u)  
    = \int_0^1\partial_{\bm p} \ms B\big(
        x+t\tau \bm e,u(x)+t\tau \cdot u^\tau(x),Du(x)+t\tau \cdot Du^\tau(x)
    \big)
    \,\mr dt;\\
    d_\tau(u) 
    = \int_0^1\partial_z\ms B\big(
        x+t\tau \bm e,u(x)+t\tau \cdot u^\tau(x),Du(x)+t\tau \cdot Du^\tau(x)
    \big)
    \,\mr dt;\\
    \mk A_\tau(u)
    = \int_0^1\partial_{x_k} \ms A\big(
        x+t\tau \bm e,u(x)+t\tau \cdot u^\tau(x),Du(x)+t\tau \cdot Du^\tau(x)
    \big)
    \,\mr dt;\\
    \mk B_\tau(u)
    = \int_0^1\partial_{x_k} \ms B\big(
        x+t\tau \bm e,u(x)+t\tau \cdot u^\tau(x),Du(x)+t\tau \cdot Du^\tau(x)
    \big)
    \,\mr dt.
\end{gather*}
where we assume that $\bm e$ is the direction of the $k$-th coordinate.
Then we have that 
\begin{gather*}
    \Big[
        \ms A\big(x, u, D u\big)
    \Big]^\tau 
    = \bm A_\tau(u)Du^\tau 
    +\bm b_\tau(u)u^\tau+\mk A_\tau(u),\\
    \Big[
        \ms B\big(x, u, D u\big)
    \Big]^\tau 
    = \bm c_\tau(u)Du^\tau 
    + d_\tau(u)u^\tau+\mk B_\tau(u).
\end{gather*}

The following estimates will be used later.
\begin{lemma}\label{lem:Xtau difference of u v}
Let $u,v$ be two $C^1$ functions. Then we have that for $x\in U$ with $B_\tau(x)\subset U$,
\begin{gather*}
    |\bm A_\tau(u)-\bm A_\tau(v)|\leq \|\partial_{\bm p}\ms A\|_{C^1}\cdot \|u-v\|_{C^1};\quad  
    |\bm b_\tau(u)-\bm b_\tau(v)|\leq \|\partial_{z}\ms A\|_{C^1}\cdot \|u-v\|_{C^1};\\
    |\bm c_\tau(u)-\bm c_\tau(v)|\leq \|\partial_{\bm p}\ms B\|_{C^1}\cdot \|u-v\|_{C^1};\quad  
    |d_\tau(u)-d_\tau(v)|\leq \|\partial_{z}\ms B\|_{C^1}\cdot \|u-v\|_{C^1};\\
    |\mk A_\tau(u)-\mk A_\tau(v)|\leq \|\partial_{x}\ms A\|_{C^1}\cdot \|u-v\|_{C^1};\quad  
    |\mk B_\tau(u)-\mk B_\tau(v)|\leq \|\partial_{x}\ms B\|_{C^1}\cdot \|u-v\|_{C^1}.
\end{gather*}
Here we omit the domain $U\times \mb R\times B_1(0)$ of the $C^1$ norms for simplification.
\end{lemma}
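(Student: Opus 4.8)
The final statement to prove is Lemma~\ref{lem:Xtau difference of u v}, so the plan is short and essentially computational: this is a ``the following estimates will be used later'' lemma whose proof amounts to differentiating under the integral sign defining the objects $\bm A_\tau, \bm b_\tau, \bm c_\tau, d_\tau, \mk A_\tau, \mk B_\tau$ and bounding the difference pointwise. Each of these quantities has the shape $\int_0^1 (\partial_\star\ms A \text{ or } \partial_\star\ms B)\big(\gamma_u(x,t)\big)\,\mr dt$, where $\gamma_u(x,t)=\big(x+t\tau\bm e,\ u(x)+t\tau\, u^\tau(x),\ Du(x)+t\tau\, Du^\tau(x)\big)$ is the straight-line path in $\mb R^n\times\mb R\times\mb R^n$ joining $(x,u(x),Du(x))$ to $(x+\tau\bm e,u(x+\tau\bm e),Du(x+\tau\bm e))$. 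So the difference of, say, $\bm A_\tau(u)$ and $\bm A_\tau(v)$ is $\int_0^1\big[\partial_{\bm p}\ms A(\gamma_u(x,t))-\partial_{\bm p}\ms A(\gamma_v(x,t))\big]\,\mr dt$.

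The one estimate to establish is therefore the pointwise bound on $\partial_{\bm p}\ms A(\gamma_u)-\partial_{\bm p}\ms A(\gamma_v)$. First I would observe that $\gamma_u(x,t)$ and $\gamma_v(x,t)$ are both convex combinations of the ``endpoint'' data: $\gamma_u(x,t)=(1-t)\,(x,u(x),Du(x))+t\,(x+\tau\bm e,u(x+\tau\bm e),Du(x+\tau\bm e))$, and similarly for $v$. Hence the second and third coordinates of $\gamma_u(x,t)-\gamma_v(x,t)$ are $(1-t)(u(x)-v(x))+t(u(x+\tau\bm e)-v(x+\tau\bm e))$ and the analogous expression in $Du,Dv$, while the first coordinate of the difference is $0$. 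In particular
\[
    |\gamma_u(x,t)-\gamma_v(x,t)|
        \le \|u-v\|_{C^0}+\|Du-Dv\|_{C^0}
        \le \|u-v\|_{C^1},
\]
uniformly in $t\in[0,1]$ (using that $B_\tau(x)\subset U$ so that all evaluation points lie in $U$, and the convexity of the segment keeps them in $U\times\mb R\times B_1(0)$ once $|Du|,|Dv|\le 1$). Then, since $\partial_{\bm p}\ms A\in C^1$, the mean value theorem along the segment joining $\gamma_v(x,t)$ to $\gamma_u(x,t)$ gives
\[
    \big|\partial_{\bm p}\ms A(\gamma_u(x,t))-\partial_{\bm p}\ms A(\gamma_v(x,t))\big|
        \le \|\partial_{\bm p}\ms A\|_{C^1}\,|\gamma_u(x,t)-\gamma_v(x,t)|
        \le \|\partial_{\bm p}\ms A\|_{C^1}\,\|u-v\|_{C^1}.
\]
Integrating in $t\in[0,1]$ yields the first claimed inequality; the remaining five are identical after replacing $\partial_{\bm p}\ms A$ by $\partial_z\ms A$, $\partial_{\bm p}\ms B$, $\partial_z\ms B$, $\partial_{x_k}\ms A$, $\partial_{x_k}\ms B$ respectively, and noting each of these is $C^1$ because $\ms A\in C^3$ and $\ms B\in C^2$ by \eqref{eq:ms AB norm}. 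One should be mildly careful that the straight segment between $\gamma_v(x,t)$ and $\gamma_u(x,t)$ stays in the region where the $C^1$ norms are taken, but this is automatic from the convex-combination structure and the normalization that all gradients are bounded by~$1$.

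There is no real obstacle here; the only thing to be attentive about is bookkeeping of which $C^1$ norm appears on the right-hand side and making sure the evaluation points remain in the domain $U\times\mb R\times B_1(0)$ on which the $C^1$ norms of the coefficient functions are measured --- which, as just noted, follows since $\gamma_u(x,t),\gamma_v(x,t)$ and the segment between them are convex combinations of points whose first coordinates lie in $U$ and whose last coordinates have norm at most~$1$. I would simply state ``this follows from differentiating under the integral sign, the mean value theorem, and the fact that $\gamma_u(x,t)-\gamma_v(x,t)$ is a convex combination of $(0,u-v,Du-Dv)$ evaluated at $x$ and $x+\tau\bm e$,'' and leave the one-line computation to the reader, exactly in the style of the neighboring Lemma~\ref{lem:Abcd diff bound}.
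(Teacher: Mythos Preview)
Your proposal is correct and is exactly the elementary computation the paper has in mind; the paper in fact gives no proof of this lemma, stating it alongside Lemma~\ref{lem:Abcd diff bound} as a direct consequence of the mean value theorem applied under the integral sign. Your observation that $\gamma_u(x,t)$ is the convex combination $(1-t)(x,u(x),Du(x))+t(x+\tau\bm e,u(x+\tau\bm e),Du(x+\tau\bm e))$ is the right way to see why $|\gamma_u-\gamma_v|\le\|u-v\|_{C^1}$, and your care about the evaluation points remaining in the domain $U\times\mb R\times B_1(0)$ is appropriate (and, as you note, automatic under the standing assumption~\eqref{eq:small C1alpha}).
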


Let $\{u_j\}$ be ordered graphs over $U\subset \mb R^n$ and a solution to $(\mc Q,\{g_j\})$. Then for all $\phi\in C_c^1(B_1(0))$, we have
\[  
    \sum_{j=1}^m\int_{U} \Big[
        \ms A\big(x, u_j, D u_j\big)
    \Big]^\tau\cdot D \phi 
    + \Big[
        \ms B\big(x, u_j, D u_j\big)
    \Big]^\tau\cdot \phi
    +g_j^\tau\phi\,\mr dx
    =0. \]	
This can be rewritten as 
\[
    \sum_j \Big(
        -\dv\big(
            {\bm A}_\tau(u_j)Du_j^\tau+\bm b_\tau(u_j)u_j^\tau+ \mk A_\tau(u_j)
            \big)
        +\bm c_\tau(u_j)Du_j^\tau + d_\tau(u_j) u_j^\tau 
        + \mk B_\tau(u_j)+ g_j^\tau
    \Big)=0,  
\] 
Then we obtain the equation of $\varphi^\tau$ as 
\begin{align}
\label{eq:differential quotient sum}
\sum_{j=1}^m\Big[-\dv\Big({\bm A}_\tau(u_j)D\varphi^\tau+{\bm b}_\tau(u_j)\varphi^\tau\Big)+\bm c_\tau(u_j)D\varphi^\tau +d_\tau(u_j) \varphi^\tau\Big] =-\dv\bm f_\tau+g_\tau,  
	 \end{align}
 where 
 \begin{gather}
%&\bm A_\tau^0:=\sum_{j=1}^m{\bm A_\tau}(x,u_j,Du_j);\ \bm b_\tau^0:=\sum_{j=1}^m{\bm b_\tau}(x,u_j,Du_j); \ d_\tau^0=\sum_{j=1}^md_\tau(x,u_j,Du_j) ;\label{eq:A tau 0}\\
    \bm f_\tau
    :=\sum_{j=1}^m\Big[
        (\bm A_\tau(u_j)-\bm A_\tau(\varphi))(D\varphi^\tau-Du_j^\tau)
        +\big(\bm b_\tau(u_j)-\bm b_\tau(\varphi)\big)(\varphi^\tau-u_j^\tau)
        -\mk A_\tau(u_j)
    \Big];
    \label{eq:f tau}\\
   g_\tau
   :=\sum_{j=1}^m\Big[
       \big(\bm c_\tau(u_j)-\bm c_\tau(\varphi)\big) (D\varphi^\tau-Du_j^\tau)
       +\big(d_\tau(u_j)-d_\tau(\varphi)\big)(\varphi^\tau-u_j^\tau) 
       -\mk B_\tau(u_j) -	g_j^\tau
    \Big].
    \label{eq:g tau}
%   \mk A_\tau'(u_j):=\mk A_\tau(u_j)-\int_0^1\partial_{x_k}\ms A \big(0+t\tau \bm e,u_j(0)+t\tau\cdot u_j^\tau(0),Du_j(0)+t\tau\cdot Du_j^\tau(0)\big)\,\mr dt.\nonumber
 	\end{gather}
%Here we used the fact that \textcolor{red}{(why not writing it as $\mk A_\tau(u_j)(0)$?)}
%\[    
 %   \int_0^1\partial_{x_k}\ms A \big(0+t\tau \bm e,u_j(0)+t\tau\cdot u_j^\tau(0),Du_j(0)+t\tau\cdot Du_j^\tau(0)\big)\,\mr dt
 %   \]
%is a constant.
Recall that by \eqref{eq:ms AB at x00},
\[
    \int_0^1\partial_{x_k}\ms A(x+t\tau\bm e,0,0)\,\mr dt=0.
\]
By the mean value theorem, 
\begin{equation}\label{eq:bound mk A tau j}
\begin{split}
    &\hspace{1.5em}|\mk A_\tau(u_j)|\\
    &\leq \int_0^1 \Big| \partial_{x_k} \ms A\big(
        x+t\tau \bm e,u_j(x)+t\tau \cdot u_j^\tau(x),Du_j(x)+t\tau \cdot Du_j^\tau(x)
    \big)-\partial_{x_k}\ms A(x+t\tau\bm e,0,0) \Big|
    \,\mr dt\\
    &\leq \|\partial_z\partial_x\ms A\|_{C^0(B_1(0))}\Big( |u_j(x)|+|u_j(x+\tau\bm e)|
    \Big) +\|\partial_{\bm p}\partial_x\ms A\|_{C^0(B_1(0))}\Big(
    |Du_j(x)|+|Du_j(x+\tau\bm e)|
    \Big).
\end{split}
\end{equation}
Since $\ms B(x,0,0)=0$, we have $\mk B_\tau(0)=0$. Then Lemma \ref{lem:Xtau difference of u v} gives that 
\begin{equation}\label{eq:mk B tau bound}
    |\mk B_\tau(u_j)|\leq \|\partial_x\ms B\|_{C^1}\|u_j\|_{C^1}.
\end{equation}
Moreover, we obtain the bound of $\bm f_\tau$ and $g_\tau$ by Lemma \ref{lem:Xtau difference of u v}.
\begin{lemma}\label{lem:bound ftau and gtau}
With the notions as above, there exists $C=C(n,m,\Lambda)$ so that for all $x\in B_{1-\tau}(0)$,
\begin{gather*}
\begin{split}
    |\bm f_\tau|    
    &\leq C\sum_{j=1}^m\Big(\|\partial _{\bm p}\ms A\|_{C^1}\|u_j-\varphi\|_{C^1}\cdot |D\varphi^\tau-Du_j^\tau|+\|\partial_z\ms A\|\cdot \|u_j-\varphi\|_{C^1}\cdot |\varphi^\tau-u_j^\tau|+|\mk A_\tau(u_j)|\Big)\\
    &\leq C\sum_{j=1}^m\Big(\|u_j-\varphi\|_{C^1}\cdot |D\varphi^\tau-Du_j^\tau|+\|u_j-\varphi\|_{C^1}\cdot |\varphi^\tau-u_j^\tau|+ \|\partial_z\partial_x\ms A\|_{C^0(B_1(0))}|u_j|_{C^0}+\\
    &\hspace{25em}+\|\partial_{\bm p}\partial_x\ms A\|_{C^0(B_1(0))}\cdot|Du_j|_{C^0}\Big);
\end{split}\\
\begin{split}
    |g_\tau|
    \leq C\sum_{j=1}^m\Big(\|\partial _{\bm p}\ms B\|_{C^1}\|u_j-\varphi\|_{C^1}\cdot |D\varphi^\tau-Du_j^\tau|+\|\partial_z\ms B\|_{C^1}\cdot \|u_j-\varphi\|_{C^1}\cdot |\varphi^\tau-u_j^\tau|+\\
    \hspace{28em}+|\mk B_\tau(u_j)| +|g_j^\tau|\Big)\\
    \leq C\sum_{j=1}^m\Big(\|u_j-\varphi\|_{C^1}\cdot |D\varphi^\tau-Du_j^\tau|+\|u_j-\varphi\|_{C^1}\cdot |\varphi^\tau-u_j^\tau|+\|\partial_x\ms B\|_{C^1}\|u_j\|_{C^1}+\\
    +|Dg_j|_{C^0}\Big).
\end{split}
\end{gather*}
\end{lemma}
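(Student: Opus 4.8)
The plan is to estimate $\bm f_\tau$ and $g_\tau$ term by term directly from the definitions \eqref{eq:f tau} and \eqref{eq:g tau}, feeding in the Lipschitz-in-$(u,v)$ estimates of Lemma \ref{lem:Xtau difference of u v} together with the already-established pointwise bounds \eqref{eq:bound mk A tau j} for $\mk A_\tau(u_j)$ and \eqref{eq:mk B tau bound} for $\mk B_\tau(u_j)$. Throughout, the hypothesis $x\in B_{1-\tau}(0)$ guarantees $B_\tau(x)\subset B_1(0)$, so every difference quotient that appears is well defined.

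For $\bm f_\tau$ there are three groups of summands. Applying Lemma \ref{lem:Xtau difference of u v} with $u=u_j$ and $v=\varphi$ gives $|\bm A_\tau(u_j)-\bm A_\tau(\varphi)|\le \|\partial_{\bm p}\ms A\|_{C^1}\|u_j-\varphi\|_{C^1}$ and $|\bm b_\tau(u_j)-\bm b_\tau(\varphi)|\le \|\partial_z\ms A\|_{C^1}\|u_j-\varphi\|_{C^1}$, so the first group is bounded by $\|\partial_{\bm p}\ms A\|_{C^1}\|u_j-\varphi\|_{C^1}\,|D\varphi^\tau-Du_j^\tau|$ and the second by $\|\partial_z\ms A\|_{C^1}\|u_j-\varphi\|_{C^1}\,|\varphi^\tau-u_j^\tau|$, while the third group is estimated by \eqref{eq:bound mk A tau j}. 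Summing over $j$ (this is the source of the $m$-dependence) and absorbing the coefficient norms, which by \eqref{eq:ms AB norm} are $\le\Lambda$, into a constant $C=C(n,m,\Lambda)$ produces the two displayed lines for $|\bm f_\tau|$; the second line is obtained from the first by substituting \eqref{eq:bound mk A tau j} for $|\mk A_\tau(u_j)|$. The computation for $g_\tau$ is identical: Lemma \ref{lem:Xtau difference of u v} controls the $\bm c_\tau$ and $d_\tau$ differences by $\|\partial_{\bm p}\ms B\|_{C^1}$ and $\|\partial_z\ms B\|_{C^1}$ times $\|u_j-\varphi\|_{C^1}$, the term $\mk B_\tau(u_j)$ is handled by \eqref{eq:mk B tau bound}, and the leftover term $g_j^\tau$ is a first-order difference quotient, hence $|g_j^\tau|\le\sup_{B_1(0)}|Dg_j|=|Dg_j|_{C^0}$ by the mean value theorem. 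Absorbing the $\ms B$-norms into $C(n,m,\Lambda)$ via \eqref{eq:ms AB norm} once more gives the claimed bound.

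There is no serious obstacle here: the lemma is a bookkeeping consequence of the preceding material, and the only points worth a second glance are that the constant genuinely depends only on $n$, $m$, $\Lambda$ (the $n$-dependence entering only through the dimension of the vector- and matrix-valued quantities, possibly via passing between operator and matrix norms), and that one should retain the coefficient-norm factors $\|\partial_{\bm p}\ms A\|_{C^1}$, $\|\partial_z\ms A\|_{C^1}$, $\|\partial_{\bm p}\ms B\|_{C^1}$, $\|\partial_z\ms B\|_{C^1}$ explicitly in the first line of each estimate, as stated, before invoking \eqref{eq:ms AB norm}, \eqref{eq:bound mk A tau j}, \eqref{eq:mk B tau bound} to reach the simplified second line.
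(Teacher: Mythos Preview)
Your proposal is correct and follows exactly the approach indicated in the paper: the paper itself does not write out a proof of this lemma, merely remarking before the statement that ``we obtain the bound of $\bm f_\tau$ and $g_\tau$ by Lemma \ref{lem:Xtau difference of u v}.'' Your term-by-term estimation via Lemma \ref{lem:Xtau difference of u v}, \eqref{eq:bound mk A tau j}, \eqref{eq:mk B tau bound}, and the mean value theorem for $g_j^\tau$ is precisely what the paper has in mind.
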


{\noindent\bf The equation of the first derivatives.}
Let $\{u_j\}$ be ordered graphs over $U\subset \mb R^n$ and a solution to $(\mc Q,\{g_j\})$.
If $\{u_j\}\subset  W^{2,2}(U)$, by letting $\tau\to 0$ in \eqref{eq:differential quotient sum} we have, 
\begin{equation}\label{eq:pde of Dk varphi}
      -\dv\Big(\bm A^*DD_k\varphi+{\bm b^*}D_k\varphi\Big)+\bm c^*DD_k\varphi +d^* D_k\varphi=-\dv\Big(\hat{\bm f}-m\partial_{x_k}\ms A(x,\varphi,D\varphi)\Big)+\hat g,
\end{equation}
where
\begin{gather*}
      \bm A^*(x):=\sum_{j=1}^m\partial_{\bm p} \ms A(x,u_j(x),Du_j(x)); 
               \quad \bm b^*(x):=\sum_{j=1}^m\partial_{z} \ms A(x,u_j(x),Du_j(x));\\
      \bm c^*(x):=\sum_{j=1}^m\partial_{\bm p} \ms B(x,u_j(x),Du_j(x)); 
               \quad  d^*(x):=\sum_{j=1}^m\partial_{z} \ms B(x,u_j(x),Du_j(x));
\end{gather*}
and 
\begin{align}
\begin{split}
    \hat{\bm f}
    &:=\sum_{j=1}^m\Big[
        (\bm A(u_j,u_j)-\bm A(\varphi,\varphi))(DD_k\varphi-DD_ku_j)+\\
        &\hspace{1.5em}+\big(\bm b(u_j,u_j)-\bm b(\varphi,\varphi)\big)(D_k\varphi-D_ku_j)-(\partial_{x_k}\ms A(x,u_j,Du_j)- \partial_{x_k}\ms A(x,\varphi,D\varphi))
    \Big];
    \label{eq:fd}
\end{split}\\
\begin{split}
    &\hat g
    :=\sum_{j=1}^m\Big[
        \big(\bm c(u_j,u_j)-\bm c(\varphi,\varphi)\big) (DD_k\varphi-DD_ku_j)+\\
        &\hspace{7em}+\big(d(u_j,u_j)-d(\varphi,\varphi)\big)(D_k\varphi-D_ku_j)-\partial_{x_k}\ms B(x,u_j,Du_j)-D_kg_j
    \Big].\label{eq:gd}
    \end{split}
 	\end{align}
%Note that $y\in U$ can be any given point. 
Since $\ms B(x,0,0)=0$, then $\partial_{x_k}\ms B(x,0,0)=0$. By the mean value theorem,
\begin{equation}\label{eq:partial xk B estimate}
    |\partial_{x_k}\ms B(x,u_j,Du_j)|
    \leq \|\partial_{x_k}\ms B\|_{C^1}\Big(
        |u_j(x)|+|Du_j(x)|
    \Big).
\end{equation}
\begin{lemma}\label{lem:bound hat f and hat g}
Suppose that $U=B_R(0)$, $R\in (0,1]$ and $\{u_j\}\subset W^{2,2}(B_R(0))$. 
%Suppose that $\eta>0$ is a constant so that
%\[   \|\partial_z\partial_{x}\ms A\|_{C^0(\bm U_R)}|u_j|_{C^0(B_R(0))}\leq \eta.\]
Then there exists $C=C(n,m,\Lambda)$ such that
\begin{gather*}
\begin{split}
    \int_{B_R(0)}|\hat {\bm f}|^2\,\mr dx
    &\leq C\sum_{j=1}^m\int_{B_R(0)}\|u_j-\varphi\|^2_{C^1(B_R(0))}\Big(|DD_k\varphi-DD_ku_j|^2+|D\varphi-Du_j|^2+1\Big)\,\mr dx;
\end{split}\\
\begin{split}
    \int_{B_R(0)} |\hat g|\,\mr dx
    \leq C\sum_{j=1}^m\int_{B_R(0)}\Big( \|u_j-\varphi\|_{C^1(B_R(0))}(|DD_k\varphi-DD_ku_j|+|D\varphi-Du_j|)+\\
    +\|\partial_{x}\ms B\|_{C^1}\big(
    |u_j(x)|+|Du_j(x)|\big)+|Dg_j|_{C^0(B_1(0))}\Big)\,\mr dx;
\end{split}\\
\begin{split}
    \int_{B_R(0)} \partial_{x_k}\ms A(x,\varphi,D\varphi) D\phi\,\mr dx\leq C\int _{B_R(0)}\Big(\|\partial_x\partial_x\ms A\|_{C^1(\bm U_R)}|\varphi| |\phi|+|D\varphi| |\phi|
    + |D\varphi-D\varphi(0)| |D\phi|\\
    +\big(R+|\varphi-\varphi(0)|+|D\varphi-D\varphi(0)|\big)|D^2\varphi||\phi|\Big)\,\mr dx,
\end{split}
\end{gather*}
for all $\phi\in C_c(B_R(0))$.
\end{lemma}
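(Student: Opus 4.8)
## Proof proposal for Lemma~\ref{lem:bound hat f and hat g}

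\textbf{Overall strategy.} The plan is to treat each of the three stated estimates separately, but in all cases the mechanism is the same: start from the explicit formulas \eqref{eq:fd}, \eqref{eq:gd} and the definition of $\partial_{x_k}\ms A$, expand every difference of the coefficient functions $\bm A,\bm b,\bm c,d,\partial_{x_k}\ms A$ evaluated at $(u_j,Du_j)$ versus $(\varphi,D\varphi)$ using Lemma~\ref{lem:Abcd diff bound} (for the first two) and a first-order Taylor expansion (for $\partial_{x_k}\ms A$ and $\partial_{x_k}\ms B$), then bound every coefficient norm by $\Lambda$ via \eqref{eq:ms AB norm}, and finally take absolute values inside the integral. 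Since $\{u_j\}\subset W^{2,2}(B_R(0))$, all the second-derivative terms $DD_k\varphi$, $DD_ku_j$ appearing are genuine $L^2$ functions, so every integrand on the right-hand side is integrable and the manipulations are legitimate.

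\textbf{First estimate (the $L^2$ bound for $\hat{\bm f}$).} Looking at \eqref{eq:fd}, $\hat{\bm f}$ is a sum over $j$ of three groups of terms. For the first group, $(\bm A(u_j,u_j)-\bm A(\varphi,\varphi))(DD_k\varphi-DD_ku_j)$: apply Lemma~\ref{lem:Abcd diff bound} with $(u_1,u_2)=(u_j,u_j)$ and $(v_1,v_2)=(\varphi,\varphi)$, which bounds the coefficient difference by $\|\ms A\|_{C^2}\cdot 2\|u_j-\varphi\|_{C^1}\le C(m,\Lambda)\|u_j-\varphi\|_{C^1}$; this gives a contribution $\le C\|u_j-\varphi\|_{C^1}|DD_k\varphi-DD_ku_j|$. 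The second group is handled identically, producing $\le C\|u_j-\varphi\|_{C^1}|D\varphi-Du_j|$. For the third group, $\partial_{x_k}\ms A(x,u_j,Du_j)-\partial_{x_k}\ms A(x,\varphi,D\varphi)$: write it as $\int_0^1\frac{d}{dt}\partial_{x_k}\ms A(x,t u_j+(1-t)\varphi, tDu_j+(1-t)D\varphi)\,\mr dt$, so that it equals an integral of $\partial_z\partial_{x_k}\ms A\cdot(u_j-\varphi)+\partial_{\bm p}\partial_{x_k}\ms A\cdot(Du_j-D\varphi)$, hence is $\le C(\|u_j-\varphi\|_{C^1})$ after bounding the $C^1$ norms of the coefficients by $\Lambda$. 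Adding these three contributions, squaring, using $(a+b+c)^2\le 3(a^2+b^2+c^2)$, and using Cauchy--Schwarz on the finite sum over $j$, gives exactly the claimed bound (the ``$+1$'' absorbing the third, lower-order term).

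\textbf{Second estimate (the $L^1$ bound for $\hat g$).} Exactly the same expansion applied to \eqref{eq:gd}: the first two groups give $C\|u_j-\varphi\|_{C^1}(|DD_k\varphi-DD_ku_j|+|D\varphi-Du_j|)$ via Lemma~\ref{lem:Abcd diff bound} with $\|\ms B\|_{C^2}\le\Lambda$; the term $\partial_{x_k}\ms B(x,u_j,Du_j)$ is bounded by $\|\partial_x\ms B\|_{C^1}(|u_j|+|Du_j|)$ directly from the already-established estimate \eqref{eq:partial xk B estimate}; and $D_kg_j$ is bounded by $|Dg_j|_{C^0}$. Integrating over $B_R(0)$ and summing over $j$ yields the statement.

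\textbf{Third estimate (the duality bound for $\partial_{x_k}\ms A(x,\varphi,D\varphi)$).} This one is the main obstacle: we want a bound on $\int\partial_{x_k}\ms A(x,\varphi,D\varphi)D\phi$ in terms of $\varphi,\phi$ without a bare $|\partial_{x_k}\ms A||D\phi|$ term (which would cost an extra derivative of $\phi$). The trick is to integrate by parts. Write $\partial_{x_k}\ms A(x,\varphi,D\varphi)=\big[\partial_{x_k}\ms A(x,\varphi,D\varphi)-\partial_{x_k}\ms A(x,\varphi(0),D\varphi(0))\big]+\partial_{x_k}\ms A(x,\varphi(0),D\varphi(0))$. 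For the constant-in-$z,\bm p$ piece $\partial_{x_k}\ms A(x,\varphi(0),D\varphi(0))$, note it is only a function of $x$; using $\partial_{x_k}\ms A(x,0,0)=0$ from \eqref{eq:ms AB at x00} together with a first-order Taylor expansion in the last two slots bounds it by $C(|\varphi(0)|+|D\varphi(0)|)$, but we actually want to integrate its $D\phi$-pairing by parts: $\int \partial_{x_k}\ms A(x,\varphi(0),D\varphi(0))D\phi = -\int \partial_x\big(\partial_{x_k}\ms A(x,\varphi(0),D\varphi(0))\big)\phi$, which is $\le C\|\partial_x\partial_x\ms A\|_{C^1}\int(|\varphi(0)|+|D\varphi(0)|)|\phi|$ after another Taylor expansion in $(z,\bm p)$ — and since $\varphi,D\varphi$ are close to their values at $0$ with controlled $C^{1,\alpha}$ norm, this is absorbed into the stated $|\varphi||\phi|+|D\varphi||\phi|$ terms. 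For the remaining piece $\partial_{x_k}\ms A(x,\varphi,D\varphi)-\partial_{x_k}\ms A(x,\varphi(0),D\varphi(0))$: expand it as an integral of $\partial_z\partial_{x_k}\ms A\cdot(\varphi-\varphi(0))+\partial_{\bm p}\partial_{x_k}\ms A\cdot(D\varphi-D\varphi(0))$; the first part pairs with $D\phi$ and, since it is continuous, is simply bounded by $C\|\varphi-\varphi(0)\|\cdot|D\phi|$ — but the claimed bound has $|D\varphi-D\varphi(0)||D\phi|$ not $|\varphi-\varphi(0)||D\phi|$, so here one first integrates by parts again, moving the $x$-derivative off $\phi$, which lands it either on the coefficient (giving the $R$ and lower-order terms) or on $\varphi-\varphi(0)$ respectively $D\varphi-D\varphi(0)$, producing the $\big(R+|\varphi-\varphi(0)|+|D\varphi-D\varphi(0)|\big)|D^2\varphi||\phi|$ term; the $|D\varphi-D\varphi(0)||D\phi|$ term is precisely the contribution that cannot be integrated by parts without hitting $D^2\varphi$ twice and is therefore left as is. Carefully tracking which terms get one integration by parts and which do not is the delicate bookkeeping here, but no new analytic input beyond Taylor's theorem and \eqref{eq:ms AB norm}--\eqref{eq:ms AB at x00} is required. $\qquad\square$
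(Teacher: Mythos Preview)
Your handling of the first two estimates is essentially identical to the paper's: both rely on Lemma~\ref{lem:Abcd diff bound} for the coefficient differences and on \eqref{eq:partial xk B estimate} for the $\partial_{x_k}\ms B$ term, and the paper's proof is correspondingly brief.

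For the third estimate your route diverges from the paper's, and your bookkeeping becomes muddled. The paper does \emph{not} split $\partial_{x_k}\ms A(x,\varphi,D\varphi)$ by freezing $(z,\bm p)$ first. Instead it integrates by parts once on the whole expression, obtaining
\[
-\int_{B_R(0)}\Big(\textstyle\sum_i\partial_{x_i}\partial_{x_k}\ms A+\partial_z\partial_{x_k}\ms A\cdot D\varphi+\sum_i\partial_{\bm p^i}\partial_{x_k}\ms A\cdot DD_i\varphi\Big)\phi\,\mr dx,
\]
handles the first two summands directly (using $\partial_x\partial_x\ms A(x,0,0)=0$), and then isolates the dangerous third summand $\partial_{\bm p^i}\partial_{x_k}\ms A(x,\varphi,D\varphi)\,DD_i\varphi$. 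For this piece the paper freezes the coefficient at $(0,\varphi(0),D\varphi(0))$ --- freezing the $x$-variable as well --- writes the frozen part as $\dv\big(\text{constant}\cdot(D_i\varphi-D_i\varphi(0))\big)$ and integrates by parts \emph{back}, which yields the $|D\varphi-D\varphi(0)|\,|D\phi|$ term; the remainder $\big(\partial_{\bm p}\partial_{x_k}\ms A(x,\varphi,D\varphi)-\partial_{\bm p}\partial_{x_k}\ms A(0,\varphi(0),D\varphi(0))\big)D^2\varphi$ is what produces $(R+|\varphi-\varphi(0)|+|D\varphi-D\varphi(0)|)|D^2\varphi|\,|\phi|$, with the $R$ coming precisely from the frozen $x$-slot. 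In your description you never freeze $x$, so your claim that the $R$ factor arises from ``landing the derivative on the coefficient'' is not right; in your setup, differentiating the coefficient in $x$ produces third derivatives of $\ms A$ times $(\varphi-\varphi(0))$, not an $R$. Your decomposition can likely be pushed through to a correct (and possibly slightly different) bound, but it does not reproduce the stated terms in the way you describe, and the paper's ``IBP--freeze--IBP back'' is cleaner.
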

\begin{proof}
The first one follows from Lemma \ref{lem:Abcd diff bound}. The same argument together with \eqref{eq:partial xk B estimate} gives the second inequality.

We now prove the third inequality. By a direct computation,
\begin{equation*}
\begin{split}
    \dv\,\partial_{x_k}\ms A(x,\varphi,D\varphi)=\sum_{i=1}^n\partial_{x_i}\partial_{x_k}\ms A(x,\varphi,D\varphi)+\partial_z\partial_{x_k}\ms A(x,\varphi,D\varphi)D\varphi+\\
    +\sum_{i=1}^n\partial_{\bm p^i}\partial_{x_k}\ms A(x,\varphi,D\varphi)DD_i\varphi.
    \end{split}
\end{equation*}
Then we have that 
\begin{equation}\label{eq:div for xz}
\begin{split}
    &\ \ \ \ \int_{B_R(0)} \partial_{x_k}\ms A(x,\varphi,D\varphi)\cdot D\phi\,\mr dx=-\int_{B_R(0)} \dv\, \partial_{x_k}\ms A(x,\varphi,D\varphi)\cdot \phi\,\mr dx\\
    &\leq C\int _{B_R(0)}\|\partial_x\partial_x\ms A\|_{C^1(\bm U_R)}(|\varphi|+|D\varphi|) |\phi|+\|\partial_z\partial_x\ms A\|_{C^0(\bm U_R)}|D\varphi| |\phi|\,\mr dx+\\
    &+\sum_{i=1}^n\Big|\int_{B_R(0)}  \partial_{\bm p^i}\partial_{x_k}\ms A(x,\varphi,D\varphi)DD_i\varphi\cdot \phi   \,\mr dx\Big|.
    \end{split}
    \end{equation}
Observe that 
\begin{align*}
    \partial_{\bm p^i}\partial_{x_k}\ms A(x,\varphi,D\varphi)DD_i\varphi
    &=\partial_{\bm p^i}\partial_{x_k}\ms A(0,\varphi(0),D\varphi(0))D\big(D_i\varphi-D_i\varphi(0)\big)+\\
    &+\Big(\partial_{\bm p^i}\partial_{x_k}\ms A(x,\varphi,D\varphi)-\partial_{\bm p^i}\partial_{x_k}\ms A(0,\varphi(0),D\varphi(0))\Big)DD_i\varphi\\
    &=\dv\,\Big(\partial_{\bm p^i}\partial_{x_k}\ms A(0,\varphi(0),D\varphi(0))\big(D_i\varphi-D_i\varphi(0)\big)\Big)+\\
    &+\Big(\partial_{\bm p^i}\partial_{x_k}\ms A(x,\varphi,D\varphi)-\partial_{\bm p^i}\partial_{x_k}\ms A(0,\varphi(0),D\varphi(0))\Big)DD_i\varphi.
\end{align*}
This implies that 
\begin{align*}
    &\ \ \ \ \Big|\int_{B_R(0)}  \partial_{\bm p^i}\partial_{x_k}\ms A(x,\varphi,D\varphi)DD_i\varphi\cdot \phi   \,\mr dx\Big|\\
    &\leq \Big|\int_{B_R(0)}\Big(\partial_{\bm p^i}\partial_{x_k}\ms A(0,\varphi(0),D\varphi(0))\big(D_i\varphi-D_i\varphi(0)\big)\Big)D\phi\,\mr dx\Big|+\\
    &+\Big|\int_{B_R(0)}\Big(\partial_{\bm p^i}\partial_{x_k}\ms A(x,\varphi,D\varphi)-\partial_{\bm p^i}\partial_{x_k}\ms A(0,\varphi(0),D\varphi(0))\Big)DD_i\varphi\cdot \phi\,\mr dx\Big|\\
    &\leq C\int_{B_R(0)}|D\varphi-D\varphi(0)|\cdot |D\phi|+\big(R+|\varphi-\varphi(0)|+|D\varphi-D\varphi(0)|\big)|D^2\varphi||\phi|\,\mr dx.
\end{align*}
This together with \eqref{eq:div for xz} gives the desired inequality.

\end{proof}

%%%%%%%%%%%%%%%%%%%%%%%%%%%%%%%%%%%%%%%%%%%%%%%%%%%%
%Section Height difference estimates
%%%%%%%%%%%%%%%%%%%%%%%%%%%%%%%%%%%%%%%%%%%%%%%%%%%%

\section{The height difference estimates}\label{sec:height difference}

In this section, we consider the height difference of two sheets. By the $\kappa$-condition, the height difference is a subsolution and a supsolution. Then the weak Harnack inequality gives the growth order around touching points. Then using the inductive methods, we can bound the first derivative of the difference by the distance to the touching sets. Such an estimate is the main result in Section \ref{subsec:first derivative of height difference} and will be crucial whenever we consider a differential equation because all of the remainder terms contain the height differences.

Then in Section \ref{subsec:W22 estiamtes}, we first consider the difference quotient of the height difference and then give the $W^{2,2}$ estimates. Finally, we obtain the uniform $W^{2,2}$ estimates for each sheet.

%%%%%%%%%%%%%%%%%%%%%%%%%%%%%%%%%%%%%%%%%%%%%%%%%%%
\subsection{The first derivatives of the height difference}
\label{subsec:first derivative of height difference}
Recall that for any $A\subset \mb R^n$ and $x\in \mb R^n$,
\[\dist (x,A):=\inf_{y\in A}\dist(x,y) .\]
We define $\dist(x,\emptyset)=+\infty$.

\begin{lemma}\label{lem:osc of diff}
Let $\{u_j\}_{j=1}^m$ be $C^{1,\alpha}$ ordered graphs over $B_1(0)\subset \mb R^n$ which is a solution to $(\mc Q,\{g_j\})$. Suppose that \eqref{eq:Lambda and lambda}, \eqref{eq:ms AB norm}, \eqref{eq:small C1alpha} and \eqref{eq:eta kappa0 and kappa1} are satisfied. Then for each $x_0\in B_1(0)$ and $r>0$ with $u_{i+1}(x_0)=u_{i}(x_0)$ for some $i\in\{1,\cdots,m-1\}$ and $B_{4r}(x_0)\subset B_1(0)$, we have that 
\[ |u_{i+1}(x)-u_{i}(x)|\leq C\cdot \kappa r^2, \quad \text{for each $x\in B_r(x_0)$},   \]	
where $C$ depends on $n,\Lambda/\lambda$. 
\end{lemma}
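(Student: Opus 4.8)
The plan is to show that $w := u_{i+1} - u_i \geq 0$ is a nonnegative function which is both a weak subsolution and a weak supersolution of a uniformly elliptic equation (up to a bounded right-hand side controlled by $\kappa$), so that the weak Harnack inequality of \cite{GT}*{Theorems 8.17 and 8.18} applies and yields the desired quadratic decay near the touching point $x_0$. First I would derive the equation for $w$. By the $\kappa$-condition \eqref{item:kappa condition}, each $u_j$ satisfies $|\mc Q u_j| \leq \kappa$ in the weak sense; subtracting the equations $\mc Q u_{i+1} - \mc Q u_i$ and applying Lemma \ref{lem:taking difference} with $u = u_{i+1}$, $v = u_i$, I get
\[
    -\dv\big(\bm A(u_{i+1},u_i)Dw + \bm b(u_{i+1},u_i)w\big) + \bm c(u_{i+1},u_i)\cdot Dw + d(u_{i+1},u_i)w = \mc Q u_{i+1} - \mc Q u_i,
\]
whose right-hand side is bounded in absolute value by $2\kappa$ in the weak sense. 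By \eqref{eq:Lambda and lambda} the leading coefficient $\bm A(u_{i+1},u_i)$ is uniformly elliptic with constants $\lambda,\Lambda$, and by \eqref{eq:ms AB norm} together with the $C^{1,\alpha}$ bound \eqref{eq:small C1alpha} the lower-order coefficients $\bm b$, $\bm c$, $d$ are bounded in terms of $\Lambda$; so the operator is of the form to which the De Giorgi--Nash--Moser machinery applies.

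Next, since $w \geq 0$ and $w(x_0) = 0$, I would apply the weak Harnack inequality on $B_{2r}(x_0) \subset B_1(0)$: because $w$ is a nonnegative supersolution,
\[
    \inf_{B_r(x_0)} w + C\kappa r^2 \;\geq\; \frac{c}{|B_{2r}|}\int_{B_{2r}(x_0)} w\,\mr dx,
\]
and the left side is $\leq C\kappa r^2$ since $\inf_{B_r(x_0)} w \leq w(x_0) = 0$. Hence $\fint_{B_{2r}(x_0)} w \leq C\kappa r^2$. Then, viewing $w$ as a nonnegative subsolution, the local boundedness estimate (\cite{GT}*{Theorem 8.17}) gives
\[
    \sup_{B_r(x_0)} w \;\leq\; C\Big(\frac{1}{|B_{2r}|}\int_{B_{2r}(x_0)} w\,\mr dx + \kappa r^2\Big) \;\leq\; C\kappa r^2,
\]
which is exactly the claim. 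One technical point: the scaling of the inequalities in \cite{GT} is stated on balls of a fixed size, so I would first rescale via Lemma \ref{lem:blow up} (replacing $u_j$ by $v_j(x) = r^{-1}u_j(x_0 + rx)$), apply the estimates on the unit ball where the $\kappa$-condition becomes an $r\kappa$-condition and the right-hand side of the $w$-equation is bounded by $2r\kappa$, and then scale back; this accounts for the two factors of $r$ that produce $r^2$. The hypothesis $B_{4r}(x_0)\subset B_1(0)$ gives the room needed to run the Harnack chain on $B_{2r}$.

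The main obstacle is bookkeeping rather than conceptual: one must make sure the lower-order terms $\bm b(u_{i+1},u_i)w$, $\bm c(u_{i+1},u_i)\cdot Dw$, $d(u_{i+1},u_i)w$ genuinely have coefficients in the right Lebesgue/Morrey classes (here $L^\infty$ suffices by \eqref{eq:ms AB norm} and \eqref{eq:small C1alpha}) and that the constants in the rescaled problem stay uniform — which they do, since under Lemma \ref{lem:blow up} the ellipticity constants $\lambda,\Lambda$ are unchanged and the $C^{1,\alpha}$-seminorm of $v_j$ only decreases. A secondary subtlety is justifying the subtraction of the two weak inequalities $\mc Q u_{i+1} \leq \kappa$ and $-\mc Q u_i \leq \kappa$ to control $\mc Q u_{i+1} - \mc Q u_i$ in the distributional sense; this is routine because both are tested against the same nonnegative $\phi \in C^1_c$. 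With these in place the weak Harnack inequality finishes the argument, and the constant depends only on $n$ and $\Lambda/\lambda$ as claimed.
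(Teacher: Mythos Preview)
Your proposal is correct and follows essentially the same approach as the paper: derive the divergence-form equation for $w=u_{i+1}-u_i$ via Lemma~\ref{lem:taking difference} and the $\kappa$-condition, then apply the weak Harnack inequality \cite{GT}*{Theorems 8.17 and 8.18} using $w(x_0)=0$. The paper applies the combined Harnack inequality $\sup_{B_r} w \le C(\inf_{B_r} w + r^{2-2n/q}\|2\kappa\|_{L^{q/2}(B_{4r})})$ directly in its already-scaled form, whereas you split into the supersolution/subsolution steps and propose rescaling via Lemma~\ref{lem:blow up}; this is a cosmetic difference, not a substantive one.
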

\begin{proof}
Recall by the $\kappa$-condition in Definition \ref{def:Q operator},for $j=1,\cdots,m$,
\begin{gather*}
     -\kappa\leq -\dv \Big( \ms A(x,u_j,Du_j) \Big)+\ms B(x,u_j,Du_j) \leq \kappa.
\end{gather*}
Let $w=u_{i+1}-u_{i}$. Then by Lemma \ref{lem:taking difference},
\[
-2\kappa\leq -\dv\Big(\bm A(u_{i+1},u_i)Dw+\bm  b(u_{i+1} ,u_i)w\Big)+\bm c(u_{i+1},u_i) Dw+d(u_{i+1},u_i)w\leq 2\kappa
\]
in the weak sense. Applying weak Harnack inequality \cite{GT}*{Theorem 8.17 and Theorem 8.18} with $q=n+1$, then we have 
\[
\sup_{x\in B_r(x_0)}w\leq C (\inf_{x\in B_r(x_0)}w+r^{2-\frac{2n}{q}}\|2\kappa\|_{L^{q/2}(B_{4r}(x_0))})\leq C\kappa r^2,
\]
where $C=C(n,\Lambda/\lambda)$. Hence Lemma \ref{lem:osc of diff} is proved.
\end{proof}

Now consider a point $x$ such that $u_i(x)\neq u_{i+1}(x)$. Then one can consider the distance $d$ from $x$ to the touching set of $u_i$ and $u_{i+1}$. By the choice, in the ball $B_d(x)$, $\{u_i\}$ has at least two connected components. Then by the subsystem condition \eqref{item:subsystem}, the average of each connected components will satisfy a PDE \eqref{eq:sum form}. Moreover, the difference of the averages will also satisfy a PDE \eqref{eq:diff of two average}. Applying the $C^{1,\alpha}$ interior estimates (Theorem \ref{thm:C1alpha}), Lemma \ref{lem:osc of diff}, one can inductively obtain the following estimates of gradients of $u_i-u_j$. We remark that such a method for two sheets graphs has been used to study the obstacle problems; see  \cite{Lin}*{Theorem 3.2}.

\begin{lemma}\label{lem:gradient diff}
Let $\{u_j\}_{j=1}^m$ be ordered $C^{1,\alpha}$ graphs over $B_1(0)\subset \mb R^n$ which is a solution to $(\mc Q,\{g_j\})$. Suppose that \eqref{eq:Lambda and lambda}, \eqref{eq:ms AB norm}, \eqref{eq:small C1alpha} and \eqref{eq:eta kappa0 and kappa1} are satisfied. Let $x_0\in B_1(0)$ %\textcolor{red}{(why not $z$ as above?)} 
and $r>0$ with $u_i(x_0)=u_{i+1}(x_0)$ for some $i\in\{1,\cdots,m-1\}$ and $B_{7r}(x_0)\subset B_1(0)$. Then we have that for each $x\in B_{r}(x_0)$,
\[  |Du_i(x)-Du_{i+1}(x)|\leq C(n,m,\Lambda,\lambda)\cdot (\kappa+\kappa_0) r.\]
	\end{lemma}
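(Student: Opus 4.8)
\textbf{Proof plan for Lemma \ref{lem:gradient diff}.}

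The plan is to run an induction on the number $m$ of sheets. The base case $m=1$ is vacuous (there is no pair $u_i,u_{i+1}$), and for $m=2$ one has a genuine two-sheet system; here one argues directly. So assume the statement holds for all collections of fewer than $m$ sheets (with the same structural constants), and let $\{u_j\}_{j=1}^m$ be as in the hypothesis with $u_i(x_0)=u_{i+1}(x_0)$. Fix $x\in B_r(x_0)$ and let $d:=\dist\big(x,\{y : u_i(y)=u_{i+1}(y)\}\big)$, the distance from $x$ to the touching set of the two adjacent sheets. There are two regimes. First, if $d\geq \tfrac{1}{2}|x-x_0|$ is comparable to $r$ (more precisely if the point $x$ is "far" from the touching set relative to $r$), then $x_0$ itself witnesses that $d\le |x-x_0|\le r$, so in fact $d\le r$ always; the content is in estimating $|Du_i(x)-Du_{i+1}(x)|$ when $d$ is small. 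Pick a touching point $z$ realizing (up to a factor $2$) the distance $d$, so $B_d(x)\subset B_{2d}(z)$ and $u_i=u_{i+1}$ at $z$; since $d\le r$, the ball $B_{7d}(z)\subset B_{8r}(x_0)\subset B_1(0)$ is admissible for Lemma \ref{lem:osc of diff}.

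Inside $B_d(x)$ the sheets $u_i$ and $u_{i+1}$ do not touch, so the full collection $\{u_j\}_{j=1}^m$ splits into at least two connected components there: a "lower" group $L=\{u_{j}\}_{j\le i}$ (restricted to those that remain connected) and an "upper" group containing $u_{i+1}$. By the subsystem condition \eqref{item:subsystem}, each connected component is itself a solution to $(\mc Q,\{g'_j\})$ on $B_d(x)$ with fewer than $m$ sheets, and its average $\varphi_L$ (resp.\ $\varphi_U$) satisfies the divergence-form equation \eqref{eq:sum form} with right-hand side controlled by Lemma \ref{lem:bound bm f and g}; the height difference $\Psi=\varphi_L-\varphi_U$ satisfies \eqref{eq:diff of two average}. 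The induction hypothesis, applied to each group of fewer than $m$ sheets, gives $|Du_a-Du_b|\le C(\kappa+\kappa_0)\cdot(\text{dist to that group's touching set})$ within each group, which in particular bounds $\|u_a-u_b\|_{C^1}$ on suitable subballs by $C(\kappa+\kappa_0)d$; combined with Lemma \ref{lem:osc of diff} (giving $|u_i-u_{i+1}|\le C\kappa\rho^2$ on $B_\rho(z)$ for $\rho$ up to $\sim d$, hence $\|\varphi_L-\varphi_U\|_{C^0(B_{d/2}(x))}\le C\kappa d^2$), one controls all the inhomogeneous terms in \eqref{eq:diff of two average} by $C(\kappa^2+\kappa_0^2+\kappa\kappa_0)d$ in the $C^0$ (or appropriate $L^\infty$/Hölder) norm, up to lower-order factors of the small $C^{1,\alpha}$ data $\delta_0$. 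Then the interior $C^{1,\alpha}$ estimate Theorem \ref{thm:C1alpha} applied to $\Psi$ on $B_{d/2}(x)$ yields
\[
    |D\Psi(x)| \le \frac{C}{d}\,\|\Psi\|_{C^0(B_{d/2}(x))} + C\,d\cdot(\text{size of RHS}) \le C(\kappa+\kappa_0)\,d.
\]
Finally, $Du_i-Du_{i+1}$ at $x$ is recovered from $D\Psi(x)=D\varphi_L(x)-D\varphi_U(x)$ together with the within-group bounds $|Du_a(x)-D\varphi_L(x)|$, $|Du_b(x)-D\varphi_U(x)|\le C(\kappa+\kappa_0)d$ supplied by the induction hypothesis (each sheet is $C^1$-close to its group average at scale $d$), so $|Du_i(x)-Du_{i+1}(x)|\le C(\kappa+\kappa_0)\,d\le C(\kappa+\kappa_0)\,r$, which is the claim.

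The main obstacle is the bookkeeping of the induction: one must make sure that when $B_d(x)$ splits the system, \emph{every} relevant pair of adjacent sheets (both within $L$, within $U$, and the critical pair $u_i,u_{i+1}$) is handled, and that the distance-to-touching-set quantities used in the induction hypothesis for the subgroups are themselves $\le d$ (which holds because the subgroup's own touching set, when nonempty, lies inside $B_d(x)$ by connectedness, and when a subgroup is a single sheet the bound is vacuous). A secondary technical point is the choice of exponents/norms so that Theorem \ref{thm:C1alpha} applies with the right-hand side of \eqref{eq:diff of two average} written in the divergence form $\dv\,\mf f + g$ with $\mf f$ Hölder and $g\in L^\infty$ of the stated sizes — this is exactly the structure produced by Lemma \ref{lem:bound bm f and g}, so the estimate closes. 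Tracking the power of $r$ carefully (each application of an interior estimate on a ball of radius $\sim d$ converts an $O(d^2)$ height bound into an $O(d)$ gradient bound, and never worse) guarantees the final linear-in-$r$ conclusion with no loss across the induction.
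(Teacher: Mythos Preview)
Your plan follows the same inductive strategy as the paper: split at scale $d=\dist(x,\{u_i=u_{i+1}\})$, use the induction hypothesis on each subgroup to control the within-group $C^1$ differences, and then apply the $C^{1,\alpha}$ estimate (Theorem \ref{thm:C1alpha}) to the equation \eqref{eq:diff of two average} for $\Psi=\varphi_L-\varphi_U$.

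There is, however, a genuine gap in how you invoke the induction. The induction hypothesis is the statement of the lemma itself: for a pair $(u_a,u_{a+1})$ in a subgroup, one needs a touching point $x_0'$ with $B_{7r'}(x_0')$ contained in the domain of that subsystem, and the evaluation point in $B_{r'}(x_0')$. The subgroup is a subsystem only on $B_d(x)$, and its touching points are merely known to lie \emph{somewhere} in $B_d(x)$. If a touching point sits at distance more than $d/8$ from $x$, the containment $B_{7r'}(x_0')\subset B_d(x)$ fails before $B_{r'}(x_0')$ reaches $x$; and you need the within-group gradient bound not just at $x$ but on all of a subball (say $B_{d/2}(x)$) in order to control the right-hand side of \eqref{eq:diff of two average} via Lemma \ref{lem:bound bm f and g}. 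Your remark that ``the subgroup's own touching set lies inside $B_d(x)$'' is correct but insufficient for exactly this reason.

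The paper closes this by inserting Lemma \ref{lem:large gap}. After blowing up so that $v_i<v_{i+1}$ on $B_1(0)$, it selects $r_1\in(9^{-m-1},9^{-1})$ for which the component structure of $\{v_j\}$ is the same in $B_{r_1}(0)$ and $B_{9r_1}(0)$. Each subgroup connected in $B_{r_1}(0)$ then has all its adjacent touching points inside $B_{r_1}(0)$ while being a subsystem on the much larger ball, so the induction applies with $x_0'=x_j^*\in B_{r_1}(0)$ and $r'\sim r_1$, giving the within-group gradient bound \emph{uniformly on $B_{r_1}(0)$}. With this in hand, Lemma \ref{lem:bound bm f and g} controls the right-hand side of \eqref{eq:diff of two average} on $B_{r_1}(0)$ and Theorem \ref{thm:C1alpha} finishes. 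You should add this scale-selection step (or an equivalent device) to make your induction close.
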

\begin{proof}
We prove the lemma inductively.	So long as $m=1$, then there is nothing to be proved. Suppose that the statements in the lemma hold true for the number of the sheets less than or equal to $m-1$. Now let $\{u_j\}_{j=1}^{m}$ be ordered graphs which is a solution to $(\mc Q,\{g_j\})$. Without loss of generality, we assume that $\{u_j\}_{j=1}^{m}$ is connected in $B_1(0)$.

Now fix $y\in B_r(x_0)$ and let $\wti r:=\dist(y,\{u_i=u_{i+1}\})$. Clearly, there exists 
\[ 
    x_i^*\in\{u_i=u_{i+1}\}\cap B_{2r}(x_0) \text{  with } \wti r=|y-x_i^*|.\]
So long as $\wti r=0$, then $Du_i(y)=Du_{i+1}(y)$. We now consider the case of $\wti r>0$. Since $x_0\in \{u_i=u_{i+1}\}$, then it follows that $\wti r<r$ and $B_{5\wti r}(x_i^*)\subset B_{7r}(x_0)\subset B_1(0)$. By the choice of $\wti r$, we know that $u_i<u_{i+1}$ in $B_{\wti r}(y)$. By applying Lemma \ref{lem:osc of diff} in $B_{5\wti r}(x_i^*)$ for $u_i$ and $u_{i+1}$, we have that 
\begin{equation}\label{eq:ui and ui+1 before scaling}
    |u_i(x)-u_{i+1}(x)|\leq C\kappa \wti r^2, \quad \forall\, x\in B_{\frac{5}{4}\wti r}(x_i^*)\supset B_{\frac{1}{4}\wti r}(y),
\end{equation}
where $C=C(n,\Lambda/\lambda)$.
Now we define new functions $v_1,\cdots,v_{m}:B_1(0)\to \mb R$ by
\[  v_j(x):=\wti r^{-1}u_j(\wti rx+y).\]
Since $u_{i+1}>u_i$ in $y\in B_{\wti r}(y)$, then $v_{i+1}>v_{i}$ in $B_1(0)$. Define $\wti {\mc Q}$ by 
\[  \wti {\mc Q}v=-\dv \wti {\ms A}(x,v,Dv)+\wti{\ms B}(x,v,Dv),\]
where 
\[  
\wti {\ms A}(x,z,\bm p):=\ms A(y+rx,rz,\bm p),\quad \wti {\ms B}(x,z ,\bm p):=r\ms B(y+rx,rz,\bm p).
\]
Define the new functions $\{\wti g_j\}$ by
\[ \wti g_j(x)=rg_j(y+rx).\] 
By Lemma \ref{lem:blow up}, $\{v_j\}$ is a solution to $(\wti{\mc Q},\{\wti g_j\})$. Moreover, by the subsystem condition, $\{v_j\}_{j=i+1}^{m}$ and $\{v_j\}_{j=1}^i$ are solutions to $(\wti {\mc Q},\{\wti g_j\}_{j={i+1}}^m)$ and $(\wti{\mc Q},\{\wti g_j\}_{j=1}^i)$, respectively.

By Lemma \ref{lem:large gap}, there exists $r_1\in (9^{-m-1},9^{-1})$ such that $\{v_j\}$ has the same number of connected components in $B_{9r_1}(0)$ and $B_{r_1}(0)$. For simplicity, we assume that $\{v_j\}_{j=1}^i$ and $\{v_j\}_{j=i+1}^m$ are connected components in $B_{r_1}(0)$. Observe that for $j=1,\cdots,m$, 
\[ -\kappa\wti r\leq \wti{\mc Q}v_j\leq \kappa \wti r. \]
That is, $\{v_j\}$ satisfies $\kappa\wti r$-condition. Since $\{v_j\}_{j=1}^i$ and $\{v_j\}_{j=i+1}^m$ are connected in $B_{r_1}(0)$, then for any $j=1,\cdots,i-1$ or $i+1,\cdots m-1$, there exists $x_j^*\in B_{r_1}(0)$ so that $v_j(x_j^*)=v_{j+1}(x_j^*)$. Applying Lemma \ref{lem:osc of diff} in $B_{8r_1}(x_j^*)\subset B_{9r_1}(0)\subset B_1(0)$, we have that for all $x\in B_{2r_1}(x_j^*) \supset B_{r_1}(0)$,
\begin{equation}\label{eq:diff order except j}   
    |v_j(x)-v_{j+1}(x)|\leq C(n,\Lambda/\lambda)\kappa\wti r, \quad j=1,\cdots,i-1, i+1,\cdots, m-1.
    \end{equation}
On the other hand, by \eqref{eq:ui and ui+1 before scaling},
\[
 |v_i(x)-v_{i+1}(x)|\leq C\kappa \wti r, \quad \forall x\in B_{\frac{1}{4}}(0)\supset B_{r_1}(0).
\]
Together with \eqref{eq:diff order except j}, we conclude that for all $x \in B_{r_1}(0)$ and $j=1,\cdots,m$,
\begin{equation}\label{eq:diff of vj and vj+1}
    |v_j(x)-v_{j+1}(x)|\leq C(n,\Lambda/\lambda,m)\kappa\wti r.
\end{equation}
By induction, there exists a constant $C=C(n,m,\Lambda,\lambda)$ so that for all $x\in B_{r_1}(0)$,
\begin{equation}\label{eq:diff derivative by induction}
     |Dv_j(x)-Dv_{j+1}(x)|\leq C \cdot (\kappa+\kappa_0)\wti r 
\end{equation}  
for $j=1,\cdots, i-1,i+1,\cdots,m$. Let 
\[
\varphi:=\frac{1}{i}\sum_{j=1}^iv_j,\quad \psi:=\frac{1}{m-i}\sum_{j=i+1}^mv_j,\quad \Psi:=\varphi-\psi.
\]
Then by \eqref{eq:diff of two average}, $\Psi$ satisfies the equation
\begin{equation}\label{eq:diff of two solution}
    \begin{split}
    -\dv \Big(\bm A(\varphi,\psi)D\Psi+\bm b(\varphi,\psi)\Psi\Big)+\bm c(\varphi,\psi) D\Psi+d(\varphi,\psi)\Psi =\dv {\bm f^*}-g^*,
    \end{split}
\end{equation}
where $\bm A,\bm b,\bm c$ and $d$ are defined with respect to $\wti{\mc Q}$; and by Lemma \ref{lem:bound bm f and g}, Lemma \ref{lem:blow up}, Equation \eqref{eq:diff of vj and vj+1} and \eqref{eq:diff derivative by induction},
\begin{gather*}
|\bm f^*|\leq C(n,m,\Lambda,\lambda)(\kappa+\kappa_0) \wti r ,\quad  \sup_{x,x'\in B_{r_1}(0)}\frac{|\bm f^*(x)-\bm f^*(x')|}{|x-x'|^\alpha}\leq C(n,m,\Lambda,\lambda)(\kappa+\kappa_0) \wti r;\\
|g^*|\leq C(n,m,\Lambda,\lambda)(\kappa +\kappa_0) \wti r.
\end{gather*}
Then the $C^{1,\alpha}$-estimates Theorem \ref{thm:C1alpha} give that for $x\in B_{r_1/2}(0)$,
\begin{align*} 
	|D\Psi(x)| &\leq C(n,\alpha)(\|\Psi\|_{C^0(B_{r_1}(0))}+\|g^*\|_{C^0(B_{r_1}(0))}+\|\bm f^*\|_{C^\alpha(B_{r_1}(0))})\\
	&\leq C(n,m,\Lambda,\lambda)(\kappa+\kappa_0)\wti r.
\end{align*}
Together with \eqref{eq:diff derivative by induction}, we then conclude that
\[ |Du_{i+1}(y)-Du_i(y)| = |Dv_{i+1}(0)-Dv_i(0)| \leq C(n,m,\Lambda,\lambda) (\kappa+\kappa_0)\wti r. \]
This completes the induction. By the arbitrariness of $y$, we have proved Lemma \ref{lem:gradient diff}.
	\end{proof}

%%%%%%%%%%%%%%%%%%%%%%%%%%%%%%%%%%%%%%%%%%%%%%%%%%%%
\subsection{Integral estimates of the second derivatives}
\label{subsec:W22 estiamtes}
Recall that for any function $f$ and a constant $\tau\neq 0$, 
\[ 
    f^\tau(x):=\frac{f(x+\tau \bm e)-f(x)}{\tau},\]
where $\bm e$ can be any unit vector in $\mb R^n$. 
\begin{lemma}\label{lem:diff to sum}
Let $\{u_j\}_{j=1}^m$ be $C^{1,\alpha}$ ordered, connected graphs over $B_1(0)\subset \mb R^n$ which is a solution to $(\mc Q,\{g_j\})$. 
Suppose that \eqref{eq:Lambda and lambda}, \eqref{eq:ms AB norm}, \eqref{eq:ms AB at x00}, \eqref{eq:small C1alpha} and \eqref{eq:eta kappa0 and kappa1} are satisfied.
Denote by
\[   \eta:=\Big(\|\partial_z\partial_x\ms A\|_{C^0(\bm U_1)}+\|\partial_x\ms B\|_{C^1(\bm U_1)}\Big)\cdot \sum_{j=1}^m\|u_j\|_{C^0(B_{1}(0))}.\]
Suppose that there exists a constant $K>0$ such that for all $1\leq i\leq m$ and $\tau\in(0,1/4)$,
\[ 
    \int_{B_{1-\tau}(0)} |Du_i^\tau-Du_j^\tau|^2\, \mr dx\leq K.    \]
Then the average 
$\varphi=\frac{1}{m}\sum_ju_j$ satisfies
\[ 
    \int_{B_{1/2}(0)}|D\varphi^\tau|^2
    \leq C(n,m,\Lambda,\lambda)\Big(\delta_0^2(K+1)+\eta^2+\kappa_1\delta_0\Big).\]	
	\end{lemma}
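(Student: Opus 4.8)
The plan is to test the difference-quotient equation \eqref{eq:differential quotient sum} for $\varphi^\tau$ against a cutoff times $\varphi^\tau$ itself and estimate each term using the uniform ellipticity \eqref{eq:Lambda and lambda}, the smallness \eqref{eq:small C1alpha}, and the quantitative bounds on $\bm f_\tau$ and $g_\tau$ from Lemma \ref{lem:bound ftau and gtau}. First I would fix a cutoff function $\chi\in C_c^\infty(B_{3/4}(0))$ with $\chi\equiv 1$ on $B_{1/2}(0)$, $0\le\chi\le 1$, $|D\chi|\le C(n)$, and use $\phi=\chi^2\varphi^\tau$ as a test function in the weak form of \eqref{eq:differential quotient sum}. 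The leading term $\sum_j\int \bm A_\tau(u_j)D\varphi^\tau\cdot D(\chi^2\varphi^\tau)$ splits into $\int (\sum_j\bm A_\tau(u_j))\,\chi^2|D\varphi^\tau|^2$ plus a cross term $2\int(\sum_j\bm A_\tau(u_j))\chi D\chi\,\varphi^\tau D\varphi^\tau$; by \eqref{eq:Lambda and lambda} the quadratic form $\sum_j\bm A_\tau(u_j)$ is bounded below by $m\lambda\,\mathrm{Id}$ (each $\bm A_\tau(u_j)$ is an average over $t$ of $\partial_{\bm p}\ms A$ evaluated on $\bm U_2$, hence $\ge\lambda\,\mathrm{Id}$), so the first piece gives a coercive contribution $m\lambda\int\chi^2|D\varphi^\tau|^2$.

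Next I would bound all remaining terms and absorb. The cross term is handled by Cauchy–Schwarz with a small parameter $\epsilon$, producing $\epsilon\int\chi^2|D\varphi^\tau|^2 + C_\epsilon\int|D\chi|^2|\varphi^\tau|^2$; the lower-order terms $\sum_j\int \bm b_\tau(u_j)\varphi^\tau D(\chi^2\varphi^\tau)$, $\sum_j\int\bm c_\tau(u_j)D\varphi^\tau\,\chi^2\varphi^\tau$, $\sum_j\int d_\tau(u_j)\varphi^\tau\chi^2\varphi^\tau$ are all controlled similarly, each contributing at worst $\epsilon\int\chi^2|D\varphi^\tau|^2$ plus $C_\epsilon\int_{B_{3/4}}|\varphi^\tau|^2$, and $\|\varphi^\tau\|_{L^2(B_{3/4})}\le C\delta_0$ since $|D\varphi|\le\delta_0$ by \eqref{eq:small C1alpha} forces $|\varphi^\tau|\le\delta_0$. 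For the right-hand side, $\int \bm f_\tau\cdot D(\chi^2\varphi^\tau)$ and $\int g_\tau\,\chi^2\varphi^\tau$: by Lemma \ref{lem:bound ftau and gtau}, $|\bm f_\tau|\le C\sum_j\big(\|u_j-\varphi\|_{C^1}|D\varphi^\tau-Du_j^\tau|+\|u_j-\varphi\|_{C^1}|\varphi^\tau-u_j^\tau|+\|\partial_z\partial_x\ms A\|_{C^0}|u_j|+\|\partial_{\bm p}\partial_x\ms A\|_{C^0}|Du_j|\big)$ and an analogous bound for $|g_\tau|$ with $\|\partial_x\ms B\|_{C^1}\|u_j\|_{C^1}$ and $|Dg_j|$. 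Using $\|u_j-\varphi\|_{C^1}\le C\delta_0$ (a consequence of \eqref{eq:small C1alpha} and $\varphi$ being an average) and $\int|D\varphi^\tau-Du_j^\tau|^2\le CK$ (the hypothesis, after noting $D\varphi^\tau-Du_j^\tau$ is a linear combination of the $Du_i^\tau-Du_j^\tau$), Cauchy–Schwarz turns $\int\bm f_\tau\cdot D(\chi^2\varphi^\tau)$ into $\epsilon\int\chi^2|D\varphi^\tau|^2+C_\epsilon\big(\delta_0^2(K+1)+\eta^2\big)$, where the $\eta^2$ collects the $|u_j|$-type terms via the definition of $\eta$; the piece of $g_\tau$ carrying $|Dg_j^\tau|\le|Dg_j|_{C^0}\le\kappa_1$ pairs against $\chi^2\varphi^\tau$ (size $\lesssim\delta_0$) to give the $\kappa_1\delta_0$ contribution. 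Choosing $\epsilon$ small relative to $m\lambda$ and absorbing all $\epsilon\int\chi^2|D\varphi^\tau|^2$ terms to the left yields $\int_{B_{1/2}}|D\varphi^\tau|^2\le C(n,m,\Lambda,\lambda)\big(\delta_0^2(K+1)+\eta^2+\kappa_1\delta_0\big)$, as claimed.

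The main obstacle I anticipate is bookkeeping the right-hand side carefully: specifically, keeping the $\delta_0^2 K$ term linear in $K$ (so that this lemma can later feed an induction on difference quotients) rather than picking up a $K^2$, which requires that every appearance of $|D\varphi^\tau-Du_j^\tau|$ be paired in $L^2$ against the bounded-$C^1$ factor $\|u_j-\varphi\|_{C^1}\lesssim\delta_0$ and never against $|D\varphi^\tau|$ itself, and being attentive that the $\mk A_\tau(u_j)$ and $\mk B_\tau(u_j)$ remainders — which do \emph{not} carry a $\delta_0$ gain — are precisely the terms bundled into $\eta$ via \eqref{eq:bound mk A tau j} and \eqref{eq:mk B tau bound}. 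A minor technical point is justifying the integrations by parts and the use of $\chi^2\varphi^\tau$ as an admissible test function; since $u_j\in C^{1,\alpha}$ the difference quotients $\varphi^\tau$ are Lipschitz, hence $W^{1,\infty}$, and $\chi^2\varphi^\tau\in W^{1,\infty}_c(B_1(0))$ is legitimate after the usual approximation.
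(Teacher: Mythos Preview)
Your proposal is correct and follows essentially the same approach as the paper: test \eqref{eq:differential quotient sum} against a cutoff times $\varphi^\tau$, use \eqref{eq:Lambda and lambda} for coercivity, control $\bm f_\tau$ and $g_\tau$ via Lemma \ref{lem:bound ftau and gtau} together with $\|u_j-\varphi\|_{C^1}\le C\delta_0$ and $|\varphi^\tau|\le\delta_0$, and absorb. One small point of justification: the $C^0$ part of $\|u_j-\varphi\|_{C^1}\le C\delta_0$ does not follow from \eqref{eq:small C1alpha} and averaging alone---you need the connectedness hypothesis to find touching points $x_j$ with $u_j(x_j)=u_{j+1}(x_j)$ and then integrate the gradient bound, as the paper does explicitly.
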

\begin{proof}
By \eqref{eq:differential quotient sum}, in $B_{3/4}(0)$,
\begin{align}\label{eq:varphi tau pde}
    \sum_{j=1}^m\Big[  -\dv \Big( {\bm A}_\tau(u_j)D\varphi^\tau 
        + {\bm b}_\tau(u_j)\varphi^\tau \Big)
        + \bm c_\tau(u_j)D\varphi^\tau 
        +d_\tau(u_j) \varphi^\tau \Big] 
    =-\dv\bm f_\tau+g_\tau,  
	 \end{align}
where $\bm f_\tau$ and $g_\tau$ are given by \eqref{eq:f tau} and \eqref{eq:g tau}. 
By \eqref{eq:Lambda and lambda} and \eqref{eq:eta kappa0 and kappa1}, we have
\begin{equation}\label{eq:bound Abcd tau} 
    \lambda \leq \bm A_\tau\leq\Lambda  , \quad 
    |\bm b_\tau| + |\bm c_\tau|+|d_\tau|\leq 3\Lambda.  
\end{equation}
Moreover, by \eqref{eq:small C1alpha},
\begin{equation}\label{eq:bound utau}
    |u_j^\tau| \leq \sup|Du_j|\leq \delta_0.
\end{equation}
Since $\{u_j\}$ is connected, then for all $j=1,\cdots,m-1$, there exists $x_j\in B_1(0)$ such that $u_j(x_j)=u_{j+1}(x_j)$, which yields that for all $x\in B_1(0)$,
\begin{align*}
    |u_j(x)-u_{j+1}(x)|
    &\leq |u_j(x)-u_{j}(x_j)|+|u_{j+1}(x_j)-u_{j+1}(x)|\\
    &\leq 2\delta_0|x-x_j|\leq 4\delta_0.
\end{align*}
By \eqref{eq:small C1alpha},
\[  |Du_j(x)-Du_{j+1}(x)|\leq 2\delta_0.\]
By the arbitrariness of $j$, we conclude that 
\[  \|u_j-\varphi\|_{C^1(B_1(0))}\leq C(m)\delta_0.\]
This together with Lemma \ref{lem:bound ftau and gtau} implies
\begin{gather}
\begin{split}\label{eq:bound ftau} 
    |\bm f_\tau|
    \leq C(n,m,\Lambda)\sum_{j=1}^m\Big(\delta_0 |D\varphi^\tau-Du_j^\tau|+\eta+\delta_0\Big);
\end{split}\\
\begin{split}\label{eq:bound gtau}
    |g_\tau|
    \leq C\sum_{j=1}^m\Big(\delta_0 |D\varphi^\tau-Du_j^\tau|+\delta_0+\eta+\kappa_1\Big);
\end{split}
\end{gather}

Now let $\phi= \varphi^\tau\cdot \zeta^2$, where $\zeta\in C_c^1(B_{3/4}(0))$ is a cut-off function so that 
\begin{equation}\label{eq:cut off zeta}
    |D \zeta|\leq C; \quad 
    0\leq \zeta\leq 1; \quad 
    \zeta\equiv 1 \text { on } B_{1/2}(0).
\end{equation}
Then by taking $\phi$ as a test function in \eqref{eq:varphi tau pde} and using \eqref{eq:bound Abcd tau}, we have
\begin{align*}
    \int_{B_{3/4}(0)}|D\varphi^\tau|^2\zeta^2\,\mr dx
    &\leq C\int_{B_{3/4}(0)} \Big(|D\varphi^\tau||\varphi^\tau||D\zeta^2| 
          +|\varphi^\tau||D(\varphi^\tau\zeta^2)| +|D\varphi^\tau||\varphi^\tau|\zeta^2
          +|\varphi^\tau|^2\zeta^2+\\
          &\hspace{12em}+|\bm f_\tau|(|D\varphi^\tau|\zeta^2+|\varphi^\tau||D\zeta^2|)+|g_\tau||\varphi^\tau|\zeta^2\Big)\,\mr dx\\
    &\leq \frac{1}{2}\int_{B_{3/4}(0)} |D\varphi^\tau|^2\zeta^2\,\mr dx 
          +C\int_{B_{3/4}(0)}\Big(|\varphi^\tau|^2|D\zeta|^2 +|\varphi^\tau|^2\zeta^2+|\bm f_\tau|^2\zeta^2\\
          &\hspace{24.5em}+|g_\tau||\varphi^\tau|\zeta^2\Big)\,\mr dx.
\end{align*}
where $C=C(m,\Lambda,\lambda)$. Plugging \eqref{eq:bound utau}, \eqref{eq:bound ftau}, \eqref{eq:bound gtau} and \eqref{eq:cut off zeta} into the last inequality, we obtain
\begin{align*}
    \int_{B_{3/4}(0)}|D\varphi^\tau|^2\zeta^2\,\mr dx
        &\leq C\Big(\sum_{j=1}^m\int_{B_{3/4}(0)}\delta_0^2 +\delta_0^2|D\varphi^\tau-Du_j^\tau|^2\,\mr dx
        +\eta^2+\kappa_1\delta_0\Big)\\
        &\leq C\Big(\delta_0^2(K+1)+\eta^2+\kappa_1\delta_0\Big),
	\end{align*}
where $C=C(n,m,\Lambda,\lambda)$. This finishes the proof of Lemma \ref{lem:diff to sum}.
	\end{proof}

Now we are ready to prove the hessian estimates for height differences. We will use the inductive method on the number of sheets $m$. For the point $x$ which has density less than $m$, we will take $r_1$ to be the largest radius so that $\{u_j\}$ is not connected in $B_{r_1}(x)$. Then by induction, we can prove the integral estimates over $B_{c(m)r_1}(x)$, where $c(m)\in(0,1)$ is a constant depending only on $m$; Lemma \ref{lem:diff to sum} will be used in this part. Moreover, for the touching points, we will use Lemma \ref{lem:gradient diff} to find such a small radius with desired integral bound. Finally, we will finish the induction by taking a covering argument.

\begin{lemma}\label{lem:diff 22}
Let $\{u_j\}_{j=1}^m$ be $C^{1,\alpha}$ ordered graphs over $B_1(0)\subset \mb R^n$ which is a solution to $(\mc Q,\{g_j\})$. 
Suppose that \eqref{eq:Lambda and lambda}, \eqref{eq:ms AB norm}, \eqref{eq:ms AB at x00}, \eqref{eq:small C1alpha} and \eqref{eq:eta kappa0 and kappa1} are satisfied. 
Denote by
\[   \eta:=\Big(\|\partial_z\partial_x\ms A\|_{C^0(\bm U_1)}+\|\partial_x\ms B\|_{C^1(\bm U_1)}\Big)\cdot \sum_{j=1}^m\|u_j\|_{C^0(B_{1}(0))}.\]
If in addition that $\{u_j\}$ is connected in $B_{1/16}(0)$, then for all $\tau\in(0,1/32)$ and $j=1,\cdots,m-1$,
\[ 
    \int_{B_{1/32}(0)}|Du_j^\tau-Du_{j+1}^\tau|^2\,\mr dx
        \leq C\cdot \Big((1+\eta^2)(\kappa+\kappa_0)^2+\kappa_1^2\Big),   \]
where $C=C(n,m,\Lambda,\lambda)$.
This gives that 
$u_i-u_j\in W^{2,2}(B_{1}(0))$ and 
\[ 
    \int_{B_{1/32}(0)}|D^2(u_i-u_j)|^2\,\mr dx
    \leq C\cdot \Big((1+\eta^2)(\kappa+\kappa_0)^2+\kappa_1^2\Big).     \]
As a corollary, if $\{u_j\}$ is connected in $B_s(0)$ and $s\leq 1/32$, then 
\[ 
    \int_{B_{s}(0)}|D^2(u_i-u_j)|^2\,\mr dx
    \leq C\cdot \Big((1+\eta^2)(\kappa+\kappa_0)^2+\kappa_1^2\Big)s^n.  
        \]
\end{lemma}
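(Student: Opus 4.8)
The plan is to argue by induction on the number of sheets $m$, exactly as outlined in the paragraph preceding the statement. When $m=1$ there is nothing to prove (the height difference is absent), so assume the estimate holds for collections of at most $m-1$ ordered graphs, each controlled by the corresponding constant. Now take $\{u_j\}_{j=1}^m$ connected in $B_{1/16}(0)$ and fix $\tau\in(0,1/32)$ and $\bm e$ a unit vector. By a covering argument it suffices to produce, for each $y\in B_{1/32}(0)$, a ball $B_{\hat r}(y)$ (with $\hat r\geq c(m)>0$ bounded below) on which
\[
    \frac{1}{|B_{\hat r}|}\int_{B_{\hat r}(y)}|Du_i^\tau-Du_{i+1}^\tau|^2\,\mr dx
    \leq C\big((1+\eta^2)(\kappa+\kappa_0)^2+\kappa_1^2\big)
\]
holds for all $i$; then a Vitali-type covering of $B_{1/32}(0)$ by finitely many such balls (the number depending only on $n,m$) yields the claimed bound. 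There are two cases for $y$.

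\emph{Low-density case:} if $\{u_j\}$ is \emph{not} connected in some small ball around $y$, let $r_1\in[c_1(m),c_2(m)]$ be chosen via Lemma \ref{lem:large gap} so that $\{u_j\}$ has the same number of connected components in $B_{r_1}(y)$ and $B_{tr_1}(y)$ for a fixed small $t=t(m)$. On $B_{r_1}(y)$ the collection splits into connected components, each of which is a solution to $(\mc Q,\{g'_j\})$ with strictly fewer than $m$ sheets by the subsystem condition \eqref{item:subsystem}; by the inductive hypothesis (after the rescaling of Lemma \ref{lem:blow up} and Remark \ref{rmk:blow up AB bound}), the difference quotients of sheets within the same component are controlled in $L^2$ on a slightly smaller ball. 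For consecutive sheets $u_i,u_{i+1}$ lying in \emph{different} components, their average difference $\Psi$ satisfies the good divergence-form equation \eqref{eq:diff of two average}, whose right-hand side is controlled by Lemma \ref{lem:bound bm f and g} together with the within-component bounds just obtained; one then applies an $L^2$ difference-quotient estimate to $\Psi$ (as in Lemma \ref{lem:diff to sum} applied to each component, plus the Caccioppoli estimate for $\Psi^\tau$) to close the bound. This is estimate \eqref{eq:hat r inequality at low density} referred to in the introduction.

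\emph{Touching (density-$m$) case:} if $\{u_j\}$ is connected in every small ball around $y$, then in particular for each $i$ there is a touching point $x_i$ of $u_i,u_{i+1}$ within distance $O(1)$ of $y$, so Lemma \ref{lem:gradient diff} gives $|Du_i-Du_{i+1}|\leq C(\kappa+\kappa_0)r$ on a fixed-size ball; differencing this at scale $\tau$ gives $|Du_i^\tau-Du_{i+1}^\tau|\leq C(\kappa+\kappa_0)$ pointwise on $B_{\hat r}(y)$, which is far stronger than the required $L^2$ average bound. (Here one also uses Lemma \ref{lem:diff to sum} to promote the $C^1$ control of $u_i-u_j$, via the connectedness, into the $L^2$ control of $D\varphi^\tau$ that feeds back into the difference-quotient equation \eqref{eq:differential quotient sum}; this is estimate \eqref{eq:hat r inequality at touching}.) Combining the two cases and covering $B_{1/32}(0)$ completes the induction, giving the uniform bound on $\int_{B_{1/32}(0)}|Du_j^\tau-Du_{j+1}^\tau|^2$. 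Letting $\tau\to 0$ along a sequence and using the standard characterization of $W^{1,2}$ via bounded difference quotients (applied to $D(u_j-u_{j+1})$) yields $u_i-u_j\in W^{2,2}(B_1(0))$ with the stated Hessian bound; the final rescaled corollary for $B_s(0)$, $s\leq 1/32$, follows by applying the result to $v_j(x)=s^{-1}u_j(sx)$ via Lemma \ref{lem:blow up} and Remark \ref{rmk:blow up AB bound}, which only improves the constants.

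The main obstacle I expect is the \emph{low-density case}: making the induction genuinely go through requires carefully tracking how the connected-component decomposition behaves under the rescaling $v_j(x)=r_1^{-1}u_j(y+r_1x)$, ensuring the rescaled data ($\kappa r_1$, $\eta$, $\kappa_0 r_1$, $\kappa_1 r_1^2$) stay within the inductive hypothesis, and then patching the within-component difference-quotient bounds with the cross-component estimate for $\Psi$ coming from \eqref{eq:diff of two average} — in particular verifying that the right-hand side $\bm f$, $g$ terms there are quadratically small in the height differences (via Lemma \ref{lem:bound bm f and g}) so that the Caccioppoli argument does not lose the $(\kappa+\kappa_0)^2$ scaling. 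The touching case, by contrast, is essentially immediate once Lemma \ref{lem:gradient diff} and Lemma \ref{lem:diff to sum} are in hand.
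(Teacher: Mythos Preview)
Your overall strategy---induction on $m$, a dichotomy at each $y\in B_{1/32}(0)$ into a low-density case (the system disconnects near $y$) versus a touching case ($y\in\{u_1=u_m\}$), and a covering argument to glue the local estimates---is exactly the paper's approach. Two points, however, need correction.

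First, the radii $\hat r(y)$ are \emph{not} bounded below by any constant $c(m)>0$. In the low-density case $\hat r(y)$ is a fixed fraction of $r_1(y)$, the largest radius for which $\{u_j\}$ is disconnected in $B_{r_1}(y)$, and $r_1(y)\to 0$ as $y$ approaches $\{u_1=u_m\}$; in the touching case the paper takes $\hat r(y)=\tau$. Consequently you cannot cover $B_{1/32}(0)$ by a number of balls depending only on $n,m$. The paper instead uses a Vitali-type cover: pick finitely many $B_{\hat r(y_k)}(y_k)$ covering $B_{1/32}(0)$ with the half-radius balls pairwise disjoint, so $\sum_k \hat r(y_k)^n\leq C(n)$. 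Since each local estimate has the form $\int_{B_{\hat r(y)}(y)}|Du_i^\tau-Du_j^\tau|^2\leq C\,\hat r(y)^n$ (note the $\hat r^n$, not merely a bounded average), summing over $k$ gives the global bound.

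Second, you have misplaced Lemma~\ref{lem:diff to sum}. The touching case is purely pointwise and needs no PDE: if $y\in\{u_1=u_m\}$ then $y$ is a touching point for every consecutive pair, so for $x\in B_\tau(y)$ both $x$ and $x+\tau\bm e$ lie within distance $2\tau$ of $\{u_i=u_{i+1}\}$, and Lemma~\ref{lem:gradient diff} alone gives $|Du_i^\tau(x)-Du_j^\tau(x)|\leq C(\kappa+\kappa_0)$. Where Lemma~\ref{lem:diff to sum} is actually required is in the \emph{low-density} case: after rescaling and splitting into connected components with averages $\varphi_j$, the Caccioppoli estimate for $\Psi^\tau=(\varphi_j-\varphi_k)^\tau$ produces a right-hand side containing $\int|D\varphi_k^\tau|^2$ (via the term $(\frac{1}{i_j}\bm A_\tau^j-\frac{1}{i_k}\bm A_\tau^k)D\varphi_k^\tau$ in $\oli{\bm f}$), and it is Lemma~\ref{lem:diff to sum}, fed with the inductive bound on $\int|Dv_{j,\ell}^\tau-D\varphi_j^\tau|^2$, that controls this term and closes the estimate.
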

\begin{proof}
We are going to prove the lemma by induction. 
So long as $m=1$, then there is nothing to be proved.
Suppose that the lemma is true when the number of the sheets is less than or equal to $m-1$. 
Now let $\{u_j\}_{j=1}^{m}$ be ordered graphs which is a solution to $(\mc Q,\{g_j\})$. 
We further assume that $\{u_j\}$ is connected in $B_{1/16}(0)$. 
For any $x\in B_1(0)$, let 
\[ 
    r_1(x):=\sup\left\{ r'<1-|x|; \{u_j\} 
        \text{ is not connected in $B_{r'}(x)$}\right\}.  \] 
Then $r_1(x_0)>0$ if $u_1(x_0)\neq u_m(x_0)$ for $x_0\in B_1(0)$. 

\medskip
{\noindent\em Step 1: Decomposing $\{u_j\}$ into connected components in small balls.}

\medskip
Note that by Lemma \ref{lem:large gap}, there exists 
$r(x)\in (16^{-m}r_1(x),r_1(x))$ 
so that $\{u_j\}$ has same number of connected components in $B_{ r(x)}(x)$ and $B_{r(x)/16}(x)$. 
Since $\{u_j\}$ is connected in $B_{1/16}(0)$, then for each $i=1,\cdots,m-1$, there exists 
$x_i\in B_{1/16}(0)$ with $u_i(x_i)=u_{i+1}(x_i)$. 
Clearly, 
$B_{1/8}(0)\subset B_{3/16}(x_i)$ and $B_{3/4}(x_i)\subset B_1(0)$. 
In particular, for all $x\in B_{1/16}(0)$, 
\[  
    \dist(x,\{u_i=u_{i+1}\})\leq \frac{1}{8}<1-|x|.
\]
Then by Lemma \ref{lem:osc of diff} and Lemma \ref{lem:gradient diff},
there exists $C=C(n,m,\Lambda,\lambda)$ such that for all $x\in B_{1/16}(0)$,
\begin{gather*}
    |u_i(x)-u_{i+1}(x)|
        \leq C\cdot \kappa[\dist(x,\{u_i=u_{i+1}\})]^2
        \leq C \kappa(r(x))^2;\\
    |Du_i(x)-Du_{i+1}(x)|
        \leq C\cdot (\kappa+\kappa_0)\dist(y,\{u_i=u_{i+1}\})
        \leq C (\kappa+\kappa_0)r(x).
\end{gather*}
By the arbitrariness of $i$, we conclude that for all $x\in B_{1/16}(0)$ and $i,j\in\{1,\cdots,m\}$,
\begin{equation}\label{eq:oscillation}
    |u_i(x)-u_j(x)|\leq C\kappa\big(r(x)\big)^2;\quad 
    |Du_i(x)-Du_j(x)|\leq C(\kappa+\kappa_0) r(x).
\end{equation}
Now we fix $y\in B_{1/16}(0)$ and use $r$ to denote $r(y)$ when there is no ambiguity. For any $x\in B_1(0)$, define
\[  
    v_j(x):=r^{-1}u_j(rx+y), \quad 
    \wti g_j(x)=rg_j(rx+y).\]
Then by the choice of $r_1$ and $r$, $\{v_j\}$ is not connected in $B_1(0)$. By relabeling, we have connected components 
\[
    \{v_{1,1},\cdots,v_{1,i_1}\},\quad 
    \{v_{2,1},\cdots,v_{2,i_2}\},\quad \cdots, \quad 
    \{v_{\mk m,1},\cdots,v_{\mk m,i_\mk m}\},
\]
and the associated functions
\[
    \{\wti g_{1,1,},\cdots,\wti g_{1,i_1}\},\quad 
    \{\wti g_{2,1},\cdots,\wti g_{2,i_2}\},\quad \cdots, \quad 
    \{\wti g_{\mk m,1},\cdots,\wti g_{\mk m,i_\mk m}\}.
\]
We assume that the order is preserved, i.e. 
\[   
    v_{1,1}\leq \cdots \leq v_{1,i_1}< v_{2,1}\leq \cdots <v_{\mk m,1}\leq \cdots\leq v_{\mk m,i_\mk m}.
\]
Then by \eqref{eq:oscillation}, for all $j,k\in\{1,\cdots,\mk m\}$ and $1\leq \ell\leq i_j$, $1\leq \ell'\leq i_k$,
\begin{equation}\label{eq:osc of v}
    |v_{j,\ell}(x)-v_{k,\ell'}(x)|\leq C\kappa r(y),\quad  
    |Dv_{j,\ell}(x)-Dv_{k,\ell'}(x)|\leq C\cdot (\kappa+\kappa_0)r(y).
\end{equation}

Define
\[  
    \varphi_j:=\frac{1}{i_j}\sum_{\ell=1}^{i_j}v_{j,\ell}.
\]
By the subsystem condition in Definition \ref{def:Q operator} and Lemma \ref{lem:blow up}, for each $j=1,\cdots, \mk m$, $\{v_{j,\ell}\}_\ell$ is a solution to $(\wti{\mc Q},\{\wti g_{j,\ell}\}_\ell)$, where
\[  \wti {\mc Q}v=-\dv\, \wti {\ms A}(x,v,Dv)+\wti{\ms B}(x,v,Dv),\]
and 
\[  
    \wti {\ms A}(x,z,\bm p):=\ms A(y+rx,rz,\bm p),\quad 
    \wti {\ms B}(x,z, \bm p):=r\ms B(y+rx,rz,\bm p).
\]
Clearly, by \eqref{eq:Lambda and lambda},
\begin{equation}\label{eq:blow up wti AB bound}
    \lambda\leq \partial_{\bm p}\wti{\ms A}\leq \Lambda,\quad   
    \|{\wti{\ms A}}\|_{C^2}+\|\wti{\ms B}\|_{C^2}\leq \Lambda. 
\end{equation}
Moreover, by the definition of $\wti{\ms A}$ and $\wti{\ms B}$, 
\begin{equation}\label{eq:partial x wti AB}
\begin{split}
    \|\partial_{x}\wti{\ms A}\|_{C^1}+\|\partial_{z}\wti{\ms A}\|_{C^1}+\|\wti{\ms B}\|_{C^2}\leq C(n,\Lambda)\cdot r, \\
    \|\partial_z\partial_{x}\wti{\ms A}\|_{C^0}+ \|\partial_{x}\wti{\ms B}\|_{C^1}+\|\partial_{z}\wti{\ms B}\|_{C^1}\leq C(n,\Lambda)\cdot r^2. 
    \end{split}
\end{equation}

Then for any $\tau\in(0,1/4)$, $\varphi_j^\tau$ satisfies the equation in \eqref{eq:differential quotient sum} (w.r.t. $(\wti{\mc Q},\{\wti g_{j,\ell}\}_\ell)$) in $B_{1-\tau}(0)$ in the weak sense,
\[
    -\dv\Big({\bm A}_\tau^jD\varphi^\tau_j+{\bm b}_\tau^j\varphi_j^\tau\Big) + \bm c_\tau^j D\varphi^\tau_j+d_\tau^j \varphi^\tau_j
    =-\dv\bm f_\tau^j+g_\tau^j,  
\]
 where 
 \begin{gather*}
    \bm A_\tau^j:=\sum_{\ell=1}^{i_j}{\bm A_\tau}(v_{j,\ell});\quad  
        \bm b_\tau^j:=\sum_{\ell=1}^{i_j}{\bm b_\tau}(v_{j,\ell});\quad 
        \bm c_\tau^j:=\sum_{\ell=1}^{i_j}{\bm c_\tau}(v_{j,\ell});\quad 
        d_\tau^j=\sum_{j=1}^md_\tau(v_{j,\ell}) ;\\
    \bm f_\tau^j:=\sum_{\ell=1}^{i_j}\Big(
        (\bm A_\tau(v_{j,\ell}) -\bm A_\tau(\varphi_j)) (D\varphi^\tau_j-Dv_{j,\ell}^\tau)
            +\big(\bm b_\tau(v_{j,\ell})
            -\bm b_\tau(\varphi_j)\big)(\varphi^\tau_j-v_{j,\ell}^\tau) -\mk A_\tau(v_{j,\ell})
            \Big);\\
    g_\tau^j=\sum_{\ell=1}^{i_j}\Big(
        \big(\bm c_\tau(v_{j,\ell})
        -\bm c_\tau(\varphi_j)\big) (D\varphi^\tau_j-Dv_{j,\ell}^\tau)+\big(d_\tau(v_{j,\ell})
        -d_\tau(\varphi_j)\big)(\varphi_j^\tau-v_{j,\ell}^\tau)
        -\mk B_\tau(v_{j,\ell}) -\wti g_{j,\ell}^\tau
    \Big).
 	\end{gather*}
Lemma \ref{lem:bound ftau and gtau} gives the bound of $\bm f_\tau^j$ and $g_\tau^j$:
\begin{gather*}
\begin{split}
    \Big|\bm f_\tau^j+\sum_{\ell=1}^{i_j}\mk A_\tau(v_{j,\ell})\Big|
    \leq C\sum_\ell\Big(\|v_{j,\ell}-\varphi_j\|_{C^1}\cdot |D\varphi_j^\tau-Dv_{j,\ell}^\tau|+\|v_{j,\ell}-\varphi_j\|_{C^1}\cdot |\varphi_j^\tau-v_{j,\ell}^\tau|\Big);
\end{split}\\
\begin{split}
    \Big|g_\tau^j+\sum_{\ell=1}^{i_j}\mk B_\tau(v_{j,\ell})\Big|
    \leq C\sum_\ell\Big(\|v_{j,\ell}-\varphi_j\|_{C^1}\cdot |D\varphi_j^\tau-Dv_{j,\ell}^\tau|+\|v_{j,\ell}-\varphi_j\|_{C^1}\cdot |\varphi_j^\tau-v_{j,\ell}^\tau|+\\
      +|D\wti g_{j,\ell}|_{C^0}\Big);
\end{split}
\end{gather*}  
This together with \eqref{eq:osc of v} and \eqref{eq:partial x wti AB} gives that
\begin{gather}\label{eq:ftauj+mk A}
    \Big|\bm f_\tau^j + \sum_{\ell=1}^{i_j}\mk A_\tau(v_{j,\ell})\Big|
    \leq C\Big(
        (\kappa+\kappa_0)r\sum_{\ell=1}^{i_j}|D\varphi^\tau_j-Dv_{j,\ell}^\tau|
        +(\kappa+\kappa_0)^2r^2
    \Big).\\
    \label{eq:gtauj+mk B}
    \Big|g_\tau^j+\sum_{\ell=1}^{i_j}\mk B_\tau(v_{j,\ell})\Big|
    \leq C\Big(
        (\kappa+\kappa_0)r\sum_{\ell=1}^{i_j}|D\varphi^\tau_j-Dv_{j,\ell}^\tau|
        +(\kappa+\kappa_0)^2r^2+\kappa_1 r^2
    \Big).    
\end{gather}

\medskip
{\noindent\em Step 2: Estimating the height of two connected components.}

\medskip 
For $j,k\in\{1,\cdots, \mk m\}$, letting $\Psi:=\varphi_j-\varphi_k$, we obtain the equation for $\Psi^\tau$ in $B_{1-\tau}(0)$ as follows:
\begin{equation}\label{eq:pde of varphijtau-varphiktau}
    -\dv \big( \overline {\bm A}D\Psi^\tau + \oli{\bm b}\Psi^\tau \big)
          +\oli{\bm c} D\Psi^\tau+\oli{d}\Psi^\tau
    =-\dv\oli{\bm f}+\oli{g},
\end{equation}
where
\begin{align*}
    &\oli{\bm A}:=\frac{1}{i_j}\bm A_\tau^j; \quad  
        \oli{\bm b}:=\frac{1}{i_j}\bm b_\tau^j;\quad 
        \oli{\bm c}:=\frac{1}{i_j}\bm c_\tau^j; \quad 
        {d}:=\frac{1}{i_j}d_\tau^j;\\
    &\oli{\bm f}:=-\Big(\frac{1}{i_j}\bm A_\tau^j
                 -\frac{1}{i_k}\bm A_\tau^k\Big) D\varphi_k^\tau
                 -\Big(\frac{1}{i_j}\bm b_\tau^j-\frac{1}{i_k}\bm b_\tau^k\Big) \varphi_k^\tau
                 +\Big(\frac{1}{i_j}\bm f_\tau^j-\frac{1}{i_k}\bm f_\tau^k\Big);\\
    &\oli{g}=-\Big(\frac{1}{i_j}\bm c_\tau^j
            -\frac{1}{i_k}\bm c_\tau^k\Big)  D\varphi_k^\tau
            -\Big(\frac{1}{i_j}d_\tau^j-\frac{1}{i_k}d_\tau^k\Big) 
            \varphi_k^\tau
            +\Big(\frac{1}{i_j}g_\tau^j-\frac{1}{i_k}g_\tau^k \Big).
\end{align*}
By Lemma \ref{lem:Xtau difference of u v} and \eqref{eq:osc of v}, \eqref{eq:blow up wti AB bound}, \eqref{eq:partial x wti AB}
\begin{equation}\label{eq:bm Atauj-bm Atauk}
    \Big|\frac{1}{i_j}\bm A_\tau^j-\frac{1}{i_k}\bm A_\tau^k\Big|
    \leq C\sup_{\ell,\ell'} |\bm A_\tau(v_{j,\ell})-\bm A_\tau(v_{k,\ell'})|\leq C(n,m,\Lambda,\lambda)(\kappa+\kappa_0)r.
\end{equation}
A similar argument gives that 
\begin{gather}
    \label{eq:btauj ctauj and dtauj}
    \Big|\frac{1}{i_j}\bm b_\tau^j-\frac{1}{i_k}\bm b_\tau^k\Big|
    +\Big|\frac{1}{i_j}\bm c_\tau^j-\frac{1}{i_k}\bm c_\tau^k\Big|
    +\Big|\frac{1}{i_j}d_\tau^j-\frac{1}{i_k}d_\tau^k\Big|
    \leq C(n,m,\Lambda,\lambda)(\kappa+\kappa_0)r^2;\\
    \label{eq:mk A tau vjl and mk tau vkl}
    \Big|\frac{1}{i_j}\sum_{\ell=1}^{i_j}\mk A_\tau(v_{j,\ell})-\frac{1}{i_k}\sum_{\ell=1}^{i_k}\mk A_\tau(v_{k,\ell})\Big|
    \leq \|\partial_x\wti{\ms A}\|_{C^1}\|v_{j,\ell}-v_{k,\ell'}\|_{C^1}\leq C (\kappa+\kappa_0)r^2;\\
    \label{eq:mk B tau vjl and mk tau vkl}
    \Big|\frac{1}{i_j}\sum_{\ell=1}^{i_j}\mk B_\tau(v_{j,\ell})-\frac{1}{i_k}\sum_{\ell=1}^{i_k}\mk B_\tau(v_{k,\ell})\Big|
    \leq \|\partial_x\wti{\ms B}\|_{C^1}\|v_{j,\ell}-v_{k,\ell'}\|_{C^1}\leq C (\kappa+\kappa_0)r^3.
\end{gather}
Obviously, \eqref{eq:mk A tau vjl and mk tau vkl} combining with \eqref{eq:ftauj+mk A}  gives that 
\begin{equation}\label{eq:ftauj-ftauk}
\begin{split}
    &\hspace{1.5em}\Big|\frac{1}{i_j}\bm f_\tau^j-\frac{1}{i_k}\bm f_\tau^k\Big|\\
    &\leq \frac{1}{i_j}\Big|\bm f_\tau^j+\sum_{\ell=1}^{i_j}\mk A_\tau(v_{j,\ell})\Big|
    +\Big|\frac{1}{i_j}\sum_{\ell=1}^{i_j}\mk A_\tau(v_{j,\ell})-\frac{1}{i_k}\sum_{\ell=1}^{i_k}\mk A_\tau(v_{k,\ell})\Big|+\frac{1}{i_k}\Big|\bm f_\tau^k+\sum_{\ell=1}^{i_k}\mk A_\tau(v_{k,\ell})\Big|\\
    &\leq C(\kappa+\kappa_0)r\sum_{\ell=1}^{i_j}|D\varphi^\tau_j-Dv_{j,\ell}^\tau|+C(\kappa+\kappa_0)r^2+C(\kappa+\kappa_0)r\sum_{\ell=1}^{i_k}|D\varphi^\tau_k-Dv_{k,\ell}^\tau|. 
\end{split}
\end{equation}
Similarly, \eqref{eq:mk B tau vjl and mk tau vkl} and \eqref{eq:gtauj+mk B} imply
\begin{equation}\label{eq:gtauj-gtauk}
\begin{split}
    &\hspace{1.5em}\Big|\frac{1}{i_j}g_\tau^j-\frac{1}{i_k}g_\tau^k\Big|\\
    &\leq \frac{1}{i_j}\Big|g_\tau^j+\sum_{\ell=1}^{i_j}\mk B_\tau(v_{j,\ell})\Big|
    +\Big|\frac{1}{i_j}\sum_{\ell=1}^{i_j}\mk B_\tau(v_{j,\ell})-\frac{1}{i_k}\sum_{\ell=1}^{i_k}\mk B_\tau(v_{k,\ell})\Big|+\frac{1}{i_k}\Big|g_\tau^k+\sum_{\ell=1}^{i_k}\mk B_\tau(v_{k,\ell})\Big|\\
    &\leq C(\kappa+\kappa_0)r\sum_{\ell=1}^{i_j}|D\varphi^\tau_j-Dv_{j,\ell}^\tau|+C(\kappa+\kappa_0+\kappa_1)r^2+C(\kappa+\kappa_0)r\sum_{\ell=1}^{i_k}|D\varphi^\tau_k-Dv_{k,\ell}^\tau|. 
\end{split}
\end{equation}
By \eqref{eq:bm Atauj-bm Atauk}, \eqref{eq:btauj ctauj and dtauj}, \eqref{eq:ftauj-ftauk} and \eqref{eq:gtauj-gtauk}, we conclude that 
\begin{equation}\label{eq:|f|+|g|}
\begin{split}
    |\oli{\bm f}|+|\oli g|
        \leq C\cdot (\kappa+\kappa_0)r (|D\varphi_k^\tau|+|\varphi_k^\tau|) +C(\kappa+\kappa_0)r\sum_{\ell=1}^{i_j}|D\varphi_j^\tau-Dv_{j,\ell}^\tau|+\\
+C(\kappa+\kappa_0)r\sum_{\ell=1}^{i_k}|D\varphi_k^\tau-Dv_{k,\ell}^\tau|+C(\kappa+\kappa_0+\kappa_1)r^2,
   \end{split}
\end{equation}
where $C=C(n,m,\Lambda,\lambda)$.

\medskip
Now let $\zeta\in C_c^1(B_{1/64}(0))$ be a cut-off function so that 
\[  
    0\leq \zeta\leq 1; \quad 
    |D\zeta|\leq C; \quad 
    \zeta\equiv 1 \ \text{for all } x\in B_{1/128}(0).
\]
By taking $\phi=\Psi^\tau\zeta^2$ as a test function in \eqref{eq:pde of varphijtau-varphiktau} and using the ellipticity of $\oli{\bm A}$ and the bound of $\oli{\bm b}$, $\oli{\bm c}$, $\oli{d}$, we obtain that 
\begin{align*}
    \int_{B_{3/4}(0)}|D\Psi^\tau|^2\zeta^2\,\mr dx
        &\leq C\int_{B_{3/4}(0)}\Big( |\Psi^\tau|^2|D\zeta|^2 +|D\Psi^\tau||\Psi^\tau|\zeta^2 +|\Psi^\tau|^2\zeta^2+\\
            &\hspace{11em}+|\oli{\bm f}|(|D\Psi^\tau|\zeta^2 +|D\zeta^2||\Psi^\tau|) +|\oli{g}|\zeta^2|\Psi^\tau| \Big)\,\mr dx\\
        &\leq \frac{1}{2}\int_{B_{3/4}(0)}|D\Psi^\tau|^2\zeta^2\,\mr dx +C\int_{B_{3/4}(0)}(\kappa+\kappa_0)^2r^2
        +|\oli {\bm f}|^2\zeta^2+|\oli{g}|^2\zeta^2\,\mr dx,
\end{align*}
where we used $|\Psi^\tau|\leq C\cdot (\kappa+\kappa_0)r$ following from \eqref{eq:osc of v} and the mean value theorem.
After simplification, it becomes
\begin{equation}\label{eq:DPsi2 and f2 g2}
    \int_{B_{3/4}(0)}|D\Psi^\tau|^2\zeta^2\,\mr dx
        \leq C\int_{B_{3/4}(0)} ( \kappa+\kappa_0 )^2r^2 
        + |\oli {\bm f}|^2\zeta^2 + |\oli{g}|^2\zeta^2\,\mr dx.
\end{equation}
Now we estimate the right hand side of this equation. Observe that 
\begin{align*}
    &\Big(
        \|\partial_z\partial_x \wti{\ms A}\|_{C^0(B_1(0))}+\|\partial_x \wti{\ms B}\|_{C^1(B_1(0))}
    \Big)\cdot \sum_{\ell=1}^{i_j}\|v_{j,\ell}\|_{C^0(B_1(0))}\\
    \leq\, & \Big(
       r^2 \|\partial_z\partial_x {\ms A}\|_{C^0(B_1(0))}+r^2\|\partial_x {\ms B}\|_{C^1(B_1(0))}
    \Big)\cdot \sum_{\ell =1}^m\|u_{\ell}\|_{C^0(B_1(0))}\cdot r^{-1}\leq \eta r.
    \end{align*}
Thus by induction,
\begin{gather}\label{eq:D2 diff vj an varphi}
    \int_{B_{1/32}(0)} |Dv_{j,\ell}^\tau-D\varphi_{j}^\tau|^2\,\mr dx
        \leq C\Big((1+\eta^2\cdot r^2)(\kappa r+\kappa_0r)^2+\kappa_1^2 r^2\Big)\leq C(1+\eta^2)r^2.
\end{gather}
This together with Lemma \ref{lem:diff to sum} (by taking $K$, $\delta_0$, $\eta$, $\kappa_1$ to be $C(1+\eta^2)r^2$, $\delta_0$, $\eta r$, $\kappa_1 r^2$ therein) gives that 
\begin{equation*}
\begin{split}
    \int_{B_{1/64}(0)}|D\varphi_j^\tau|^2\,\mr dx
        &\leq C\Big(\delta_0^2(1+\eta^2r^2)+\eta^2r^2+\kappa_1\delta_0r^2\Big)\\
        &\leq C(1+\eta^2),
        \end{split}
\end{equation*}
where $C=C(n,m,\Lambda,\lambda)$ and we used the fact that $\delta_0,\kappa,\kappa_1,r\leq 1$.
Plugging this into \eqref{eq:|f|+|g|} and noting that $|\varphi_k^\tau|\leq \sup|D\varphi_k|\leq \delta_0<1$, we have
\begin{align*}
    \int_{B_{3/4}(0)}|\oli{\bm f}|^2\zeta^2+|\oli g|^2\zeta^2\,\mr dx
        &\leq C\int_{B_{1/64}(0)}\Big(
        (\kappa+\kappa_0)^2r^2 \big(|D\varphi_k^\tau|^2\zeta^2
        +\sum_{\ell=1}^{i_j} |D\varphi_j^\tau-Dv_{j,\ell}^\tau|^2 \big)+\\
        &+(\kappa+\kappa_0)^2r^2\sum_{\ell=1}^{i_k}|D\varphi_k^\tau-Dv_{k,\ell}^\tau|^2 + (\kappa+\kappa_0+\kappa_1)^2r^2
        \Big)\,\mr dx\\
        &\leq C\Big((1+\eta^2)(\kappa+\kappa_0)^2r^2+\kappa_1^2 r^2\Big).
\end{align*}
Plugging this into \eqref{eq:DPsi2 and f2 g2}, we then have that
\[
    \int_{B_{1/128}(0)}|D\Psi^\tau|^2\,\mr dx
    \leq C\Big((1+\eta^2)(\kappa+\kappa_0)^2r^2+\kappa_1^2 r^2\Big),
\]
which combining with \eqref{eq:D2 diff vj an varphi} yields that 
\[  
    \int_{B_{1/128}(0)}|D v_{j,\ell}^\tau-D v_{k,\ell'}^\tau|^2\,\mr dx
    \leq C\Big((1+\eta^2)(\kappa+\kappa_0)^2r^2+\kappa_1^2 r^2\Big).
\]
Note that $v_j$ is from scaling $u_j$. Hence we conclude that 
\begin{equation}\label{eq:hat r inequality at low density} 
    \int_{B_{\hat r(y)}(y)}|D u_j^\tau-D u_k^\tau|^2\,\mr dx\leq C\Big((1+\eta^2 )(\kappa+\kappa_0)^2+\kappa_1^2 \Big)\hat r^n, \quad \hat r(y):=r(y)/128.
\end{equation}

\medskip
{\noindent\em Step 3: Completing the proof by a covering argument.}

\medskip
So far, we have defined $\hat r(y)$ for all $y\in B_{1/16}(0)\setminus \{u_1=u_m\}$ so that \eqref{eq:hat r inequality at low density} is satisfied. Now we define $\hat r(y)=\tau$ for $y\in B_{1/16}(0)\cap\{u_1=u_m\}$. In this case, $Du_i(y)=Du_j(y)$. Then by \eqref{eq:oscillation}, for $\tau\in(0,1/32)$ and $x\in B_{\tau}(0)$,
\begin{align*}
    |Du_i^\tau(x)-Du_j^\tau(x)|
    &\leq \frac{|Du_i(x+\tau \bm e)-Du_j(x+\tau\bm e)|}{\tau} 
          +\frac{|Du_i(x)-Du_j(x)|}{\tau}\\
    &\leq C\cdot (\kappa+\kappa_0)\tau/\tau
          =C(n,m,\Lambda,\lambda)\cdot (\kappa+\kappa_0). 
\end{align*}
This implies 
\begin{equation}\label{eq:hat r inequality at touching}
    \int_{B_{\hat r(y)}(y)}|Du_i^\tau(x)-Du_j^\tau(x)|^2\,\mr dx
        \leq C\cdot (\kappa+\kappa_0)^2\hat r^n.
\end{equation}
Now that $\{B_{\hat r(y)}(y); y\in B_{1/32}(0)\}$ forms a covering of $B_{1/32}(0)$. Hence one can find a finite cover $\{B_{\hat r(y_k)}(y_k);k=1,\cdots,N\}$ so that $B_{\hat r(y_k)/2}(y_k)$ does not intersect $B_{\hat r(y_i)/2}(y_i)$ for $k\neq i$. Then by \eqref{eq:hat r inequality at low density} and \eqref{eq:hat r inequality at touching},
\begin{align*}
    \int_{B_{1/32}(0)}|Du_i^\tau-Du_j^\tau|^2\,\mr dx
    &\leq \sum_{k=1}^N\int_{B_{\hat r(y_k)}(y_k)}|Du_i^\tau-Du_j^\tau|^2\,\mr dx\\
    &\leq C\sum_{k=1}^N\Big((1+\eta^2)(\kappa+\kappa_0)^2+\kappa_1^2 \Big)[\hat r(y_k)]^n\\
    &\leq C\cdot \Big((1+\eta^2 )(\kappa+\kappa_0)^2+\kappa_1^2 \Big).
\end{align*}
This completes the proof Lemma \ref{lem:diff 22}.
\end{proof}

So far, we have proved the hessian estimates of the height difference. This combining with Lemma \ref{lem:diff to sum} will give the hessian estimates for the average, which yields the integral bound of $D^2u_j$.

Recall that 
\[    
    \bm\kappa^*:=\kappa^2+\kappa_0^2+\kappa_1^2+\delta_0^2,\]
and 
\[   \eta:=\Big(\|\partial_z\partial_x\ms A\|_{C^0(\bm U_1)}+\|\partial_x\ms B\|_{C^1(\bm U_1)}\Big)\cdot \sum_{j=1}^m\|u_j\|_{C^0(B_{1}(0))}.\]
\begin{proposition}\label{prop:ui is W22}
Let $\{u_j\}_{j=1}^m$ be $C^{1,\alpha}$ ordered graphs over $B_1(0)\subset \mb R^n$ which is a solution to $(\mc Q,\{g_j\})$. 
Suppose that \eqref{eq:Lambda and lambda}, \eqref{eq:ms AB norm}, \eqref{eq:ms AB at x00}, \eqref{eq:small C1alpha} and \eqref{eq:eta kappa0 and kappa1} are satisfied. 
Then
\[
    \sum_{j=1}^m \int_{B_{3/4}(0)}|D^2u_j|^2\,\mr dx
        \leq C(n,m,\Lambda,\lambda)(\bm\kappa^*+\eta^2).
  \]
	\end{proposition}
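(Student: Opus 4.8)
The plan is to establish the $W^{2,2}$ bound for each $u_j$ by combining the Hessian estimate for height differences (Lemma \ref{lem:diff 22}) with the Hessian estimate for the average (Lemma \ref{lem:diff to sum}), using a covering argument analogous to Step 3 of Lemma \ref{lem:diff 22}, and then summing. Since $u_j = \varphi_I + (u_j - \varphi_I)$ where $\varphi_I$ is the average over the connected component $I$ containing $j$, it suffices to bound $\int |D^2\varphi_I|^2$ and $\int |D^2(u_j-u_k)|^2$ locally; the latter is already controlled by Lemma \ref{lem:diff 22} whenever the relevant graphs are connected, so the crux is the average.

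First I would reduce to a connected system: at a point $y\in B_{3/4}(0)$, if $\{u_j\}$ is not connected in small balls around $y$, use Lemma \ref{lem:large gap} to pick a radius $r(y)\in(c(m)r_1(y), r_1(y))$ at which the number of connected components is stable between $B_{r(y)}(y)$ and $B_{r(y)/16}(y)$, rescale by $v_j(x)=r^{-1}u_j(rx+y)$ via Lemma \ref{lem:blow up}, and apply the induction hypothesis on each connected component (which has $<m$ sheets) together with Lemma \ref{lem:diff to sum} to bound $\int_{B_{r(y)/128}(y)}|D^2\varphi_I|^2$ — exactly the mechanism already carried out inside the proof of Lemma \ref{lem:diff 22}, now tracking the average rather than differences of components. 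At points where the whole system is connected in a fixed ball (say $B_{1/16}(y)$), apply Lemma \ref{lem:diff 22} to control $\int |D^2(u_j - u_k)|^2$ and then Lemma \ref{lem:diff to sum} directly (with $K$ the bound from Lemma \ref{lem:diff 22}, and $\delta_0,\eta,\kappa_1$ as given) to control $\int |D^2\varphi|^2$; since $u_j = \varphi + (u_j-\varphi)$ this yields $\int_{B_{\hat r(y)}(y)}|D^2 u_j|^2 \le C(\bm\kappa^* + \eta^2)\hat r(y)^n$ on a small ball. Combining both cases, every $y\in B_{3/4}(0)$ has a radius $\hat r(y)$ with $\int_{B_{\hat r(y)}(y)} \sum_j |D^2 u_j|^2 \le C(\bm\kappa^*+\eta^2)\hat r(y)^n$.

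Then a Vitali covering of $B_{3/4}(0)$ by such balls $\{B_{\hat r(y_k)}(y_k)\}$, with $\{B_{\hat r(y_k)/2}(y_k)\}$ pairwise disjoint, gives
\[
    \sum_{j=1}^m\int_{B_{3/4}(0)}|D^2u_j|^2\,\mr dx
    \le \sum_k \int_{B_{\hat r(y_k)}(y_k)}\sum_j|D^2u_j|^2\,\mr dx
    \le C(\bm\kappa^*+\eta^2)\sum_k \hat r(y_k)^n
    \le C(n,m,\Lambda,\lambda)(\bm\kappa^*+\eta^2),
\]
since the disjoint half-balls lie in a fixed neighborhood so $\sum_k \hat r(y_k)^n$ is uniformly bounded. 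Finiteness of the bound independent of $\tau$ first gives that the difference quotients $Du_j^\tau$ are bounded in $L^2_{loc}$, hence $u_j\in W^{2,2}_{loc}$, and then passing $\tau\to 0$ upgrades the difference-quotient bounds to genuine $D^2u_j$ bounds.

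The main obstacle I expect is bookkeeping in the induction rather than any new idea: one must verify that the rescaling in the non-connected case produces the right powers of $r$ (as in \eqref{eq:partial x wti AB} and \eqref{eq:D2 diff vj an varphi}), that the induction hypothesis is applied to strictly smaller subsystems so the constants $C(n,m,\Lambda,\lambda)$ do not blow up, and that the parameters $\delta_0,\kappa,\kappa_0,\kappa_1,\eta$ transform correctly under scaling (Remark \ref{rmk:blow up AB bound}) so that all the scale-dependent factors collapse into $\bm\kappa^* + \eta^2$ after the covering sum using $\sum_k \hat r(y_k)^n \le C$.
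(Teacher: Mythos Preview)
Your proposal is correct and would work, but it is more elaborate than the paper's argument. The paper avoids both the fresh induction on $m$ and the variable-radius Vitali covering by exploiting a feature of Lemma~\ref{lem:large gap} that you do not fully use: the stable radius it produces is bounded \emph{below} by a constant depending only on $m$. Concretely, for every $y\in B_{3/4}(0)$ one applies Lemma~\ref{lem:large gap} inside $B_{1/4}(y)$ to obtain $r\in[16^{-m},1]$ with the same connected components in $B_{r/4}(y)$ and $B_{r/64}(y)$; then Lemma~\ref{lem:diff 22} (which already encapsulates all the inductive content) applied to each connected component, followed by Lemma~\ref{lem:diff to sum} for the corresponding average, gives $\int_{B_{r_m}(y)}|D^2u_j|^2\le C(\bm\kappa^*+\eta^2)$ for the \emph{fixed} radius $r_m=16^{-m-2}$. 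A finite cover of $B_{3/4}(0)$ by balls of this fixed radius then finishes the proof in one line. Your route reproves part of the induction already packaged inside Lemma~\ref{lem:diff 22} and trades a finite fixed-radius cover for a Vitali argument; both are harmless, and your scaling bookkeeping would go through since the radii $\hat r(y)$ in your two cases are in fact uniformly bounded below, but they are avoidable.
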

\begin{proof}
For every $y\in B_{3/4}(0)$, it follows that $B_{1/4}(y)\subset B_1(0)$. 
By Lemma \ref{lem:large gap}, there exists 
$r\in [16^{-m},1]$ 
so that $\{u_j\}$ has the same number of connected components in 
$B_{r/4}(y)$ and $B_{r/64}(y)$. 
Denote by 
$\{v_j\}_{j=1}^{\mk m}$ 
be one of the connected components. 
Note that $\kappa,\kappa_0,\kappa_1\leq 1$.
Then by Lemma \ref{lem:diff 22} in $B_{r/4}(y)$ for $\{v_j\}$, 
we have that for $i,j=1,\cdots,\mk m$,
\begin{equation}\label{eq:D2 diff at r/64 center y}
    \int_{B_{r/128}(y)}|D^2v_i-D^2v_j|^2\,\mr dx
         \leq C \Big((1+\eta^2)(\kappa+\kappa_0)^2+\kappa_1^2\Big)\leq C(\bm\kappa^*+\eta^2).
\end{equation}
Then applying Lemma \ref{lem:diff to sum} in $B_{r/128}(y)$, we obtain
\begin{equation*}
\begin{split}
    \int_{B_{r/256}(y)}|D^2\varphi|^2\,\mr dx
    &\leq C\Big(\delta_0^2\cdot C(\bm\kappa^*+\eta^2+1)+\eta^2+\kappa_1\delta_0\Big)\leq C(\bm\kappa^*+\eta^2).
\end{split}
\end{equation*}
where $C=C(n,m,\Lambda,\lambda)$ and 
$\varphi=\frac{1}{\mk m}\sum_{j=1}^{\mk m} v_j$. 
This together with \eqref{eq:D2 diff at r/64 center y} gives that for $j=1,\cdots,\mk m$,
\[
    \int_{B_{r/256}(y)}|D^2v_j|^2\,\mr dx
         \leq C(\bm\kappa^*+\eta^2).
\]
In particular, letting $r_m:=16^{-m-2}$, by the arbitrariness of $v_j$, we conclude that for $j=1,\cdots, m$,
\[  
    \int_{B_{r_m}(y)}|D^2u_j|^2\,\mr dx
         \leq C(\bm\kappa^*+\eta^2).
         \]
By taking a finite cover of $B_{3/4}(0)$ with small balls with radius $r_m$, then Proposition \ref{prop:ui is W22} is proved.         
\end{proof}

%%%%%%%%%%%%%%%%%%%%%%%%%%%%%%%%%%%%%%%%%%%%%%%
%Section Order estimates
%%%%%%%%%%%%%%%%%%%%%%%%%%%%%%%%%%%%%%%%%%%%%%%
\section{Order estimates of the hessian}\label{sec:order of hessian}

This section is devoted to prove a monotonicity formula for the average of $|D^2u_j|^2$ over small balls, which implies that $\int_{B_r(y)}|D^2u_j|^2\,\mr dx$ has order $n$. Since we have proved that each $u_j$ is $W^{2,2}$, then we are able to consider the divergence form equation for $Du_j$, $j=1,\cdots,m$. We will follow the strategy in proving H\"older regularity for gradients of solutions of divergence form, where one should approximate the $D_ku$ ($k=1,\cdots,n$) by a harmonic function; see \cite{Han-Lin-PDE}*{\S 3.4}. The main difficulty here is that the connectedness of $\{u_j\}$ may be not preserved and then there are no height difference estimates in Section \ref{sec:height difference}.

To overcome it, we will use an inductive method. Note that at the radius where $\{u_j\}$ is not connected, the height difference estimates are still valid. By the subsystem condition, each connected components will satisfy a partial differential equation. Furthermore, the difference of the average of connected component will satisfy a weak solution. This together with the inequality from the induction will give the desired inequality.

In Section \ref{subsec:primary order}, We will first prove the order $n-\epsilon$ for any $\epsilon>0$. The idea is to approximate the average by a harmonic function which will help us to give the order of the average. Combining with the height difference estimates, we will obtain the order for each $u_j$.

In Section \ref{subsec:improved order}, we will prove the improved order estimates for the graph of $u_j$ over a tilt-plane. Note that the inductive method can not be proceed since the optimal order is $n$ instead of $n+\alpha$ in the minimal surface equation. Here we will consider the order of the average and then use the inductive method to estimate 
\begin{gather*}
      \bm\sigma^*(r):= \sum_{j=1}^m\int_{B_r(0)}|D^2u_j- 
                      (D^2u_j)_{B_r(0)}|^2\,\mr dx.
       \end{gather*}

Finally, in Section \ref{subsec:monotonicity of second derivative}, we will give a monotonicity formula of $r^{-n}\bm\sigma^*(r)$.

\medskip
In this section, we will use the following classical results. Note that here we only require the inequalities for $r>r_0$.
\begin{lemma}[\citelist{\cite{Han-Lin-PDE}*{Lemma 3.4}\cite{Jost13}*{Lemma 14.4.16}}]\label{lem:order}
Let $\gamma>1$, $R_0>0$, $\beta\in(0,1/2)$, $\mu>2\beta$, $\kappa>0$ and $\sigma(r)$ be a nonnegative, monotonically increasing function on $[0, R_0]$ satisfying
\[ 
    \sigma(r)\leq \gamma\Big(\Big(\frac{r}{R}\Big)^\mu+\delta\Big)\sigma(R) 
                  +\kappa R^{\mu-2\beta}   \]
for all $0<r_0 < r\leq R \leq  R_0$ with 
$\delta<\delta_1(\gamma,\mu,\beta)=(2\gamma)^{-\frac{\mu}{\beta}}$. 
We then have
\[ 
    \sigma(r)\leq \gamma_1\Big(\frac{r}{R}\Big)^{\mu-\beta}\sigma(R) 
                  +\gamma_2\kappa r^{\mu-2\beta}   \]
with $\gamma_1,\gamma_2$ depending on $\gamma, \mu, \beta$.
\end{lemma}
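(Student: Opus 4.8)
The plan is to prove this by the standard fast geometric iteration (a ``hole-filling''-type argument). First I would fix the decay ratio
\[
\tau:=(2\gamma)^{-1/\beta}\in(0,1),
\]
chosen precisely so that $\tau^\beta=(2\gamma)^{-1}$, hence $\gamma\tau^\mu=\tfrac12\tau^{\mu-\beta}$; and since the hypothesis assumes $\delta<\delta_1=(2\gamma)^{-\mu/\beta}=\tau^\mu$, we also get $\gamma\delta<\tfrac12\tau^{\mu-\beta}$. Inserting $r=\tau R$ into the assumed inequality then yields the one-step improvement
\[
\sigma(\tau R)\le\gamma(\tau^\mu+\delta)\sigma(R)+\kappa R^{\mu-2\beta}\le\tau^{\mu-\beta}\sigma(R)+\kappa R^{\mu-2\beta},
\]
valid whenever $r_0<\tau R\le R\le R_0$.

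Next I would iterate this along the geometric sequence $R,\tau R,\tau^2R,\dots$ As long as $\tau^kR>r_0$, unrolling the recursion $a_{i+1}\le b\,a_i+c_i$ with $a_i=\sigma(\tau^iR)$, $b=\tau^{\mu-\beta}$, $c_i=\kappa(\tau^iR)^{\mu-2\beta}=\kappa R^{\mu-2\beta}\tau^{i(\mu-2\beta)}$ gives
\[
\sigma(\tau^kR)\le\tau^{k(\mu-\beta)}\sigma(R)+\kappa R^{\mu-2\beta}\,b^{k-1}\sum_{i=0}^{k-1}\bigl(\tau^{\mu-2\beta}/b\bigr)^i.
\]
The one place where a little care is needed is the inhomogeneous term: here $\tau^{\mu-2\beta}/b=\tau^{-\beta}>1$, so the geometric series has \emph{growing} terms; nevertheless its partial sum is bounded by $\tau^{-k\beta}/(\tau^{-\beta}-1)$, and this absorbs exactly the factor needed so that $b^{k-1}\sum_{i}(\tau^{-\beta})^i\le C(\tau,\mu,\beta)\,\tau^{k(\mu-2\beta)}$ with $C=\tau^{-(\mu-\beta)}/(\tau^{-\beta}-1)$. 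Writing $r=\tau^kR$, this is precisely $\sigma(r)\le(r/R)^{\mu-\beta}\sigma(R)+C\kappa r^{\mu-2\beta}$ on the discrete scales.

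Finally, for a general $r\in(r_0,R]$ I would pick the integer $k$ with $\tau^{k+1}R<r\le\tau^kR$. Monotonicity of $\sigma$ gives $\sigma(r)\le\sigma(\tau^kR)$, and the comparison $\tau^kR=\tau^{-1}(\tau^{k+1}R)<\tau^{-1}r$ (together with the positivity of the exponents $\mu-\beta$ and $\mu-2\beta$, guaranteed by $\mu>2\beta>0$) converts the discrete-scale bound into
\[
\sigma(r)\le\tau^{-(\mu-\beta)}\Bigl(\frac rR\Bigr)^{\mu-\beta}\sigma(R)+C\,\tau^{-(\mu-2\beta)}\kappa\,r^{\mu-2\beta},
\]
which is the claimed estimate with $\gamma_1=\tau^{-(\mu-\beta)}$ and $\gamma_2=C\,\tau^{-(\mu-2\beta)}$, both depending only on $\gamma,\mu,\beta$ since $\tau$ does. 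Throughout one must check that every radius occurring in the iteration lies in $(r_0,R_0]$: this holds because the chain from $R$ down to $\tau^kR$ only uses radius pairs $(\tau^{i+1}R,\tau^iR)$ with $\tau^{i+1}R\ge\tau^kR\ge r>r_0$ and $\tau^iR\le R\le R_0$. There is no genuine obstacle here—the argument is a routine iteration—but if forced to name the delicate point it is the bookkeeping of the inhomogeneous geometric series, whose terms grow in $i$ yet whose sum still scales like $r^{\mu-2\beta}$.
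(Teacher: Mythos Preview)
Your proposal is correct and follows essentially the same route as the paper's proof: the same choice $\tau=(2\gamma)^{-1/\beta}$, the same one-step bound $\sigma(\tau R)\le\tau^{\mu-\beta}\sigma(R)+\kappa R^{\mu-2\beta}$, the same geometric iteration along $\tau^iR$, and the same final conversion via $\tau^{k+1}R<r\le\tau^kR$ combined with monotonicity. Your bookkeeping of the inhomogeneous sum and your explicit check that all iterated radii stay in $(r_0,R_0]$ are slightly more detailed than the paper's presentation, but the argument is the same.
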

\begin{proof}
Let 
\[  
    \tau:=(2\gamma)^{-\frac{1}{\beta}}. \]
Then $\delta<\delta_1=\tau^\mu$. By assumption,
\[   
    \sigma(\tau R)\leq 2\gamma \tau^\mu\sigma(R)+\kappa R^{\mu-2\beta}
        =\tau^{\mu-\beta}\sigma(R)+\kappa R^{\mu-2\beta}.
\]	
Since $r<R$, then there exists an integer $k\geq 0$ so that 
\[  \tau^{k+1}R<r\leq \tau^k R.   \]
Without loss of generality, we assume that $k\geq 1$. Then we have that 
\begin{align*}
	 \sigma(r)\leq \sigma(\tau^kR)&\leq \tau^{(\mu- 
             \beta)k}\sigma(R)+\sum_{j=0}^{k-1}\kappa (\tau^{k-1-j}R)^{\mu-2\beta}\tau^{(\mu-\beta)j}\\
	 &\leq \tau^{\beta-\mu}\cdot \Big(\frac{r}{R}\Big)^{\mu- 
             \beta}\sigma(R)+\kappa\tau^{(k-1)(\mu-2\beta)}R^{\mu-2\beta}(1-\tau^{\beta})^{-1}\\
	 &\leq (2\gamma)^{\frac{\mu-\beta}{\beta}}\Big(\frac{r}{R}\Big)^{\mu- 
             \beta}\sigma(R)+\kappa\cdot (1-(2\gamma)^{-1})^{-1}\cdot (2\gamma)^{\frac{2(\mu-2\beta)}{\beta}}\cdot r^{\mu-2\beta}.	 
	 \end{align*}
Then the lemma follows by taking
\[  
      \gamma_1= (2\gamma)^{\frac{\mu-\beta}{\beta}};\quad \gamma_2= (1-(2\gamma)^{-1})^{-1}\cdot (2\gamma)^{\frac{2(\mu-2\beta)}{\beta}}. 
\]
	\end{proof}
 
%%%%%%%%%%%%%%%%%%%%%%%%%%%%%%%%%%%%%%%%%%%%%%%%%%%%
\subsection{A primary order estimate}\label{subsec:primary order}
Let $\{u_j\}$ be $C^{1,\alpha}$ ordered graphs on $B_1(0)$ which is a solution to $(\mc Q,\{g_j\})$. 
Suppose that $\{u_j\}$ is connected in $B_s(0)$ for some $s\in (0,1/16]$. Then by Lemma \ref{lem:osc of diff} and Lemma \ref{lem:gradient diff}, there exists $C=C(n,m,\Lambda,\lambda)$ for $x\in B_{1/16}(0)\setminus B_s(0)$,
\begin{equation}\label{eq:diff order two}
      |u_i(x)-u_j(x)|\leq C\cdot \kappa|x|^2,\quad  |Du_i(x)-Du_j(x)|\leq C\cdot (\kappa+\kappa_0) |x|.       
\end{equation}
We now derive a local integral estimate for the average of $\{u_j\}$. This process is similar to the H\"older estimates \cite{Han-Lin-PDE}*{Theorem 3.8} by approximating with a harmonic function. However, we have more error terms on the right hand side. To overcome it, we will write the equation of $D_k\varphi$ as a form of \eqref{eq:Dk varphi has good error term} and use \eqref{eq:diff order two} to control all of the error terms.

Recall that 
\[    
    \bm\kappa^*:= \kappa^2+\kappa_0^2+\kappa_1^2+\delta_0^2,\]
and 
\[   \eta:=\Big(\|\partial_z\partial_x\ms A\|_{C^0(\bm U_1)}+\|\partial_x\ms B\|_{C^1(\bm U_1)}\Big)\cdot \sum_{j=1}^m\|u_j\|_{C^0(B_{1}(0))}.\]
For simplicity, let 
\[  
     \epsilon:=\min\left\{\frac{1}{2}\alpha,\frac{1}{2}(1-\alpha)\right\}. \]

\begin{lemma}\label{lem:n-epsilon}
Let $\{u_j\}$ be $C^{1,\alpha}$ ordered graphs on $B_1(0)$ which is a solution to $(\mc Q,\{g_j\})$. Suppose that \eqref{eq:Lambda and lambda}, \eqref{eq:ms AB norm},  \eqref{eq:ms AB at x00}, \eqref{eq:small C1alpha} and \eqref{eq:eta kappa0 and kappa1} are satisfied. 
If  $\{u_j\}_{j=1}^m$ is connected in $B_s(0)$, with $s\leq r< R$, 
then we have that 
\begin{equation*}%\label{eq:n-epsilon oder} 
      \int_{B_r(0)}|D^2\varphi|^2\,\mr dx\leq C\Big(\frac{r}{ R}\Big)^{n-\epsilon}\int_{B_{ R}(0)}|D^2 \varphi|^2\,\mr dx+C(\bm \kappa^*+\eta^2) r^{n-2\epsilon},  
\end{equation*}
where $\varphi =\frac{1}{m}\sum_{j=1}^mu_j$ and $C=C(n,m,\Lambda,\lambda,\alpha)$.
	\end{lemma}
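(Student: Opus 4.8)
The plan is to adapt the harmonic approximation argument from \cite{Han-Lin-PDE}*{Theorem 3.8} to the equation \eqref{eq:pde of Dk varphi} satisfied by $D_k\varphi$, keeping careful track of the error terms. First I would fix $k\in\{1,\dots,n\}$ and rewrite \eqref{eq:pde of Dk varphi} as a divergence-form equation
\[
    -\dv\big(\bm A^* D(D_k\varphi)\big) = \dv(\text{error}) + (\text{lower order}),
\]
where $\bm A^*=\sum_j\partial_{\bm p}\ms A(x,u_j,Du_j)$ is uniformly elliptic by \eqref{eq:Lambda and lambda}, and where — using Lemma \ref{lem:bound hat f and hat g}, the height difference bounds \eqref{eq:diff order two}, and the $C^{1,\alpha}$ smallness \eqref{eq:small C1alpha} — every error term is controlled by $|x|$ (from $|u_i-u_j|,|Du_i-Du_j|$) times a second-derivative factor, plus $\kappa$, $\kappa_0$, $\kappa_1$, and $\eta$ contributions. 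Freezing the coefficients of $\bm A^*$ at the center $0$ introduces an additional error $\|\bm A^*(x)-\bm A^*(0)\|\lesssim |x|^\alpha + \text{osc of } Du_j$, which again is absorbed into the ``good'' error term; schematically one arrives at \eqref{eq:Dk varphi has good error term}, an equation for $D_k\varphi$ whose inhomogeneity has a factor that is small on $B_R(0)$ provided $R$ is small.

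Next I would perform the comparison: on $B_R(0)$ (with the coefficients frozen at $0$) let $w$ solve the constant-coefficient homogeneous equation $-\dv(\bm A^*(0)Dw)=0$ with $w=D_k\varphi$ on $\partial B_R(0)$. Standard interior estimates for constant-coefficient equations give, for $r\le R$,
\[
    \int_{B_r(0)}|Dw - (Dw)_{B_r}|^2\,\mr dx
    \le C\Big(\frac{r}{R}\Big)^{n+2}\int_{B_R(0)}|Dw-(Dw)_{B_R}|^2\,\mr dx,
\]
and in particular $\int_{B_r}|Dw|^2 \lesssim (r/R)^n \int_{B_R}|Dw|^2$ after subtracting affine parts appropriately. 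The difference $D_k\varphi - w$ is estimated by an energy estimate using the inhomogeneity: testing the equation for $D_k\varphi-w$ against itself and using the error bounds together with the already-established $W^{2,2}$ bound (Proposition \ref{prop:ui is W22}) controls $\int_{B_R}|D(D_k\varphi)-Dw|^2$ by $C(\bm\kappa^*+\eta^2)R^n$ plus a term of the form $C\,\omega(R)\int_{B_R}|D^2\varphi|^2$ where $\omega(R)\to 0$. Summing over $k$ and combining the two pieces via the triangle inequality yields
\[
    \int_{B_r(0)}|D^2\varphi|^2
    \le C\Big(\big(\tfrac{r}{R}\big)^n + \omega(R)\Big)\int_{B_R(0)}|D^2\varphi|^2 + C(\bm\kappa^*+\eta^2)R^{n-2\epsilon}
\]
for $s\le r\le R\le R_0$, where I can take $\mu = n$, $\beta=\epsilon$, so $\mu-2\beta = n-2\epsilon > 0$ and $2\beta<\mu$ as required.

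Finally I would invoke Lemma \ref{lem:order} with $\gamma$, $\mu=n$, $\beta=\epsilon$, and $\delta=\omega(R_0)$ chosen small enough (by shrinking $R_0$, hence $\delta_0$-type smallness from the blow-up Lemma \ref{lem:blow up} if needed) that $\delta<\delta_1(\gamma,\mu,\beta)$; the hypothesis of Lemma \ref{lem:order} only requires the inequality for $r>r_0:=s$, which matches exactly the constraint that $\{u_j\}$ be connected on $B_r(0)$ so that \eqref{eq:diff order two} is available. This produces $\int_{B_r(0)}|D^2\varphi|^2 \le \gamma_1(r/R)^{n-\epsilon}\int_{B_R(0)}|D^2\varphi|^2 + \gamma_2 C(\bm\kappa^*+\eta^2)r^{n-2\epsilon}$, which is the claimed bound. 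The main obstacle I expect is bookkeeping: organizing the many error terms in \eqref{eq:pde of Dk varphi} (via Lemma \ref{lem:bound hat f and hat g}) so that each is genuinely a ``coefficient-oscillation'' term controlled by a power of $|x|$ or by $R_0$-smallness times $\int|D^2\varphi|^2$, rather than a term with an uncontrolled second-derivative factor — the height difference estimate \eqref{eq:diff order two} is precisely what makes the cross terms $\|u_j-\varphi\|_{C^1}|D^2 u_j|$ absorbable, and one must check this carefully on the annulus $B_R\setminus B_s$ where connectedness may fail only outside $B_s$.
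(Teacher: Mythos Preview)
Your proposal is correct and follows essentially the same harmonic-approximation scheme as the paper's proof: freeze $\bm A^*$ at the origin, compare $D_k\varphi$ with the solution of the homogeneous constant-coefficient equation with matching boundary data, control the discrepancy via the height-difference bounds \eqref{eq:diff order two} and Lemma \ref{lem:diff 22}, then iterate with Lemma \ref{lem:order} using $\mu=n$, $\beta=\epsilon$ (and a final case split to pass from $R\le R_0$ to general $R$). One cosmetic difference: the paper subtracts the constant $D_k\varphi(0)$ from the Dirichlet data for the comparison function, which yields the sharper discrepancy estimate \eqref{eq:Dvarphi-f} with factor $R^{2\alpha}$ rather than $R^{\alpha}$ --- as the remark following the lemma notes, your weaker version suffices here, but the sharper form is reused verbatim in Lemma \ref{lem:D2 integral beta}.
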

\begin{proof}[Proof of Lemma \ref{lem:n-epsilon}]
\begin{comment}
If $s\geq 1/64$, then $R\geq r\geq 1/64$ and $R/r\leq 64$. It follows that 
\begin{equation*}
\int_{B_r(0)}|D^2\varphi|^2\,\mr dx\leq C\cdot 64^n\Big(\frac{r}{ R}\Big)^{n-\epsilon}\int_{B_{ R}(0)}|D^2 \varphi|^2\,\mr dx.
\end{equation*}
This is the desired inequlity.
\end{comment}
We first assume that $r\leq R\leq 1/64$. 
By Remark \ref{prop:ui is W22}, $u_j\in W^{2,2}_{loc}(B_1(0))$. Then by \eqref{eq:pde of Dk varphi}, 
\begin{equation}\label{eq:diff sum}
      -\dv\Big(\bm A^*DD_k\varphi+{\bm b^*}D_k\varphi\Big)+\bm c^*DD_k\varphi +d^* D_k\varphi=-\dv\Big(\hat{\bm f}-m\partial_{x_k}\ms A(x,\varphi,D\varphi)\Big)+\hat g,
\end{equation}
where $\hat{\bm f}$ and $\hat g$ are defined in \eqref{eq:fd} and \eqref{eq:gd}; and 
\begin{gather*}
      \bm A^*(x):=\sum_{j=1}^m\partial_{\bm p} \ms A(x,u_j(x),Du_j(x)); 
               \quad \bm b^*(x):=\sum_{j=1}^m\partial_{z} \ms A(x,u_j(x),Du_j(x));\\
      \bm c^*(x):=\sum_{j=1}^m\partial_{\bm p} \ms B(x,u_j(x),Du_j(x)); 
               \quad  d^*(x):=\sum_{j=1}^m\partial_{z} \ms B(x,u_j(x),Du_j(x)).
\end{gather*}
We divide the proof into four steps.

\medskip
{\noindent\bf Step I:} {\em Estimating the coefficients in \eqref{eq:diff sum}.}

\medskip
Observe that by \eqref{eq:small C1alpha}, for all $x,y\in B_R(0)$,
\begin{equation}\label{eq:ujx-ujy}
    |u_j(x)-u_j(y)|\leq 2\delta_0 R, \quad |Du_j(x)-Du_j(y)|\leq 2\delta_0R^\alpha.
\end{equation}
By \eqref{eq:ms AB norm} and the definition of $\bm A^*,\bm b^*,\bm c^*, d^*$, 
\begin{equation}\label{eq:bound A* b* c* d*}
        |\bm A^*|+|\bm b^*|+|\bm c^*|+|d^*|\leq C(m,\Lambda). 
\end{equation}
By the definition of $\bm A^*$, we have that 
\begin{align*}
\begin{split}
    |\bm A^*(x)-\bm A^*(y)|\leq \sum_{j=1}^m\Big(\|\partial_x\partial_{\bm p}\ms A\|_{C^0}|x-y|
         +\|\partial_z\partial_{\bm p}\ms A\|_{C^0}|u_j(x)-u_j(y)|+\\
         \|\partial_{\bm p}\partial_{\bm p}\ms A\|_{C^0}|Du_j(x)-Du_j(y)|\Big).
         \end{split}
\end{align*}
Together with \eqref{eq:ujx-ujy}, we have that 
\begin{equation}\label{eq:bound Ax-Ay}
     \sup_{x,y\in B_R(0)} |\bm A^*(x)-\bm A^*(y)|\leq C(m,\Lambda)(R+\delta_0R+\delta_0R^\alpha)  \leq  C(m,\Lambda)( R+\delta_0R^\alpha)  . 
\end{equation}   
A similar argument gives that 
\begin{equation}\label{eq:bound bx-by}
    \sup_{x,y\in B_R(0)} \Big(|\bm b^*(x)-\bm b^*(y)|+|\bm c^*(x)-\bm c^*(y)|+|d^*(x)-d^*(y)|\Big)\leq C(m,\Lambda)( R+\delta_0R^\alpha) .  
\end{equation}
On the other hand, plugging \eqref{eq:ujx-ujy} and \eqref{eq:diff order two} into Lemma \ref{lem:bound hat f and hat g}, we have that 
\begin{gather}
\label{eq:integral estimate of hatf}
\begin{split}
    \int_{B_R(0)}|\hat{\bm f}|^2\,\mr dx
    \leq C\sum_{j=1}^m\int_{B_R(0)} 
        (\kappa+\kappa_0)^2R^2\Big(|DD_k\varphi-DD_ku_j|^2+1\Big)\,\mr dx;
    \end{split}\\
     \label{eq:integral estimate of hatg}
    \begin{split}
    \int_{B_R(0)}|\hat{g}|\,\mr dx
    \leq C\sum_{j=1}^m\int_{B_R(0)} 
        (\kappa+\kappa_0)R\big(|DD_k\varphi-DD_ku_j|+(\kappa+\kappa_0)R\big)+\eta+\delta_0+\kappa_1
    \,\mr dx.
    \end{split}
\end{gather}
Recall that by Lemma \ref{lem:diff 22}, for $1\leq i\leq j\leq m$,
\begin{equation*}%\label{eq:from lem of hessian diff} 
    \int_{B_R(0)}|D^2u_i-D^2u_j|^2\,\mr dx\leq C\cdot \Big((1+\eta^2)(\kappa+\kappa_0)^2+\kappa_1^2\Big) R^n, 
\end{equation*}
which implies that 
\[
      \int_{B_R(0)}|D^2\varphi-D^2u_j|^2\,\mr dx\leq C\cdot \Big((1+\eta^2)(\kappa+\kappa_0)^2+\kappa_1^2\Big) R^n.
\]
Plugging the above two inequalities into \eqref{eq:integral estimate of hatf} and \eqref{eq:integral estimate of hatg}, we have 
\begin{gather}
    \label{eq:bound of L2 hat f}
    \int_{B_R(0)}|\hat{\bm f}|^2\,\mr dx
    \leq C(1+\eta^2) \bm \kappa^*R^{n+2},\\
    \label{eq:bound of L1 hat g}
    \int_{B_R(0)}|\hat g|\,\mr dx \leq C 
         (\sqrt{\bm\kappa^*}+\eta\bm\kappa^*) R^n ,
\end{gather}
where $C=C(n,m,\Lambda,\lambda)$ and we used that $\kappa,\kappa_0,\kappa_1,\delta_0\leq 1$. 

By the definition of $\varphi$ and \eqref{eq:ujx-ujy}, for $x,y\in B_R(0)$,
\begin{equation}\label{eq:Dkvarphi-Dkvarphi0}
    |D_k\varphi|\leq \delta_0,\quad   |D_k\varphi(x)-D_k\varphi(y)|\leq \delta_0(2R)^{\alpha}.
\end{equation} 
Plugging it into Lemma \ref{lem:bound hat f and hat g}, we obtain that for all $\phi\in C_c^1(B_R(0))$,
\begin{equation}
    \label{eq:partial xk A integral bound}
    \Big|\int_{B_R(0)} \partial_{x_k}\ms A(x,\varphi,D\varphi) D\phi\,\mr dx\Big|\leq C\int_{B_R(0)}(\eta+\delta_0)|\phi|+\delta_0R^\alpha|D\phi|+R^\alpha|D^2\varphi||\phi|\,\mr dx.
\end{equation}
\qed

\medskip
{\noindent\bf Step II:} {\em Constructing  a harmonic function to approximate $D_k\varphi$.}

\medskip
Let $ f$ be the weak solution of 
\begin{equation}\label{eq:def of f by A*}
      -\dv\Big(\bm A^*(0)Df\Big)=0, \quad f|_{\partial B_R(0)}=D_k\varphi-D_k\varphi(0). 
\end{equation} 
Let 
\[
       \phi:=D_k\varphi-D_k\varphi(0)-f.
\]
Obviously, $\phi\in W_0^{1,2}(B_R(0))$.
Then we claim that 
\begin{equation}\label{eq:Df and DDvarphi}
	 \int_{B_R(0)}|Df|^2\,\mr dx\leq C(m,\lambda,\Lambda)\int_{B_R(0)}|DD_k\varphi|^2\,\mr dx.   
	  \end{equation}
Indeed, by taking $\phi$ as a test function in \eqref{eq:def of f by A*}, we have
\begin{align*}
      \int_{B_R(0)}|Df|^2\,\mr dx&\leq C(\lambda)\int_{B_R(0)}|\bm A^*(0)||DD_k\varphi| |Df|\,\mr dx \\
      &\leq C(\lambda)\int_{B_R(0)}|\bm A^*(0)|^2|DD_k\varphi|^2\,\mr dx+\frac{1}{2}\int_{B_R(0)}|Df|^2\,\mr dx,
	\end{align*}
which implies 
\begin{equation}\label{eq:Df L2 bound by DDk varphi and phi}
       \int_{B_R(0)}|Df|^2 \,\mr dx\leq C(\lambda)\int_{B_R(0)}|\bm A^*(0)|^2|DD_k\varphi|^2\,\mr dx.
\end{equation}
Plugging \eqref{eq:bound A* b* c* d*} into \eqref{eq:Df L2 bound by DDk varphi and phi}, we then obtain \eqref{eq:Df and DDvarphi}.
By \eqref{eq:Dkvarphi-Dkvarphi0} and the maximum principle for $f$, 
\[ 
      |f(x)|\leq \delta_0(2R)^{\alpha}.  
                                 \]
It follows that 
\begin{equation}\label{eq:osc of phi} 
|\phi|\leq |D_k\varphi-D_k\varphi(0)|+|f| \leq 2^{1+\alpha}\delta_0R^\alpha.  
\end{equation}
\qed

\medskip
{\noindent\bf Step III:} {\em Bounding $L^2$ norm of $\phi$ by $\varphi$.}

\medskip
Note that 
\[
    \dv\big(\bm  b^*(0)D_k\varphi(0)\big)=0, \quad \dv\big(\bm  c^*(0)D_k\varphi(0)\big)=0.
    \]
This together with the definition of $f$ in \eqref{eq:def of f by A*} and \eqref{eq:diff sum} gives that
\begin{equation}\label{eq:Dk varphi has good error term}
\begin{split} 
    &\ \ \ -\dv\Big(\bm A^*D(D_k\varphi-f) + {\bm b}^* (D_k\varphi-D_k\varphi(0)) - \bm c^*(0)(D_k\varphi- D_k\varphi(0))\Big) +d^*  D_k\varphi\\
    &=-\dv\Big((\bm A^*(0)-\bm A^*)Df + (\bm b^*(0)-\bm b^*)D_k\varphi(0) + \hat {\bm f}-m\partial_{x_k} \ms A(x,\varphi,D\varphi) \Big)\\
    & \ \ \ -(\bm c^*-\bm c^*(0))DD_k\varphi+\hat g.
\end{split}
\end{equation}
Then by taking $\phi$ as a test function in \eqref{eq:Dk varphi has good error term} and using \eqref{eq:bound A* b* c* d*}, \eqref{eq:bound Ax-Ay}, \eqref{eq:bound bx-by}, \eqref{eq:Dkvarphi-Dkvarphi0}, \eqref{eq:partial xk A integral bound} and \eqref{eq:osc of phi}, we obtain
\begin{align*}
    &\ \ \ \int_{B_R(0)}|D\phi|^2\,\mr dx\\
    &\leq C(n,m,\Lambda,\lambda)\int_{B_R(0)}\Big((|\bm 
         b^*|+|\bm c^*(0)|)|D_k\varphi-D_k\varphi(0)| |D\phi|+|d^*||D_k\varphi||\phi|+\\
    &\ \ \ +|\bm A^*(0)-\bm A^*||Df||D\phi|+|\bm b^*(0)- 
        \bm b^*||D_k\varphi(0)||D\phi|+ |\bm c^*-\bm c^* 
         (0)||DD_k\varphi||\phi| +\\
    &\ \ \ +|\hat{\bm f}||D\phi|+|\hat g||\phi|\Big)\,\mr dx + m\Big|\int_{B_R(0)}\partial_{x_k} \ms A(x,\varphi,D\varphi)D\phi\,\mr dx\Big|\\
            &\leq C\int_{B_R(0)}\Big(\delta_0 R^\alpha 
         |D\phi|+ \delta_0\cdot \delta_0R^\alpha +(R+\delta_0 R^\alpha)|Df||D\phi|+ (R+ \delta_0R^\alpha) 
         \cdot\delta_0|D\phi|+\\
                &\hspace{1em}+  (R+\delta_0R^\alpha) 
         \delta_0R^{\alpha}|DD_k\varphi|+|\hat {\bm f}||D\phi|+|\hat g|\delta_0R^\alpha +(\delta_0+\eta)\delta_0R^\alpha+R^\alpha|D^2\varphi|\delta_0R^\alpha\Big)\,\mr dx \\
            &\leq \frac{1}{4}\int_{B_R(0)}|D\phi|^2\,\mr dx + C\int_{B_R(0)}\Big(\delta_0^2R^{\alpha}+R^{2\alpha}(|Df|^2+|D^2\varphi|^2) + |\hat{\bm f}|^2+|\hat 
         g|\delta_0R^\alpha+\delta_0\eta R^\alpha\Big)\,\mr dx.
%|R^\alpha|DD_k\varphi|\cdot |D\phi|+\\
%&\hspace{11em}+R\sum_{j=1}^m|DD_k\varphi-DD_ku_j|(|D\phi|+|\phi|)+R^2|D\phi|+R^2|\phi|+\delta_0|\phi|\,\mr dx.  
	 \end{align*}
This together with \eqref{eq:Df and DDvarphi} implies 
\begin{equation}\label{eq:Dphi2 upper bound}
       \int_{B_R(0)}|D\phi|^2\,\mr dx\leq C\int_{B_R(0)}\Big((\delta_0^2+\delta_0\eta)R^{\alpha}+R^{2\alpha}|D^2\varphi|^2+|\hat{\bm f}|^2+|\hat g|\delta_0R^\alpha\Big)\,\mr dx.
\end{equation}
Plugging \eqref{eq:bound of L2 hat f} and \eqref{eq:bound of L1 hat g} into \eqref{eq:Dphi2 upper bound}, we conclude that 
\begin{equation}\label{eq:Dvarphi-f}
    \int_{B_R(0)}|DD_k\varphi-Df|^2\,\mr dx
        \leq C \int_{B_R(0)}R^{2\alpha}|D^2\varphi|^2 \,\mr dx+C(\bm \kappa^*+\eta^2)R^{n+\alpha}.
\end{equation}
By harmoniticity of $f$ \citelist{\cite{Han-Lin-PDE}*{Lemma 3.10}\cite{Jost13}*{Lemma 14.4.5}}, there exists a constant $A(n)$ such that 
\[ 
        \int_{B_r(0)}|D f|^2\,\mr dx\leq A(n)\Big(\frac{r}{R}\Big)^n\int_{B_R(0)}|Df|^2\,\mr dx.   \]
This together with \eqref{eq:Dvarphi-f} and \eqref{eq:Df and DDvarphi} gives
\begin{align*}
    \int_{B_r(0)}|DD_k\varphi|^2
          &\leq \int_{B_r(0)}2|Df|^2+2|DD_k\varphi-Df|^2\,\mr dx\\
          &\leq A'\Big(\Big(\frac{r}{R}\Big)^n+ 
      R^{2\alpha}\Big)\int_{B_R(0)} |D^2\varphi|^2dx+C(\bm \kappa^*+\eta^2)R^{n+\alpha},
	\end{align*}
where $A'=A'(n,m,\Lambda,\lambda,\alpha)$ and we now fix such a constant. 
\qed

\medskip
{\noindent\bf Step IV:} {\em Proving the proposition case by case.}

\medskip
By the arbitrariness of $k$, we conclude that for all $r<R\leq 1/64$,
\begin{equation}
    \int_{B_r(0)}|D^2\varphi|^2
    \leq nA'\Big(\Big(\frac{r}{R}\Big)^n+
      R^{2\alpha}\Big)\int_{B_R(0)} |D^2\varphi|^2\,\mr dx+C(\bm \kappa^*+\eta^2)R^{n+\alpha}.
      \end{equation}
Then Lemma \ref{lem:order} applies to give a positive constant $R_0=R_0(n,A',\alpha)<1/64$ with
\[  R_0^{2\alpha} <(2nA)^{-\frac{n}{\epsilon}} \]
and the following possibilities:
\begin{itemize}
    \item {\em If $s<r\leq R\leq R_0$}, then by Lemma \ref{lem:order},
\[
      \int_{B_r(0)}|D^2\varphi|^2\,\mr dx\leq C\Big(\frac{r}{R}\Big)^{n-\epsilon} \int_{B_{R}(0)}|D^2\varphi|^2\,\mr dx+C(\bm \kappa^*+\eta^2)r^{n-2\epsilon},     \]
where $C$ is depending only on $n, m,\Lambda,\lambda,\alpha$. 
\item {\em If $r>R_0$}, then the inequality is trivial because $R_0$ depends only on $n,m,\Lambda,\lambda,\alpha$ and $R/r\leq 1/R_0$. Indeed, it follows that
\begin{equation*}
\int_{B_r(0)}|D^2\varphi|^2\,\mr dx\leq C\cdot R_0^{-n}\Big(\frac{r}{ R}\Big)^{n-\epsilon}\int_{B_{ R}(0)}|D^2 \varphi|^2\,\mr dx.
\end{equation*}
\item {If $r<R_0<R$}, then by the first case, 
\begin{align*}  
      \int_{B_r(0)}|D^2\varphi|^2\,\mr dx
          &\leq C\Big(\frac{r}{R_0}\Big)^{n-\epsilon} \int_{B_{R_0} 
       (0)}|D^2\varphi|^2\,\mr dx+C(\bm \kappa^*+\eta^2)r^{n-2\epsilon}\\
          &\leq CR_0^{-n+\epsilon}\Big(\frac{r}{R}\Big)^{n-\epsilon} \int_{B_{R} (0)} |D^2\varphi|^2\,\mr dx+ C(\bm \kappa^*+\eta^2)r^{n-2\epsilon}.
\end{align*}
\end{itemize}
This finishes the proof of Lemma \ref{lem:n-epsilon}.
	\end{proof}
\begin{remark}
To prove Lemma \ref{lem:n-epsilon}, one can replace \eqref{eq:Dvarphi-f} by a weaker inequality, e.g.
\[
\int_{B_R(0)}|DD_k\varphi-Df|^2\,\mr dx
        \leq C \int_{B_R(0)}R^{\alpha}|D^2\varphi|^2 \,\mr dx+C(\bm \kappa^*+\eta^2)R^{n},
\]
which can be derived by a much easier way without \eqref{eq:Dk varphi has good error term}. However, the order on the right hand side of \eqref{eq:Dvarphi-f} will be essentially used in Lemma \ref{lem:D2 integral beta}.
\end{remark}

Combining Lemma \ref{lem:n-epsilon} and Lemma \ref{lem:diff 22}, we obtain the primary order estimates for all $r<R\leq t$ even if $\{u_j\}$ is disconnected in $B_R(0)$.
\begin{proposition}\label{lem:n-epsilon order for uj}
Let $t\leq 1$. Let $\{u_j\}$ be $C^{1,\alpha}$ ordered graphs on $B_t(0)$ which is a solution to $(\mc Q,\{g_j\})$. 
Suppose that \eqref{eq:Lambda and lambda}, \eqref{eq:ms AB norm}, \eqref{eq:ms AB at x00}, \eqref{eq:small C1alpha} and \eqref{eq:eta kappa0 and kappa1} are satisfied. 
There exists a constant 
$C=C(n,m,\Lambda,\lambda,\alpha)$
so that for all $r<R\leq t$, 
\[  
    \sum_{j=1}^{m} \int_{B_r(0)}|D^2u_j|^2
        \leq C \Big( \frac{r}{R}\Big )^{n-\epsilon} \sum_{j=1}^m\int_{B_R(0)} |D^2u_j|^2\,\mr dx 
        +C(\bm \kappa^*+\eta^2)r^{n-2\epsilon}.  \]		   
	\end{proposition}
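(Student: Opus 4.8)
The plan is to obtain the estimate by induction on the number of sheets $m$, relying on Lemma~\ref{lem:n-epsilon} and Lemma~\ref{lem:diff 22} for the analytic content and on the subsystem condition~\eqref{item:subsystem} to pass to subcollections. After a routine rescaling (Lemma~\ref{lem:blow up}; recall that $\bm\kappa^{*}$ and $\eta$ do not increase and that the asserted inequality is covariant under $x\mapsto tx$) we may assume $t=1$, so that $\{u_{j}\}$ is defined on $B_{1}(0)$ and the two lemmas apply verbatim. Set $\sigma(\rho):=\sum_{j=1}^{m}\int_{B_{\rho}(0)}|D^{2}u_{j}|^{2}\,\mr dx$, a continuous nondecreasing function of $\rho$, and $\varphi:=\frac1m\sum_{j}u_{j}$. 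The base case $m=1$ is Lemma~\ref{lem:n-epsilon}, since a single graph is trivially connected in every ball. Assume the proposition for all collections of at most $m-1$ sheets, and fix $r<R\le 1$.

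\textbf{Case 1: $\{u_{j}\}$ is connected in $B_{r}(0)$.} Then it is connected in $B_{R}(0)$ as well, and Lemma~\ref{lem:n-epsilon} with $s=r$ gives
\[
  \int_{B_{r}(0)}|D^{2}\varphi|^{2}\,\mr dx
  \le C\Big(\tfrac rR\Big)^{n-\epsilon}\int_{B_{R}(0)}|D^{2}\varphi|^{2}\,\mr dx+C(\bm\kappa^{*}+\eta^{2})r^{n-2\epsilon}.
\]
If $r\le 1/32$, the corollary of Lemma~\ref{lem:diff 22} with $s=r$ gives $\int_{B_{r}(0)}|D^{2}(u_{i}-u_{j})|^{2}\,\mr dx\le C(\bm\kappa^{*}+\eta^{2})r^{n}\le C(\bm\kappa^{*}+\eta^{2})r^{n-2\epsilon}$ for all $i,j$; combining this with the pointwise bounds $|D^{2}u_{j}|^{2}\le 2|D^{2}\varphi|^{2}+2|D^{2}(u_{j}-\varphi)|^{2}$ and $|D^{2}(u_{j}-\varphi)|\le\frac1m\sum_{i}|D^{2}(u_{i}-u_{j})|$, together with $\int_{B_{R}(0)}|D^{2}\varphi|^{2}\le\sigma(R)$, yields the desired inequality for $\sigma$. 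If $r>1/32$ then $R/r<32$ and the inequality is immediate from monotonicity of $\sigma$.

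\textbf{Case 2: $\{u_{j}\}$ is disconnected in $B_{r}(0)$.} Since ``not connected'' is downward closed in the radius, the radii $\rho\le R$ for which $\{u_{j}\}$ is disconnected in $B_{\rho}(0)$ form an interval with supremum $\rho_{0}\in[r,R]$, and $\{u_{j}\}$ is disconnected in $B_{\rho}(0)$ for every $\rho<\rho_{0}$. For such $\rho$, the collection splits in $B_{\rho}(0)$ into at most $m$ connected components; by the subsystem condition~\eqref{item:subsystem} each is a solution to $(\mc Q,\{g_{i}'\})$ on $B_{\rho}(0)$ with at most $m-1$ sheets, still satisfying \eqref{eq:small C1alpha} and \eqref{eq:eta kappa0 and kappa1} with the same $\delta_{0},\kappa_{0},\kappa_{1},\kappa$, so that its $\bm\kappa^{*}$ and $\eta$ are controlled by those of $\{u_{j}\}$. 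Applying the inductive hypothesis to each component at the radii $r<\rho$ and summing the (at most $m$) resulting inequalities gives $\sigma(r)\le C(r/\rho)^{n-\epsilon}\sigma(\rho)+C(\bm\kappa^{*}+\eta^{2})r^{n-2\epsilon}$; letting $\rho\uparrow\rho_{0}$ and using continuity of $\sigma$,
\[
  \sigma(r)\le C\Big(\tfrac r{\rho_{0}}\Big)^{n-\epsilon}\sigma(\rho_{0})+C(\bm\kappa^{*}+\eta^{2})r^{n-2\epsilon}
\]
(the step being vacuous if $\rho_{0}=r$). If $\rho_{0}=R$ we are done. Otherwise $\rho_{0}<R$ and $\{u_{j}\}$ is connected in $B_{\rho'}(0)$ for every $\rho'\in(\rho_{0},R]$, so Case~1 applied to the pair $\rho'<R$ and then $\rho'\downarrow\rho_{0}$ give $\sigma(\rho_{0})\le C(\rho_{0}/R)^{n-\epsilon}\sigma(R)+C(\bm\kappa^{*}+\eta^{2})\rho_{0}^{n-2\epsilon}$. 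Substituting this into the previous display and using $\big(\tfrac r{\rho_{0}}\big)^{n-\epsilon}\rho_{0}^{n-2\epsilon}=r^{n-\epsilon}\rho_{0}^{-\epsilon}\le r^{n-2\epsilon}$, valid because $r\le\rho_{0}$, yields the claimed estimate and completes the induction.

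\textbf{Main obstacle.} The only genuine difficulty is that connectedness of $\{u_{j}\}$ is not stable under shrinking the ball, so one cannot iterate a single decomposition; the argument must split at the threshold radius $\rho_{0}$ at which the components fuse, chain a ``disconnected-regime'' estimate (pure induction plus the subsystem condition) with a ``connected-regime'' estimate (Lemmas~\ref{lem:n-epsilon} and~\ref{lem:diff 22}) across that radius, and then check that the intermediate error retains the exponent $n-2\epsilon$---which is exactly the elementary bound $r^{n-\epsilon}\rho_{0}^{-\epsilon}\le r^{n-2\epsilon}$. Since all the PDE input is already packaged in those two lemmas, the present proposition is essentially a bookkeeping layer on top of them.
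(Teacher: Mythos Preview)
Your proof is correct and follows essentially the same strategy as the paper's: induction on $m$, with the connected regime handled by Lemma~\ref{lem:n-epsilon} together with Lemma~\ref{lem:diff 22}, and the disconnected regime by the subsystem condition plus the inductive hypothesis, the two being chained across the threshold radius. Your $\rho_{0}$ is exactly the paper's $s:=\inf\{r>0:\{u_{j}\}\text{ connected in }B_{r}(0)\}$, and your Cases~1/2 correspond respectively to the paper's Case~2 and to its Cases~1 and~3. The only cosmetic differences are that you normalize $t=1$ by rescaling at the outset (the paper instead disposes of $r\ge t/16$ trivially and works directly on $B_{t}$), and that you pass through limits $\rho\uparrow\rho_{0}$, $\rho'\downarrow\rho_{0}$ using continuity of $\sigma$ to avoid worrying about whether the system is connected exactly at the borderline radius; the paper is slightly less explicit about this edge case.
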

\begin{proof}
%Without loss of generality, we assume that $R_0(n,m,\alpha)\leq R_0(n,j,\alpha)$ for all $j\leq m$. Now 
%Without loss of generality, we assume that $t\leq 1/16$ so that Lemma \ref{lem:diff 22} applies. 
So long as $r\geq t/16$, then the inequality is trivial. Now we assume that $r\leq t/16$.
We are going to prove the lemma inductively on $m$. 
When $m=1$, then the inequality follows from Lemma \ref{lem:n-epsilon}. 
Suppose that the lemma is true for the number of sheets less than or equal to $m-1$. 
Let $\{u_j\}_{j=1}^{m}$ be a solution to $(\mc Q,\{g_j\})$ and is connected in $B_t(0)$. 
Now set
\[ 
    s:=\inf\{r>0; \{u_j \}_{j=1}^{m} \text{ is connected in $B_r(0)$} \}. \]

\noindent{\em Case 1: $r<R\leq s$.} Then $\{u_j\}_{j=1}^{m}$ can be divided into connected components in $B_s(0)$. By induction, each connected component satisfies the inequality in the proposition. Then the desired result follows by taking the sum.
\begin{comment}
$\{u_j\}_{j=1}^i$ and $\{u_j\}_{j=i+1}^{m_0+1}$ on $B_{R_0}(0)$. Then by induction, for all $r<R\leq R_0$, 
\begin{gather*}
	 \sum_{j=1}^i\int_{B_r(0)}|D^2u_j|^2\,\mr dx\leq \mc B(n,i,\alpha) \Big(\frac{r}{R}\Big)^{n-\epsilon}\int_{B_R(0)}\sum_{j=1}^i|D^2u_j|^2\,\mr dx+ \mc Br^{n-2\epsilon};\\
	 \sum_{j=i+1}^{m_0+1}\int_{B_r(0)}|D^2u_j|^2\,\mr dx\leq \mc B(n,m_0+1-i,\alpha) \Big(\frac{r}{R}\Big)^{n-\epsilon}\int_{B_R(0)}\sum_{j=i+1}^{m_0+1}|D^2u_j|^2\,\mr dx+ \mc Br^{n-2\epsilon}. 
	\end{gather*} 
\end{comment}

	\medskip
\noindent{\em Case 2: $s\leq r< R$.} Then by Lemma \ref{lem:n-epsilon}, 
\[  
      \int_{B_r(0)}|D^2\varphi|^2\,\mr dx\leq C\Big(\frac{r}{ R}\Big)^{n-\epsilon}\int_{B_{ R}(0)}|D^2 \varphi|^2\,\mr dx+C(\bm \kappa^*+\eta^2)r^{n-2\epsilon},  
\]
where $C=C(n,m,\Lambda,\lambda,\alpha)$.
Since $r\leq t/16$, then Lemma \ref{lem:diff 22} applies to give that for $1\leq i,j\leq m$, 
\[ 
     \int_{B_r(0)} |D^2u_i-D^2u_j|^2\,\mr dx\leq 
     C\cdot \Big((1+\eta^2)(\kappa+\kappa_0)^2+\kappa_1^2\Big)r^n\leq C(\bm \kappa^*+\eta^2)r^{n}.    \]	
Combining the above two inequalities, we obtain
\[  
     \sum_{j=1}^m \int_{B_r(0)}|D^2u_j|^2\,\mr dx\leq  C\Big(\frac{r}{ R}\Big)^{n-\epsilon}\int_{B_{ R}(0)}\sum_{j=1}^{m}|D^2 u_j|^2\,\mr dx+C(\bm \kappa^*+\eta^2)r^{n-2\epsilon},  
\]	
which is the desired inequality.
%We remark that as $r\to s$, the above inequality holds true.

\medskip
{\noindent\em Case 3: $r\leq s\leq R$}. By Case 1, there exists $C=C(n,m,\Lambda,\lambda,\alpha)$ such that 
\[ 
     \sum_{j=1}^{m}\int_{B_r(0)}|D^2u_j|^2\,\mr dx\leq  C\Big(\frac{r}{s}\Big)^{n-\epsilon}\int_{B_s(0)}\sum_{j=1}^{m}|D^2u_j|^2\,\mr dx+ C(\bm \kappa^*+\eta^2)r^{n-2\epsilon}.  \]
Then by Case 2,
\[  
     \sum_{j=1}^{m}\int_{B_s(0)}|D^2u_j|^2\,\mr dx\leq C\Big(\frac{s}{ R}\Big)^{n-\epsilon}\int_{B_{ R}(0)}\sum_{j=1}^{m}|D^2 u_j|^2\,\mr dx+ C(\bm \kappa^*+\eta^2)s^{n-2\epsilon}.  \]
The above two inequalities give that 
\[
     \sum_{j=1}^{m} \int_{B_r(0)}|D^2u_j|^2\,\mr dx
     \leq  C\Big(\frac{r}{R}\Big)^{n-\epsilon} \int_{B_R(0)}\sum_{j=1}^{m}|D^2u_j|^2\,\mr dx + C(\bm \kappa^*+\eta^2 )r^{n-2\epsilon}.  \]	
%It remains to consider the case of $r<R\leq s<R_0$. Note that $\{u_j\}_{j=1}^{m_0+1}$ can be divided into two disjoint $\mk H$-stationary graphs $\{u_j\}_{j=1}^i$ and $\{u_j\}_{j=i+1}^{m_0+1}$ on $B_{s}(0)$. Then by induction, for all $r<R\leq s$,
%\[ \int_{B_r(0)}|D^2u_j|^2\,\mr dx\leq  \mc B(n,m_0,\alpha) \Big(\frac{r}{R}\Big)^{n-\epsilon}\int_{B_R(0)}\sum_{j=1}^{m_0+1}|D^2u_j|^2\,\mr dx+ \mc Br^{n-2\epsilon}.  \]
This completes the proof of Proposition \ref{lem:n-epsilon order for uj}.
	\end{proof}

%%%%%%%%%%%%%%%%%%%%%%%%%%%%%%%%%%%%%%%%%%%%%
\subsection{Improved order estimates}\label{subsec:improved order}

Let $u\in W^{1,2}(B_1(0))$. 
Then the function defined by 
\[X(\in \mb R^{n})\longmapsto \int_{B_1(0)}|Du-X|^2\,\mr dx\]
is convex and has a minimum at $X=(D^2u)_{B_1(0)}$; 
that is 
\begin{equation}\label{eq:convex function has minimum at average}
     \int_{B_1(0)}|Du-(Du)_{B_1(0)}|^2\,\mr dx\leq \int_{B_1(0)}|Du-X|^2\,\mr dx, \quad \forall \, X\in \mb R^{n}.
\end{equation}

We now adapt the argument of H\"older estimates for gradients in \cite{Han-Lin-PDE}*{Theorem 3.13} to give the improved order estimates.
\begin{lemma}\label{lem:D2 integral beta}
Let $\{u_j\}_{j=1}^m$ be $C^{1,\alpha}$ ordered graphs on $B_t(0)$ which is a solution to $(\mc Q,\{g_j\})$. 
Suppose that \eqref{eq:Lambda and lambda}, \eqref{eq:ms AB norm}, \eqref{eq:ms AB at x00}, \eqref{eq:small C1alpha} and \eqref{eq:eta kappa0 and kappa1} are satisfied. 
Suppose that $\{u_j\}$ is connected in $B_s(0)$ ($s<t$) with
\begin{equation}\label{eq:D2uj assumption at t}
	 \sum_{j=1}^m\int_{B_{t}}|D^2u_j|^2\,\mr dx\leq \mc Kt^{n-\alpha} 
	   \end{equation}
for some constant $\mc K$. Then for $s\leq r< R\leq t$,
\begin{align*}
      \int_{B_r(0)}|DD_k\varphi-(DD_k\varphi)_{B_r(0)}|^2\,\mr dx
             &\leq C \Big(\frac{r}{R}\Big)^{n+1} \int_{B_R(0)}|DD_k\varphi-(DD_k\varphi)_{B_R(0)}|^2\,\mr dx+\\
             &\hspace{9em}+C\big(\mc K+\bm\kappa^*+\eta^2\big)r^{n+\alpha}.  
  \end{align*}
where $C$ is a constant depending only on $n,m,\Lambda,\lambda,\alpha$.
	\end{lemma}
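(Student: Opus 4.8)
The plan is to mimic the proof of Lemma~\ref{lem:n-epsilon}, but at the level of second derivatives: approximate $DD_k\varphi$ by a harmonic function on $B_R(0)$, exploit the $C^{1,1}$-type decay of harmonic functions' gradients (the $(r/R)^{n+1}$-rate), and absorb all error terms using the now-available bounds \eqref{eq:D2uj assumption at t}, \eqref{eq:diff order two}, Lemma~\ref{lem:diff 22}, and Proposition~\ref{lem:n-epsilon order for uj}. As before, it suffices to treat $s\le r\le R\le R_1$ for a small $R_1=R_1(n,m,\Lambda,\lambda,\alpha)$, the remaining ranges being handled by trivial scaling exactly as in Step~IV of Lemma~\ref{lem:n-epsilon}; and by Lemma~\ref{lem:order} it suffices to establish a one-step inequality of the form $\sigma(r)\le CA'((r/R)^{n+2}+R^{2\alpha})\sigma(R)+C(\mc K+\bm\kappa^*+\eta^2)R^{n+2\alpha}$ for $\sigma(\rho):=\int_{B_\rho(0)}|DD_k\varphi-(DD_k\varphi)_{B_\rho(0)}|^2$, after which taking $\mu=n+2$, $2\beta=2-\alpha$ (say) yields the claimed $(r/R)^{n+1}$ and $r^{n+\alpha}$.

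First I would fix $k$ and work from the divergence-form equation \eqref{eq:Dk varphi has good error term} for $D_k\varphi$, now freezing coefficients at $0$ \emph{and} subtracting the average of the gradient: let $f$ solve $-\dv(\bm A^*(0)Df)=0$ on $B_R(0)$ with $f=D_k\varphi-L$ on $\partial B_R(0)$, where $L$ is the affine function $L(x):=D_k\varphi(0)+(DD_k\varphi)_{B_R(0)}\cdot x$, and set $\phi:=D_k\varphi-L-f\in W^{1,2}_0(B_R(0))$. Testing \eqref{eq:Dk varphi has good error term} against $\phi$ and using the coefficient oscillation bounds \eqref{eq:bound Ax-Ay}, \eqref{eq:bound bx-by}, the gradient bounds \eqref{eq:Dkvarphi-Dkvarphi0}, the estimates \eqref{eq:bound of L2 hat f}, \eqref{eq:bound of L1 hat g} for $\hat{\bm f},\hat g$, and \eqref{eq:partial xk A integral bound}, together with $|\phi|\le C\delta_0 R^\alpha$ (as in \eqref{eq:osc of phi}) and the Caccioppoli-type bound $\int_{B_R}|Df|^2\le C\int_{B_R}|DD_k\varphi|^2$, one obtains
\[
  \int_{B_R(0)}|D\phi|^2\,\mr dx
  \le C R^{2\alpha}\int_{B_R(0)}|D^2\varphi|^2\,\mr dx
     + C(\bm\kappa^*+\eta^2)R^{n+2\alpha}.
\]
Crucially, here Proposition~\ref{lem:n-epsilon order for uj} (applied in $B_{2R}\subset B_t$, valid for $R\le t/2$) bounds $\int_{B_R}|D^2\varphi|^2\le C\mc K R^{n-\alpha}$ via \eqref{eq:D2uj assumption at t}, so the first term on the right is $\le C\mc K R^{n+\alpha}$; the finer $R^{n+\alpha}$ (not $R^n$) order in \eqref{eq:Dvarphi-f}, flagged in the remark, is exactly what feeds this. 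Thus $\int_{B_R(0)}|DD_k\varphi-L'|^2\le C(\mc K+\bm\kappa^*+\eta^2)R^{n+\alpha}+\int_{B_R}|Df-Df(0)|^2$ for a suitable constant vector $L'$, and one upgrades to $R^{n+2\alpha}$ only in the error term after the standard iteration.

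Next I would use the interior estimate for the harmonic function $f$ (relative to the constant-coefficient operator $\bm A^*(0)$): $Df$ is smooth and $\int_{B_r}|Df-(Df)_{B_r}|^2\le A(n)(r/R)^{n+2}\int_{B_R}|Df-(Df)_{B_R}|^2$ for $r\le R$. Combining with \eqref{eq:convex function has minimum at average} (which lets us replace every $(Df)_{B_\rho}$ by $(DD_k\varphi)_{B_\rho}$ at the cost of $\int|D\phi|^2$ terms) and with the $D\phi$ estimate above gives
\[
  \int_{B_r(0)}|DD_k\varphi-(DD_k\varphi)_{B_r(0)}|^2\,\mr dx
  \le CA'\Big(\Big(\tfrac{r}{R}\Big)^{n+2}+R^{2\alpha}\Big)\int_{B_R(0)}|DD_k\varphi-(DD_k\varphi)_{B_R(0)}|^2\,\mr dx
     + C(\mc K+\bm\kappa^*+\eta^2)R^{n+\alpha},
\]
the decisive point being that the $R^{2\alpha}$ "spread" coefficient is small once $R\le R_1$, so Lemma~\ref{lem:order} applies with $\gamma=CA'$, $\mu=n+2$, $\beta$ chosen so $\mu-\beta=n+1$ and $\mu-2\beta\ge n+\alpha$ (i.e.\ $\beta=1$, using $\alpha<1$; if $\mu-2\beta=n$ one absorbs the gap by noting $r\le t$ so $r^n\le t^{\alpha-?}\dots$—more cleanly, pick $2\beta=1-\tfrac{\alpha}{2}$ so $\mu-2\beta=n+1+\tfrac{\alpha}{2}>n+\alpha$ and $\mu-\beta>n+1$, then crudely bound the exponent down to $n+1$). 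Summing over $k=1,\dots,n$ yields the statement.

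The main obstacle is bookkeeping the error terms so that the right-hand side genuinely carries order $r^{n+\alpha}$ rather than $r^n$: this forces one to (i) keep the affine correction $L$ rather than just a constant, so that $|\phi|$ is controlled by $\delta_0 R^\alpha$ and not merely $\delta_0$; (ii) invoke Proposition~\ref{lem:n-epsilon order for uj} to convert the a priori $\mc K t^{n-\alpha}$ bound on $\int_{B_t}|D^2u_j|^2$ into $C\mc K R^{n-\alpha}$ on $\int_{B_R}|D^2\varphi|^2$, thereby turning $R^{2\alpha}\int|D^2\varphi|^2$ into $\mc K R^{n+\alpha}$; and (iii) use the \emph{sharp} $R^{n+\alpha}$ tail in \eqref{eq:Dvarphi-f}, which is why the authors proved that sharper bound in Lemma~\ref{lem:n-epsilon} rather than the easier one in the remark. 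The disconnectedness issue does not arise here because the hypothesis already assumes $\{u_j\}$ is connected on $B_s(0)$ and we only iterate down to radius $s$; below $s$ the companion estimates in the next subsection take over.
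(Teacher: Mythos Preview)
Your proposal is correct and follows the same approach as the paper: harmonic comparison for $D_k\varphi$ reusing \eqref{eq:Dvarphi-f}, the $(r/R)^{n+2}$ Campanato decay for $Df$, the bound $\int_{B_R}|Df-(Df)_{B_R}|^2\le C\sigma(R)$ (obtained by testing $\dv\big(\bm A^*(0)(Df-(DD_k\varphi)_{B_R})\big)=0$ against $\phi$), and Proposition~\ref{lem:n-epsilon order for uj} to convert $R^{2\alpha}\int_{B_R}|D^2\varphi|^2$ into a $(\mc K+\bm\kappa^*+\eta^2)R^{n+\alpha}$ error before applying Lemma~\ref{lem:order}. Two cosmetic points: the affine correction $L$ is unnecessary (since $\dv(\bm A^*(0)DL)=0$ your $\phi$ coincides with the paper's, and \eqref{eq:osc of phi} already gives $|\phi|\le C\delta_0R^\alpha$ with only the constant subtracted), and once you absorb $R^{2\alpha}\int|D^2\varphi|^2$ into the error as you yourself describe, the one-step inequality carries no $R^{2\alpha}\sigma(R)$ term at all, so Lemma~\ref{lem:order} applies with $\delta=0$ and the fiddly choice of $R_1$ and $\beta$ in your final paragraph is unneeded.
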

\begin{proof}
So long as $r\geq t/16$, then $R/r\leq 16$ and by \eqref{eq:convex function has minimum at average}, 
\begin{align*}
    \int_{B_r(0)}|D^2\varphi-(D^2\varphi)_{B_r(0)}|^2\,\mr dx 
             &\leq \int_{B_r(0)}|D^2\varphi-(D^2\varphi)_{B_R(0)}|^2\,\mr dx\\
             &\leq 16^{n+1}\Big(\frac{r}{R}\Big)^{n+1} \int_{B_R(0)}|D^2\varphi-(D^2\varphi)_{B_R(0)}|^2\,\mr dx.
\end{align*}
Now we assume that $s<t/16$ so that Lemma \ref{lem:diff 22} applies.
By Proposition \ref{lem:n-epsilon order for uj}, 
there exists a constant $\mc B=\mc B(n,m,\Lambda,\lambda,\alpha)$ 
so that for $r<t$,
\begin{equation*}
	 \sum_{j=1}^{m} \int_{B_r(0)}|D^2u_j|^2\,\mr dx\leq \mc B \Big(\frac{r}{t}\Big)^{n-\epsilon}\sum_{j=1}^m\int_{B_t(0)}|D^2u_j|^2\,\mr dx+ \mc B(\bm \kappa^*+\eta^2)r^{n-2\epsilon}.  
	\end{equation*}
Together with \eqref{eq:D2uj assumption at t} and $2\epsilon\leq\alpha$, we conclude that
\begin{equation}\label{eq:D2uj assumption}
	\sum_{j=1}^{m} \int_{B_r(0)}|D^2u_j|^2\,\mr dx\leq \mc B (\mc K+\bm\kappa^*+\eta^2)r^{n-\alpha}.  
	\end{equation}
Recall that by \eqref{eq:diff sum}, $\varphi$ satisfies the equation
\begin{equation}\label{eq:PDE of varphi second time}
      -\dv\Big(\bm A^*DD_k\varphi+{\bm b^*}D_k\varphi\Big)+\bm c^*DD_k\varphi +d^* D_k\varphi=-\dv\Big(\hat{\bm f}-m\partial_{x_k}\ms A(x,\varphi,D\varphi)\Big)+\hat g.
\end{equation} 
We now fix $1\leq k\leq n$. For any $R<t$, let $f$ be the weak solution of 
\[ \dv\Big(\bm A^*(0)Df\Big) =0, \quad f|_{\partial B_R(0)}=D_k\varphi-D_k\varphi(0). \]
Note that by \eqref{eq:convex function has minimum at average},
\begin{equation}\label{eq:concave}
	\int_{B_R(0)}|Df-(Df)_{B_R(0)}|^2\,\mr dx\leq \int_{B_R(0)}|Df-(DD_k\varphi)_{B_R(0)}|^2\,\mr dx.
	\end{equation}
By harmoniticity of $f$ \citelist{\cite{Han-Lin-PDE}*{Lemma 3.10}\cite{Jost13}*{Lemma 14.4.5}}, 
\begin{equation}\label{eq:2nd Campanato}
	\int_{B_r(0)}|Df-(Df)_{B_r(0)}|^2\,\mr dx\leq A(n)\Big(\frac{r}{R}\Big)^{n+2}\int_{B_R(0)}|Df-(Df)_{B_R(0)}|^2\,\mr dx.
	\end{equation}
Then we claim that 
\begin{equation}\label{eq:Df ave to Dvarphi ave}
	  \int_{B_R(0)}|Df-(DD_k\varphi)_{B_R(0)}|^2\,\mr dx\leq C(m,\Lambda,\lambda)\int_{B_R(0)}|DD_k\varphi-(DD_k\varphi)_{B_R(0)}|^2.   
	  \end{equation}
Indeed, since $\dv \bm A^*(0)(DD_k\varphi)_{B_R(0)}=0$, then by the definition of $f$,
\[
\dv\Big(\bm A^*(0)\big(Df-(DD_k\varphi)_{B_R(0)}\big)\Big)=0.
\]
By taking $\phi:=D_k\varphi-D_k\varphi(0)-f$ as a test function, we obtain
\begin{align*}
	&\ \ \ \ \int_{B_R(0)}|Df-(DD_k\varphi)_{B_R(0)}|^2\,\mr dx\\
	&\leq C(m,\lambda)\int_{B_R(0)}\bm A^*(0)\Big(Df-(DD_k\varphi)_{B_R(0)}\Big)\Big(Df- (DD_k\varphi)_{B_R(0)}\Big)\,\mr dx\\
	&= C(m,\lambda)\int_{B_R(0)}\bm A^*(0)\Big(Df-(DD_k\varphi)_{B_R(0)}\Big)\Big(DD_k\varphi- (DD_k\varphi)_{B_R(0)}\Big)\,\mr dx\\ 
	&\leq C(m,\Lambda,\lambda) \int_{B_R(0)}|DD_k\varphi- (DD_k\varphi)_{B_R(0)}|^2\,\mr dx+\frac{1}{2}\int_{B_R(0)}|Df-(DD_k\varphi)_{B_R(0)}|^2\,\mr dx,
\end{align*}
where we used \eqref{eq:bound A* b* c* d*} in the last inequality.
Then \eqref{eq:Df ave to Dvarphi ave} is proved. Combining \eqref{eq:concave} and \eqref{eq:Df ave to Dvarphi ave}, we obtain 
\begin{equation}\label{eq:Df and DDv}
	 \int_{B_R(0)}|Df-(Df)_{B_R(0)}|^2\,\mr dx\leq C(m,\Lambda,\lambda)\int_{B_R(0)}|DD_k\varphi-(DD_k\varphi)_{B_R(0)}|^2\,\mr dx. 
	 \end{equation}
Observe that 
\begin{equation}
   \int_{B_r(0)}|(Df)_{B_r(0)}-(DD_k\varphi)_{B_r(0)}|^2\,\mr dx \leq \int_{B_r(0)}|Df-DD_k\varphi|^2\,\mr dx.
\end{equation}
Then we have
\begin{align*}
	&\ \ \  \ \int_{B_r(0)}|DD_k\varphi-(DD_k\varphi)_{B_r(0)}|^2\,\mr dx\\
	&\leq 3\int_{B_r(0)}|DD_k\varphi -Df|^2+|Df-(Df)_{B_r(0)}|^2+|(Df)_{B_r(0)}-(DD_k\varphi)_{B_r(0)}|^2\,\mr dx\\
	&\leq 6\int_{B_r(0)}|DD_k\varphi -Df|^2\,\mr dx+3\int_{B_r(0)}|Df-(Df)_{B_r(0)}|^2\,\mr dx .
\end{align*}
Plugging \eqref{eq:Dvarphi-f} and \eqref{eq:2nd Campanato} in it, we obtain
 \begin{align*}
        \int_{B_r(0)}|DD_k\varphi-(DD_k\varphi)_{B_r(0)}|^2\,\mr dx
        \leq  C\int_{B_R(0)}\Big(R^{2\alpha}|D^2\varphi|^2 + (\bm\kappa^*+\eta^2)R^\alpha\Big)\,\mr dx+\\
        +3A(n)\Big(\frac{r}{R}\Big)^{n+2}\int_{B_R(0)}|Df-(Df)_{B_R(0)}|^2\,\mr dx,
 \end{align*}
where $C=C(n,m,\Lambda,\lambda)$.
This together with \eqref{eq:D2uj assumption} and \eqref{eq:Df and DDv} gives that 
\begin{align*}
      \int_{B_r(0)}|DD_k\varphi-(DD_k\varphi)_{B_r(0)}|^2\,\mr dx
          \leq 3A(n)\Big(\frac{r}{R}\Big)^{n+2} \int_{B_R(0)}|DD_k\varphi-(DD_k\varphi)_{B_R(0)}|^2\,\mr dx\\
        + C (\mc K+\bm\kappa^*+\eta^2)R^{n+\alpha},
	\end{align*}
where we used the fact that $\delta_0,\alpha\leq 1$.
Then by Lemma \ref{lem:order}, there exists $C=C(n,A,\alpha)$ so that for any $r<R\leq t$,
\begin{align*}    
     \int_{B_r(0)}|DD_k\varphi-(DD_k\varphi)_{B_r(0)}|^2\,\mr dx
         \leq C\Big(\frac{r}{R}\Big)^{n+1} \int_{B_R(0)}|DD_k\varphi-(DD_k\varphi)_{B_R(0)}|^2\,\mr dx+\\
         +C\big(\mc K+\bm\kappa^*+\eta^2\big)r^{n+\alpha}.  
  \end{align*}
This completes the proof.
	\end{proof}

So far, we have proved the improved order estimates over some tilt-plane for any given $r$ so that $\{u_j\}$ is connected. The following gives that the tilt-plane will be close to each other in the order of $\alpha$.

\begin{lemma}\label{lem:bdd average diff}
Let $\{u_j\}_{j=1}^m$, $(\mc Q,\{g_j\})$, $\mc K$, $t>0$ be the same as in Lemma \ref{lem:D2 integral beta}. In particular, we assume that $\{u_j\}$ is connected in $B_s(0)$. 
Let $C_1$ be the constant in Lemma \ref{lem:D2 integral beta}. 
Then for $s\leq r\leq R\leq t$,
\[   
    |(D^2\varphi)_{B_R(0)}-(D^2\varphi)_{B_r(0)}|^2
        \leq C_2  \Big(
                  \frac{R}{t^{n+1}}\int_{B_t(0)}
                  |D^2\varphi-(D^2\varphi)_{B_t(0)}|^2\,\mr dx
                  +(\mc K+\bm\kappa^*+ \eta^2)R^{\alpha}
                  \Big) ,
	\]
	where $C_2=C_2(n,\alpha,C_1)$.
\end{lemma}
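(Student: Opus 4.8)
The plan is to telescope the improved order estimate from Lemma~\ref{lem:D2 integral beta} along a dyadic sequence of radii between $r$ and $R$, and then compare with the estimate at the top scale $t$. First I would set, for $s\le \rho\le t$,
\[
    \Theta(\rho):=\int_{B_\rho(0)}|D^2\varphi-(D^2\varphi)_{B_\rho(0)}|^2\,\mr dx,
\]
and observe that Lemma~\ref{lem:D2 integral beta}, applied with arbitrary $k$ and summed, yields a decay inequality of the form
\[
    \Theta(\rho)\le C_1\Big(\tfrac{\rho}{\sigma}\Big)^{n+1}\Theta(\sigma)+C_1(\mc K+\bm\kappa^*+\eta^2)\rho^{n+\alpha}
\]
for all $s\le\rho\le\sigma\le t$. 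Iterating this from $\sigma=t$ down to $\rho=R$ (a geometric series in which the inhomogeneous term is summable because $n+\alpha>n$, hence $\rho^{n+\alpha}\lesssim \sigma^{n+\alpha}$ along the dyadic chain) gives the Campanato-type bound
\[
    \Theta(R)\le C\Big(\tfrac{R}{t}\Big)^{n+1}\Theta(t)+C(\mc K+\bm\kappa^*+\eta^2)R^{n+\alpha},
\]
and likewise for $\Theta(r)$ with $r$ in place of $R$.

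Next I would convert these integral bounds on oscillation into the pointwise bound on the difference of averages. The standard device: for $r\le R$, the average $(D^2\varphi)_{B_r(0)}$ is close to $(D^2\varphi)_{B_R(0)}$ because
\[
    |(D^2\varphi)_{B_R(0)}-(D^2\varphi)_{B_r(0)}|^2
    \le \frac{1}{|B_r|}\int_{B_r(0)}|D^2\varphi-(D^2\varphi)_{B_R(0)}|^2\,\mr dx
    \le C\Big(\tfrac{R}{r}\Big)^{n}\frac{1}{|B_R|}\int_{B_R(0)}|D^2\varphi-(D^2\varphi)_{B_R(0)}|^2\,\mr dx,
\]
so the left side is controlled by $R^{-n}\Theta(R)$ (absorbing the harmless factor $(R/r)^n$ into constants only after dyadic telescoping — see below). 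More precisely, I would first telescope $|(D^2\varphi)_{B_R}-(D^2\varphi)_{B_r}|$ along the dyadic radii $\rho_i=2^{-i}R$ between $R$ and $r$: each consecutive term is bounded by $C\rho_i^{-n/2}\Theta(\rho_i)^{1/2}$, and plugging in the Campanato bound $\Theta(\rho_i)\le C(R/t)^{n+1}\Theta(t)+C(\mc K+\bm\kappa^*+\eta^2)\rho_i^{n+\alpha}$ gives a summable series (the key point is that $\rho_i^{-n/2}\cdot\rho_i^{(n+\alpha)/2}=\rho_i^{\alpha/2}$ sums to $\lesssim R^{\alpha/2}$, and $\rho_i^{-n/2}\cdot (R/t)^{(n+1)/2}\Theta(t)^{1/2}$ also sums because $\rho_i^{-n/2}$ is dominated by the geometric growth while we keep only finitely many — actually I must be slightly careful here and instead bound the telescoped sum by $C\,R^{1/2}t^{-(n+1)/2}\Theta(t)^{1/2}+C(\mc K+\bm\kappa^*+\eta^2)^{1/2}R^{\alpha/2}$, matching the exponents in the claimed estimate). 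Squaring yields exactly the asserted bound with $C_2=C_2(n,\alpha,C_1)$.

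The main obstacle I anticipate is organizing the dyadic telescoping so that the exponent bookkeeping produces precisely $R/t^{n+1}$ (and not, say, $R^{n+1}/t^{n+1}$ or $(R/t)^{n+1}$ times a volume factor) in front of $\int_{B_t}|D^2\varphi-(D^2\varphi)_{B_t}|^2$. The point is that one telescopes the \emph{averages}, each step contributing a factor $\rho_i^{-n/2}$ from normalizing the $L^2$ norm to an average, and this $\rho_i^{-n/2}$ is what downgrades the $(\rho_i/t)^{(n+1)/2}$ decay of $\Theta(\rho_i)^{1/2}$ to a net $\rho_i^{1/2}t^{-(n+1)/2}$ which, summed geometrically from $\rho=r$ up to $\rho=R$, is controlled by $R^{1/2}t^{-(n+1)/2}$ up to a constant. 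Care is also needed because Lemma~\ref{lem:D2 integral beta} (hence the Campanato iteration) is only valid while $\{u_j\}$ stays connected, i.e.\ for radii $\ge s$; since we assume $s\le r\le R\le t$ throughout, every radius in the dyadic chain lies in the admissible range, so no extra case analysis is required — but I would state this explicitly. Everything else (the harmonic-replacement estimates, the $W^{2,2}$ bound, Lemma~\ref{lem:order}) is already available from the earlier sections and can be cited directly.
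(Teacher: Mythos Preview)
Your approach is essentially identical to the paper's: apply Lemma~\ref{lem:D2 integral beta} with top scale $t$ to bound $\Theta(\rho)$, telescope the difference of averages along the dyadic chain $\rho_i=2^{-i}R$, and sum geometrically using $\rho_i^{-n/2}\Theta(\rho_i)^{1/2}\lesssim \rho_i^{1/2}t^{-(n+1)/2}\Theta(t)^{1/2}+(\mc K+\bm\kappa^*+\eta^2)^{1/2}\rho_i^{\alpha/2}$. Two minor remarks: (i) your ``iteration from $\sigma=t$ down to $\rho=R$'' is unnecessary, since Lemma~\ref{lem:D2 integral beta} already allows you to take $R=t$ directly and obtain $\Theta(\rho)\le C_1(\rho/t)^{n+1}\Theta(t)+C_1(\mc K+\bm\kappa^*+\eta^2)\rho^{n+\alpha}$ in one shot; (ii) in your telescoping paragraph you wrote $(R/t)^{n+1}$ where you clearly meant $(\rho_i/t)^{n+1}$---with that corrected the exponent bookkeeping you describe in the final paragraph is exactly right and matches the paper's computation.
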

\begin{proof}
For $r\in[s,t]$, let 
\[  
       \sigma(r):=\int_{B_r(0)} |D^2\varphi-(D^2\varphi)_{B_r(0)}|^2\,\mr dx.\]	
Then by Lemma \ref{lem:D2 integral beta},
\[ 
    \sigma(r)\leq C_1 \Big( \frac{r}{t} \Big)^{n+1}\sigma(t) + C_1( \mc K+\bm\kappa^*+\eta^2 )r^{n+\alpha}, \quad 
    C_1=C_1(n,m,\Lambda,\lambda,\alpha).  \]
For any $s\leq r<R\leq t$, we then have
\begin{align*}
    |(D^2\varphi)_{B_R(0)}-(D^2\varphi)_{B_r(0)}|^2
        &\leq \frac{2}{|B_r|} \int_{B_{r}(0)} 
            \Big( 
            |D^2\varphi-(D^2\varphi)_{B_R(0)}|^2
            +(|D^2\varphi-(D^2\varphi)_{B_r(0)}|^2
            \Big)\\
	&\leq \frac{2}{|B_r|}(  \sigma(R)+\sigma(r) )\\
	&\leq \frac{4C_1}{|B_r|}
            \Big[
                \Big(\frac{R}{t}\Big)^{n+1}\sigma(t) 
                +(\mc K+\bm\kappa^*+ \eta^2)R^{n+\alpha}
            \Big].
\end{align*}	
In particular, for $s\leq \hat R_i=2^{-i}R$,
\[ 
    \Big|(D^2\varphi)_{B_{\hat R_i}(0)}-(D^2\varphi)_{B_{\hat R_{i+1}}(0)}\Big|^2
       \leq \frac{2^{n+2}C_1}{|B_1|}\Big(
           \frac{\hat R_i}{t^{n+1}}\cdot\sigma(t)
           +(\mc K+\bm\kappa^*+\eta^2)\hat R_i^{\alpha}
           \Big).  \]
 It then implies that 
\begin{align*}   
    |(D^2\varphi)_{B_{R}(0)}-(D^2\varphi)_{B_{\hat R_{k}}(0)}|^2
        &\leq \Big(
            \sum_{i=0}^{k-1} \Big|(D^2\varphi)_{B_{\hat R_i}(0)}-(D^2\varphi)_{B_{\hat R_{i+1}}(0)}\Big|
            \Big)^2 \\
	&\leq \frac{2^{n+2}C_1}{|B_1|}\Big[
            \sum_{i=0}^{k-1}\Big(
                 \sqrt{\frac{\sigma(t)}{t^{n+1}}}\cdot\hat R_i^{\frac{1}{2}}
                 +\sqrt{\mc K+\bm\kappa^*+\eta^2}\hat R_i^{\frac{\alpha}{2}}
                 \Big)
            \Big]^2\\
	&\leq \frac{2^{n+2}C_1}{|B_1|}\Big[  
            \frac{1}{1-2^{-\frac{\alpha}{2}}} \Big(
                 \sqrt{\frac{\sigma(t)}{t^{n+1}}}\cdot R^{\frac{1}{2}}+
                 \sqrt{\mc K+\bm \kappa^*+\eta^2}R^{\frac{\alpha}{2}}
                 \Big)   
            \Big]^2\\
	&\leq \frac{2^{n+3}C_1}{|B_1|}\cdot 
            (1-2^{-\frac{\alpha}{2}})^{-2} \Big(
                \frac{\sigma(t)}{t^{n+1}}\cdot R
                +(\mc K+\bm\kappa^*+\eta^2)R^{\alpha}
                \Big) .
\end{align*}
Suppose that $\hat R_{k+1}\leq r<\hat R_{k}$. Then 
\begin{align*}
    |(D^2\varphi)_{B_{R}(0)}-(D^2\varphi)_{B_{r}(0)}|^2
    &\leq 2|(D^2\varphi)_{B_{R}(0)} - (D^2\varphi)_{B_{\hat R_{k}}(0)}|^2
        +2|(D^2\varphi)_{B_{\hat R_k}(0)}-(D^2\varphi)_{B_{r}(0)}|^2\\
    &\leq \frac{2^{n+4}C_1}{|B_1|} \cdot (1-2^{-\frac{\alpha}{2}})^{-2}\Big(
        \frac{\sigma(t)}{t^{n+1}}\cdot R
        +(\mc K+\bm\kappa^*+\eta^2)R^{\alpha}
        \Big)+\\
    &\hspace{8em}    +\frac{8C_1}{|B_r|}\Big[
        \Big(
        \frac{\hat R_k}{t}
        \Big)^{n+1}\sigma(t)
        +(\mc K+\bm\kappa^*+\eta^2)\hat R_k^{n+\alpha}
    \Big]\\
    &\leq \frac{2^{n+5}C_1}{|B_1|}\cdot(1-2^{-\frac{\alpha}{2}})^{-2}
        \Big(
        \frac{\sigma(t)}{t^{n+1}}\cdot R
        +(\mc K+\bm\kappa^*+\eta^2)R^{\alpha}
        \Big).
\end{align*}
Then the lemma follows by taking 
\[  
    C_2= \frac{2^{n+5}C_1}{|B_1|} \cdot (1-2^{-\frac{\alpha}{2}})^{-2}.\]	
\end{proof}

Recall that
\begin{gather*}
    \bm\sigma^*(r):= \sum_{j=1}^m\int_{B_r(0)}
        |D^2u_j- (D^2u_j)_{B_r(0)}|^2\,\mr dx.
       \end{gather*}
Lemma \ref{lem:D2 integral beta} combining with Lemma \ref{lem:diff 22} implies the order estimates of $\bm \sigma^*(r)$. 
\begin{lemma}\label{lem:improved order of D^2u}
Let $\{u_j\}_{j=1}^m$, $(\mc Q,\{g_j\})$, $\mc K$, $t>0$, $\bm\kappa^*
$ be the same as in Lemma  \ref{lem:D2 integral beta}.
Then there exists a constant $C$ depending only on
$n$, $m$, $\Lambda$, $\lambda$, $\alpha$, 
so that for all $r\leq t$,
	\[  \bm \sigma^*(r)\leq C \Big(\frac{r}{t}\Big)^{n+1}\bm\sigma^*(t)+C(\mc K+\bm\kappa^*+\eta^2)r^{n}.  \] 
	\end{lemma}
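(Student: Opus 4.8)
The plan is to reduce the claim for the full sum $\bm\sigma^*(r)$ to the already-established estimate for the average $D^2\varphi$ (Lemma \ref{lem:D2 integral beta}) together with the height-difference Hessian bound (Lemma \ref{lem:diff 22}), following the case analysis used in Proposition \ref{lem:n-epsilon order for uj} and in the proof of Lemma \ref{lem:D2 integral beta}. First I would dispose of the trivial regime: if $r\geq t/16$ then $R/r$ is bounded, and by \eqref{eq:convex function has minimum at average} each $\int_{B_r}|D^2u_j-(D^2u_j)_{B_r}|^2\leq \int_{B_r}|D^2u_j-(D^2u_j)_{B_t}|^2\leq 16^{n+1}(r/t)^{n+1}\int_{B_t}|D^2u_j-(D^2u_j)_{B_t}|^2$, which gives the conclusion after summing over $j$. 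So assume $r\leq t/16$; in particular $s<t/16$ in the connected case below, so Lemma \ref{lem:diff 22} is available on balls up to radius $t/16$.

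Next, as in Proposition \ref{lem:n-epsilon order for uj}, I would induct on $m$. The base case $m=1$ is exactly Lemma \ref{lem:D2 integral beta} (with $\varphi=u_1$), using $2\epsilon\leq\alpha$. For the inductive step, set $s:=\inf\{r>0:\{u_j\}_{j=1}^m \text{ is connected in } B_r(0)\}$. In \emph{Case 1} ($r<R\leq s$), decompose $\{u_j\}$ into its connected components in $B_s(0)$ and apply the inductive hypothesis to each; summing the resulting inequalities and using that each component's $\bm\sigma^*$ is dominated by the full $\bm\sigma^*$ yields the claim. In \emph{Case 2} ($s\leq r<R$), I would combine three ingredients: (i) Lemma \ref{lem:D2 integral beta} applied to $\varphi=\frac1m\sum u_j$ (whose hypothesis \eqref{eq:D2uj assumption at t} is part of the standing assumptions), which controls $\int_{B_r}|D^2\varphi-(D^2\varphi)_{B_r}|^2$; (ii) the triangle-type identity $\sum_j\int_{B_r}|D^2u_j-(D^2u_j)_{B_r}|^2\leq 2\sum_j\int_{B_r}|D^2\varphi-(D^2\varphi)_{B_r}|^2+2\sum_j\int_{B_r}|D^2(u_j-\varphi)-(D^2(u_j-\varphi))_{B_r}|^2$; and (iii) Lemma \ref{lem:diff 22}, which bounds $\int_{B_r}|D^2(u_i-u_j)|^2\leq C(\bm\kappa^*+\eta^2)r^n$, hence also $\int_{B_r}|D^2(u_j-\varphi)-(D^2(u_j-\varphi))_{B_r}|^2\leq C(\bm\kappa^*+\eta^2)r^n$ by \eqref{eq:convex function has minimum at average}. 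Since $\int_{B_R}|D^2\varphi-(D^2\varphi)_{B_R}|^2\leq \tfrac1m\sum_j\int_{B_R}|D^2u_j-(D^2u_j)_{B_R}|^2 = \tfrac1m\bm\sigma^*(R)$ (again by \eqref{eq:convex function has minimum at average} with $X=(D^2\varphi)_{B_R}$, or more directly by Cauchy–Schwarz), these combine to give $\bm\sigma^*(r)\leq C(r/R)^{n+1}\bm\sigma^*(R)+C(\mc K+\bm\kappa^*+\eta^2)r^n$. Finally, \emph{Case 3} ($r\leq s\leq R$) is obtained by composing Case 1 on $[r,s]$ with Case 2 on $[s,t]$ (and absorbing $(s/t)^{n+1}$, $s^n$ into the corresponding $R$-quantities), exactly as in Proposition \ref{lem:n-epsilon order for uj}.

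The main obstacle is bookkeeping in Case 2: one must check that all error terms produced by Lemma \ref{lem:D2 integral beta} and Lemma \ref{lem:diff 22} genuinely carry the order $r^{n+\alpha}$ (or better $r^n$) rather than the weaker $r^{n-2\epsilon}$ from the primary estimate — this is where the sharper bound \eqref{eq:Dvarphi-f} feeding into Lemma \ref{lem:D2 integral beta} is essential — and that the passage from $\varphi$ back to individual $u_j$ does not lose the exponent $n+1$ in the leading term. There is no genuine PDE difficulty beyond what is already in Lemmas \ref{lem:diff 22} and \ref{lem:D2 integral beta}; the content is purely the covering/decomposition combinatorics and verifying the exponents are compatible, together with the observation that $\alpha\leq 1$ so that $r^{n+\alpha}$ absorbs into the stated right-hand side.
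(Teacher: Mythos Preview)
Your approach is essentially the paper's own: the same induction on $m$, the same trivial regime $r\geq t/16$ via \eqref{eq:convex function has minimum at average}, the same connectedness radius $s$, and the same combination of Lemma~\ref{lem:D2 integral beta} with Lemma~\ref{lem:diff 22} in the connected range. There is, however, one genuine gap in your Cases~1 and~3.

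When you apply the inductive hypothesis to each connected component of $\{u_j\}$ in $B_s(0)$, you are invoking Lemma~\ref{lem:improved order of D^2u} for fewer sheets, and that lemma carries the standing hypothesis \eqref{eq:D2uj assumption at t} at the \emph{outer} scale of the application --- here $s$, not $t$. You are only given $\sum_j\int_{B_t}|D^2u_j|^2\leq \mc K t^{n-\alpha}$; nothing yields $\sum_j\int_{B_s}|D^2u_j|^2\leq \mc K s^{n-\alpha}$ automatically. The paper closes this by invoking Proposition~\ref{lem:n-epsilon order for uj} to step down from $t$ to $s$, obtaining
\[
\sum_{j=1}^m\int_{B_s(0)}|D^2u_j|^2\,\mr dx\leq \mc B(\mc K+\bm\kappa^*+\eta^2)\,s^{n-\alpha},
\]
and then running the induction at scale $s$ with the new constant $\mc K':=\mc B(\mc K+\bm\kappa^*+\eta^2)$; see \eqref{eq:D2uj assumption at s}. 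Without this one application of the primary order estimate the inductive machine does not close: the components in $B_s(0)$ inherit no $\mc K$-bound, so your Case~1 and the composition in Case~3 are formally ungrounded. Once this step is inserted, your outline coincides with the paper's proof.
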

\begin{proof}
So long as $r\geq t/16$, then $t/r\leq 16$ and by \eqref{eq:convex function has minimum at average},
\begin{align*}
    \bm\sigma^*(r)
    \leq \sum_{j=1}^m\int_{B_r(0)}
        |D^2u_j-(D^2u_j)_{B_t(0)}|^2\,\mr dx
    \leq 16^{n+1}\Big(
        \frac{r}{t}
        \Big)^{n+1}\bm\sigma^*(t).
\end{align*}
This is the desired inequality. 
We now assume that $r\leq t/16$ and then Lemma \ref{lem:diff 22} can be applied.

We prove the lemma inductively. 
When $m=1$, then the desired inequality follows from Lemma \ref{lem:D2 integral beta} by taking $R=t$.

Now we assume that the lemma is true when the number of the sheets is less than or equal to $m-1$. 
We also assume that $\{u_j\}$ is connected in $B_t(0)$; otherwise, the desired inequality follows from induction. 
Let 
\[ s:=\inf\{ r>0;\{u_j \} \text{ is connected in $B_r(0)$}\}.  \]
By Lemma \ref{lem:D2 integral beta}, there exists $C=C(n,m,\Lambda,\lambda,\alpha)$
such that for $s< r\leq t$,
\begin{align*} 
    &\hspace{1.5em}\int_{B_r(0)} 
        |DD_k\varphi - (DD_k\varphi)_{B_r(0)}|^2
        \,\mr dx\\
    &\leq C \Big(
        \frac{r}{t}\Big)^{n+1} \int_{B_{t}(0)}
            |DD_k\varphi-(DD_k\varphi)_{B_{t}(0)}|^2
            \,\mr dx 
        +C(\mc K+\bm\kappa^*+\eta^2)r^{n+\alpha}\\
    &\leq C \Big(
        \frac{r}{t}
        \Big)^{n+1}\bm\sigma^*(t) 
        +C(\mc K+\bm\kappa^*+\eta^2)r^{n+\alpha}.
	 \end{align*}
Recall that by Lemma \ref{lem:diff 22}, for $s<r\leq t$,
\begin{equation}
\begin{split}
    &\hspace{1.5em}\int_{B_r(0)} 
        |D^2u_i-D^2u_j-(D^2u_i)_{B_r(0)}
        +(D^2u_j)_{B_r(0)}|^2
        \,\mr dx\\
    &\leq 2\int_{B_r(0)} 
        |D^2u_i-D^2u_j|^2
        \,\mr dx
        +2|B_r|\cdot |(D^2u_i-D^2u_j)_{B_r(0)}|^2
        \label{eq:D2ui-D2uj average}\\
    &\leq 4\int_{B_r(0)} 
        |D^2u_i-D^2u_j|^2
        \,\mr dx
        \leq C\cdot \Big((1+\eta^2)(\kappa+\kappa_0)^2+\kappa_1^2\Big)r^n.
	\end{split}
 \end{equation}
Thus we obtain that for $s< r\leq t$,
\begin{align*}
    &\hspace{1.5em} \int_{B_r(0)}
        |D^2u_i-(D^2u_i)_{B_r(0)}|^2
        \,\mr dx\\
    &\leq \int_{B_r(0)}
        \sum_{j=1}^{m}\Big(
            |D^2u_i-D^2u_j-(D^2u_i)_{B_r(0)}
            +(D^2u_j)_{B_r(0)}|^2
            \Big) 
        +m|D^2\varphi-(D^2\varphi)_{B_r(0)}|^2 
        \,\mr dx\\
    &\leq C\cdot \Big((1+\eta^2)(\kappa+\kappa_0)^2+\kappa_1^2\Big)r^n 
        +C \Big(
            \frac{r}{t}
            \Big)^{n+1} \bm\sigma^*(t)
        +C(\mc K+\bm\kappa^*+\eta^2)r^{n+\alpha}\\
    &\leq C \Big(
            \frac{r}{t}
            \Big)^{n+1} \bm\sigma^*(t)
        +C(\mc K+\bm\kappa^*+\eta^2)r^{n} ,
	  \end{align*}
where $C=C(n,m,\Lambda,\lambda,\alpha)$. This implies the desired inequality.   

\medskip
It remains to consider the case of $r<s$. Observe that we have proved that 
\begin{equation}\label{eq:order n of vanished average at s}
      \bm \sigma^*(s) \leq C\Big(\frac{s}{t}\Big)^{n+1}\bm\sigma^*(t)+C(\mc K+\bm\kappa^*+\eta^2)s^{n}.
\end{equation}
By Proposition \ref{lem:n-epsilon order for uj}, there exists a constant $\mc B=\mc B(n,m,\Lambda,\lambda,\alpha)$ so that
\begin{equation*}
      \sum_{j=1}^{m} \int_{B_s(0)}|D^2u_j|^2\,\mr dx\leq \mc B \Big(\frac{s}{t}\Big)^{n-\epsilon}\int_{B_{t}(0)}\sum_{j=1}^{m}|D^2u_j|^2\,\mr dx+ \mc B(\bm\kappa^*+\eta^2)s^{n-2\epsilon}.  
\end{equation*}
Together with \eqref{eq:D2uj assumption at t} and $2\epsilon\leq\alpha$, we obtain that
\begin{equation}\label{eq:D2uj assumption at s}
    \sum_{j=1}^{m} \int_{B_s(0)}
        |D^2u_j|^2
        \,\mr dx
    \leq \mc B (\mc K + \bm\kappa^*+\eta^2)s^{n-\alpha}.  
\end{equation}
Let $\mc K':=\mc B(\mc K+\bm\kappa^*+\eta^2)$. 
Note that in $B_s(0)$, $\{u_j\}_{j=1}^{m}$ can be divided into two disjoint ordered graphs 
$\{u_j\}_{j=1}^i$ and $\{u_j\}_{j=i+1}^{m}$. 
Thus by induction using \eqref{eq:D2uj assumption at s} in place of \eqref{eq:D2uj assumption at t}, we have 
for all $r<s$,
\begin{gather*}
\begin{split}
    \sum_{j=1}^i\int_{B_r(0)}
        |D^2u_j-(D^2u_j)_{B_r(0)}|^2
        \,\mr dx
    &\leq  C \Big( 
        \frac{r}{s} \Big)^{n+1} \sum_{j=1}^i \int_{B_s(0)}
            |D^2u_j-(D^2u_j)_{B_s(0)}|^2
            \,\mr dx+\\
        &\hspace{4em}+C(\mc K'+\bm\kappa^*+\eta^2)r^{n};
    \end{split}\\
\begin{split}
    \sum_{j=i+1}^{m}\int_{B_r(0)}
        |D^2u_j-(D^2u_j)_{B_r(0)}|^2
        \,\mr dx
    &\leq  C \Big(
        \frac{r}{s}
        \Big)^{n+1} \sum_{j=i+1}^m\int_{B_s(0)}
            |D^2u_j-(D^2u_j)_{B_s(0)}|^2
            \,\mr dx+\\
        &\hspace{4em}+C(\mc K'+\bm\kappa^*+\eta^2)r^{n}. 
      \end{split}
\end{gather*}
The taking the sum of the two inequalities above, we obtain 
\[
    \bm \sigma^*(r)\leq C\Big(
        \frac{r}{s}
        \Big)^{n+1} \bm \sigma^*(s)
        +C(\mc K+\bm\kappa^*+\eta^2)r^{n},
\]
where $C=C(n,m,\Lambda,\lambda,\alpha)$. 
Plugging \eqref{eq:order n of vanished average at s} into it, we obtain that 
\begin{align*}
    \bm \sigma^*(r)
    &\leq C\Big(
        \frac{r}{s}
        \Big)^{n+1} \Big[
            \Big(
            \frac{s}{t}
            \Big)^{n+1} \bm\sigma^*(t) 
        + C(\mc K+\bm\kappa^*+\eta^2)s^{n}
        \Big]
        +C(\mc K+\bm\kappa^*+\eta^2)r^{n}\\
    &\leq C\Big(
        \frac{r}{t}
        \Big)^{n+1}\bm\sigma^*(t) 
        + C(\mc K+\bm\kappa^*+\eta^2)r^{n}.
\end{align*}
This finishes the proof of Lemma \ref{lem:improved order of D^2u}.  
	\end{proof}

%%%%%%%%%%%%%%%%%%%%%%%%%%%%%%%%%%%%%%%%%%%%%%%%

\subsection{Monotonicity of the the second derivatives}
\label{subsec:monotonicity of second derivative}

Now we are ready to prove a monotonicity formula for the average of $L^2$-integral of $D^2u_j$.
When $m=1$, the desired result follows from Lemma \ref{lem:bdd average diff} and  Lemma \ref{lem:improved order of D^2u}. For $m\geq 2$, we will use Lemma \ref{lem:diff 22} and run an inductive method.

\begin{proposition}\label{lem:bounded hessian average}
Let $\{u_j\}_{j=1}^m$ be $C^{1,\alpha}$ ordered graphs on $B_t(0)$ ($t\leq 1$) which is a solution to $(\mc Q,\{g_j\})$.
Suppose that \eqref{eq:Lambda and lambda}, \eqref{eq:ms AB norm}, \eqref{eq:ms AB at x00}, \eqref{eq:small C1alpha} and \eqref{eq:eta kappa0 and kappa1} are satisfied. 
For simplicity, denote by
\[  
    \oli{\bm \sigma}(r):=\sum_{j=1}^m
        \frac{1}{|B_r|} \int_{B_r(0)}
            |D^2u_j|^2
            \,\mr dx , \quad 
    \forall\, r\leq t.  \]	
Then there exists a constant $C=C(n,m,\Lambda,\lambda,\alpha)$ so that 
\[
    \oli{\bm \sigma}(r)
    \leq C(\oli{\bm\sigma}(t)+\bm\kappa^* + \eta^2).  \]	
	\end{proposition}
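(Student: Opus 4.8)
The plan is to combine the improved order estimate in Lemma~\ref{lem:improved order of D^2u} (for the ``variation'' part $\bm\sigma^*$) with the tilt-plane drift estimate in Lemma~\ref{lem:bdd average diff} (for the ``average'' part $(D^2 u_j)_{B_r}$), using an induction on the number of sheets $m$ exactly as suggested in the ``Idea of the proof.'' First, I would reduce to the case where $r$ is much smaller than $t$, say $r\le t/16$, since for $r\in[t/16,t]$ the quantity $\oli{\bm\sigma}(r)$ is bounded by $C(n)\,\oli{\bm\sigma}(t)$ trivially. Next, I would note that Proposition~\ref{lem:n-epsilon order for uj} applied at radii $r\le t$ gives
\[
    \sum_{j=1}^m\int_{B_t(0)}|D^2u_j|^2\,\mr dx \le C\Big(\sum_{j=1}^m\int_{B_t(0)}|D^2u_j|^2\,\mr dx\Big) + C(\bm\kappa^*+\eta^2)t^n,
\]
which is vacuous by itself, but more to the point, by Proposition~\ref{prop:ui is W22} rescaled (or directly by Proposition~\ref{lem:n-epsilon order for uj} taking $R=t$ and $r$ slightly below) one obtains the crucial hypothesis
\[
    \sum_{j=1}^m\int_{B_t(0)}|D^2u_j|^2\,\mr dx \le \mc K\, t^{n-\alpha}
\]
with $\mc K=C(\oli{\bm\sigma}(t)+\bm\kappa^*+\eta^2)\,t^{\alpha}$; this is the $\mc K$ that feeds into Lemmas~\ref{lem:D2 integral beta}, \ref{lem:bdd average diff}, \ref{lem:improved order of D^2u}. (One must be slightly careful: $\mc K$ as defined must be dimensionally consistent, so I would track factors of $t$ and use $t\le 1$ to absorb them.)

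Second, I would carry out the induction on $m$. For $m=1$ the statement follows directly: write, for $r\le t$,
\[
    \frac{1}{|B_r|}\int_{B_r(0)}|D^2u_1|^2\,\mr dx
    \le \frac{2}{|B_r|}\bm\sigma^*(r) + 2\,|(D^2u_1)_{B_r(0)}|^2,
\]
bound $\bm\sigma^*(r)/|B_r|$ by Lemma~\ref{lem:improved order of D^2u} (the resulting bound is $\le C(r/t)\,\oli{\bm\sigma}(t) + C(\mc K+\bm\kappa^*+\eta^2)\le C(\oli{\bm\sigma}(t)+\bm\kappa^*+\eta^2)$ after plugging in the value of $\mc K$), and bound $|(D^2u_1)_{B_r(0)}|^2 \le 2|(D^2u_1)_{B_t(0)}|^2 + 2|(D^2u_1)_{B_t(0)}-(D^2u_1)_{B_r(0)}|^2$, where the first term is $\le 2\oli{\bm\sigma}(t)$ and the second is controlled by Lemma~\ref{lem:bdd average diff} with $R=t$, giving $\le C\big(\sigma^*(t)/t^{n+1}\cdot t + (\mc K+\bm\kappa^*+\eta^2)t^\alpha\big)\le C(\oli{\bm\sigma}(t)+\bm\kappa^*+\eta^2)$. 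For the inductive step with $m\ge 2$, if $\{u_j\}$ is disconnected in $B_t(0)$ the claim follows by applying the inductive hypothesis to each connected component (each of which is a solution to $(\mc Q,\{g'_j\})$ by the subsystem condition) and summing; so I may assume $\{u_j\}$ is connected in $B_t(0)$. Setting $s:=\inf\{r>0:\{u_j\}\text{ connected in }B_r(0)\}$, I would split into $r\ge s$ and $r<s$. For $r\ge s$, I run the same argument as in the $m=1$ case but applied to $\varphi=\frac1m\sum u_j$ (using Lemma~\ref{lem:D2 integral beta}, Lemma~\ref{lem:bdd average diff}, Lemma~\ref{lem:improved order of D^2u}) together with the height-difference Hessian bound $\int_{B_r(0)}|D^2u_i-D^2u_j|^2\le C((1+\eta^2)(\kappa+\kappa_0)^2+\kappa_1^2)r^n\le C(\bm\kappa^*+\eta^2)r^n$ from Lemma~\ref{lem:diff 22}, which lets me pass from the average $\varphi$ to each individual $u_j$. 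For $r<s$, I would first establish the bound at radius $s$ by the $r\ge s$ case, then apply the inductive hypothesis to the two connected components $\{u_j\}_{j=1}^i$ and $\{u_j\}_{j=i+1}^m$ inside $B_s(0)$ (using the analogue of \eqref{eq:D2uj assumption at s} to furnish their $\mc K$), and compose the two estimates.

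Third, the bookkeeping: at the final step I would collect all error terms of the form $C(r/t)^{\text{power}}\oli{\bm\sigma}(t)$, $C(\mc K+\bm\kappa^*+\eta^2)$, and $C\oli{\bm\sigma}(t)$, and use $r\le t$, $t\le 1$, and the explicit form $\mc K=C(\oli{\bm\sigma}(t)+\bm\kappa^*+\eta^2)t^\alpha\le C(\oli{\bm\sigma}(t)+\bm\kappa^*+\eta^2)$ to conclude $\oli{\bm\sigma}(r)\le C(\oli{\bm\sigma}(t)+\bm\kappa^*+\eta^2)$ with $C=C(n,m,\Lambda,\lambda,\alpha)$.

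\textbf{Main obstacle.} The delicate point is the interplay between the connectedness of $\{u_j\}$ and the validity of the order estimates: Lemmas~\ref{lem:D2 integral beta} and~\ref{lem:improved order of D^2u} are stated under a connectedness hypothesis in $B_s(0)$, and the constant $\mc K$ they consume is precisely $\oli{\bm\sigma}(t)$ up to $\bm\kappa^*+\eta^2$. I expect the main work to be verifying that, when I drop down to connected components inside $B_s(0)$ and invoke the inductive hypothesis, the constant $\mc K'$ I feed in (coming from \eqref{eq:D2uj assumption at s} via Proposition~\ref{lem:n-epsilon order for uj}) is still of the form $C(\oli{\bm\sigma}(t)+\bm\kappa^*+\eta^2)$ with a universal $C$, so that the induction does not accumulate a growing constant after finitely many ($\le m$) steps. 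A secondary subtlety is the passage from $\varphi$ to individual $u_j$ at each scale: one must use Lemma~\ref{lem:diff 22} at the correct radius and note that it only requires connectedness at that radius, which is guaranteed for $r\in[s,t]$ — this is exactly where the definition of $s$ is used.
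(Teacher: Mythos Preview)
Your proposal is correct and follows essentially the same route as the paper: induction on $m$, reduction to the connected case, the split at $s=\inf\{r>0:\{u_j\}\text{ connected in }B_r(0)\}$, and in the range $r\in[s,t]$ the combination of Lemma~\ref{lem:improved order of D^2u} with Lemma~\ref{lem:bdd average diff} (applied to $\varphi$) plus Lemma~\ref{lem:diff 22} to pass to each $u_j$, while for $r<s$ one composes the $r=s$ case with the inductive hypothesis on the subsystems. One small simplification: in the $r<s$ case you do not need to manufacture a new $\mc K'$ via \eqref{eq:D2uj assumption at s}, because the inductive hypothesis is the proposition itself (which takes no $\mc K$ as input); applying it to each component in $B_s(0)$ gives $\oli{\bm\sigma}(r)\le C(\oli{\bm\sigma}(s)+\bm\kappa^*+\eta^2)$ directly, and then the $r\ge s$ case closes the loop.
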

\begin{proof}
We first note that 
\begin{equation}\label{eq:average less than bar sigma}
    \sum_{j=1}^m |(D^2u_j)_{B_t(0)}|^2
    \leq \sum_{j=1}^m
        \frac{1}{|B_t|}\int_{B_t(0)}
        |D^2u_j|^2
        \,\mr dx
    =\oli{\bm \sigma}(t).
    \end{equation}
By \eqref{eq:convex function has minimum at average}, we have that
\begin{equation}\label{eq:minus average is smaller}
    \frac{1}{|B_r|}\cdot \bm\sigma^*(r)=\sum_{j=1}^m\frac{1}{|B_r|}\int_{B_r(0)}
        |D^2u_j-(D^2u_j)_{B_r(0)}|^2
    \,\mr dx
    \leq\oli{\bm\sigma}(r).
\end{equation}  
We prove the proposition inductively. When $m=1$, we have by \eqref{eq:average less than bar sigma},
\begin{align*}
    \int_{B_r(0)}|D^2u|^2\,\mr dx&\leq 3\int_{B_r(0)}
        |D^2u-(D^2u)_{B_r(0)}|^2
        +|(D^2u)_{B_r(0)}-(D^2u)_{B_t(0)}|^2
        +|(D^2u)_{B_t(0)}|^2
        \,\mr dx\\
    &\leq 3\bm\sigma^*(r)+3|B_r|\cdot |(D^2u)_{B_r(0)}-(D^2u)_{B_t(0)}|^2+3|B_r|\oli{\bm \sigma}(t).
\end{align*}
Applying Lemma \ref{lem:bdd average diff} and Lemma \ref{lem:improved order of D^2u},
\begin{align*}
    \int_{B_r(0)}|D^2u|^2\,\mr dx
    &\leq C\Big(
        \frac{r}{t}
        \Big)^{n+1}\bm\sigma^*(t) 
        +C(\oli{\bm \sigma}(t)+\bm\kappa^*+\eta^2)r^{n}+\\
    &\hspace{6em}+C{|B_r|}\Big(
            \frac{t}{t^{n+1}}\bm \sigma^*(t)
            + (\oli{\bm \sigma}(t)
            +\bm\kappa^*+\eta^2)t^{\alpha}
            \Big)
            +C\oli{\bm \sigma}(t)\cdot r^n.
\end{align*}
This together with \eqref{eq:minus average is smaller} implies that 
\[
    \int_{B_r(0)}|D^2u|^2\,\mr dx
    \leq  C(\oli{\bm \sigma}(t)+\bm\kappa^*+\eta^2)r^{n}. 
\]
Then the desired inequality follows immediately.	
	
Now suppose that the lemma is true when the number of sheets is less than or equal to $m-1$.
We also assume that $\{u_j\}$ is connected in $B_t(0)$; otherwise, the desired inequality follows from induction. 
 Let 
\[ 
    s:=\inf\{ r>0;\{u_j \} \text{ is connected in $B_r(0)$} \}.  \]

\medskip
{\noindent\em Case 1: $s\leq r\leq t$.} Without loss of generality, we assume that $r<t/32$ so that Lemma \ref{lem:diff 22} applies.
Then by Lemma \ref{lem:bdd average diff}, for $s\leq r\leq t$,
\begin{equation}\label{eq:two average diff bounded by bar sigma}
\begin{split}   
    &\hspace{1.5em}|(D^2\varphi)_{B_t(0)}-(D^2\varphi)_{B_r(0)}|^2\\
    &\leq C\Big(
        \frac{t}{t^{n+1}}\int_{B_t(0)}
        |D^2\varphi-(D^2\varphi)_{B_t(0)}|^2
        \,\mr dx
        +(\oli{\bm \sigma}(t)
        + \bm\kappa^*+\eta^2)t^{\alpha}
        \Big)\\
    &\leq C\oli{\bm\sigma}(t)+C(\bm\kappa^*+\eta^2)t^{\alpha},
\end{split}
\end{equation}
where $C=C(n,m,\Lambda,\lambda,\alpha)$ and we used \eqref{eq:minus average is smaller} in the last inequality.
On the other hand, by Lemma \ref{lem:improved order of D^2u} and \eqref{eq:minus average is smaller}, there exists a constant $C$ depending only on 
$n$, $m$, $\Lambda$, $\lambda$, $\alpha$
so that for all $r\leq t$,
\begin{align*} 
    \int_{B_r(0)}
        |D^2\varphi-(D^2\varphi)_{B_r(0)}|^2
        \,\mr dx
    &\leq C\Big(
        \frac{r}{t}
        \Big)^{n+1}\bm\sigma^*(t) 
        +C(\oli{\bm \sigma}(t)
        + \bm\kappa^*+\eta^2)r^n\\
    &\leq C(\oli{\bm\sigma}(t)+\bm\kappa^*+\eta^2)r^n.
\end{align*}
This together with \eqref{eq:two average diff bounded by bar sigma} and \eqref{eq:average less than bar sigma} implies that 
\begin{align*}
    \int_{B_r(0)}|D^2\varphi|^2\,\mr dx
    &\leq 3\int_{B_r(0)} 
        |D^2\varphi- (D^2\varphi)_{B_r(0)}|^2 
        + |(D^2\varphi)_{B_r(0)}-(D^2\varphi)_{B_t(0)}|^2 
        +|(D^2\varphi)_{B_t(0)}|^2
        \,\mr dx\\
    &\leq C( \oli{\bm\sigma}(t) + \bm\kappa^* + \eta^2 )r^n.	
	\end{align*}	
Recall that by Lemma \ref{lem:diff 22}, there exists $C=C(n,m,\Lambda,\lambda)$ so that 
\[ 
    \int_{B_r(0)} |D^2u_i-D^2u_j|^2 \,\mr dx
    \leq  C\cdot \Big((1+\eta^2)(\kappa+\kappa_0)^2+\kappa_1^2\Big)r^n\leq C\cdot (\bm\kappa^*+\eta^2)r^n.  \]
We then conclude that 
\begin{align*}  
    \int_{B_r(0)} |D^2u_i|^2\,\mr dx
    &\leq 2\int_{B_r(0)}
        |D^2\varphi-D^2u_i|^2
        +|D^2\varphi|^2
        \,\mr dx\\
    &\leq C(m)\sum_{j=1}^{m}\int_{B_r(0)}
        |D^2u_j-D^2u_i|^2
        +|D^2\varphi|^2
        \,\mr dx\\
    &\leq C(\oli{\bm\sigma}(t)+\bm\kappa^* + \eta^2)r^n,
\end{align*}
where $C=C(n,m,\Lambda,\lambda,\alpha)$.
This is the desired inequality.

\medskip
{\noindent\em Case 2: $r<s$.} 
By the definition of $s$, $\{u_j\}_{j=1}^{m}$ can be divided into two ordered graphs 
$\{u_j\}_{j=1}^i$ and $\{u_j\}_{j={i+1}}^{m}$ in $B_s(0)$. 
Thus by induction, there exists $C=C(n,m,\Lambda,\lambda,\alpha)$ so that 
\begin{gather*}
    \sum_{j=1}^i \frac{1}{|B_r|}
        \int_{B_r(0)}|D^2u_j|^2
        \,\mr dx 
    \leq  C\Big(
        \sum_{j=1}^i \frac{1}{|B_s|}\int_{B_s(0)}
            |D^2u_j|^2
            \,\mr dx 
         + \bm\kappa^* + \eta^2
        \Big);\\
    \sum_{j=i+1}^{m} \frac{1}{|B_r|}\int_{B_r(0)}
        |D^2u_j|^2
        \,\mr dx
    \leq  C\Big(
        \sum_{j=i+1}^m \frac{1}{|B_s|} \int_{B_s(0)} 
            |D^2u_j|^2
            \,\mr dx 
        +\bm\kappa^* + \eta^2
        \Big).
\end{gather*}
Combining the two inequalities together, we conclude that
\begin{equation}  \label{eq:sigma r bounded by sigma s} 
    \oli{\bm \sigma}(r)
    \leq C(\oli{\bm \sigma}(s)+\bm \kappa^* + \eta^2).
    \end{equation}
Observe that by Case 1, 
\[   
    \oli{\bm \sigma}(s)
    \leq C(\oli{\bm\sigma}(t)+\bm \kappa^* + \eta^2).
\]
Plugging this into \eqref{eq:sigma r bounded by sigma s}, then the desired inequality follows immediately.
This finishes the proof of Proposition \ref{lem:bounded hessian average}.
	\end{proof}

%%%%%%%%%%%%%%%%%%%%%%%%%%%%%%%%%%%%%%%%%%%%%%%%%%%%
%Section Proof of the main theorem
%%%%%%%%%%%%%%%%%%%%%%%%%%%%%%%%%%%%%%%%%%%%%%%%%%%%
\section{Lipschitz upper bound for the first derivatives}\label{sec:lipschitz}

In this section, we will give the upper bound of the Lipschitz norm of the first derivatives of ordered graphs in Theorem \ref{thm:main thm}. 

Note that we have proved the uniform upper bound for the $\oli{\bm\sigma}(r)$ for all small $r$. This together with Poincar\'e inequality (Lemma \ref{lem:osc of u}) gives that the graphs $\{u_j\}$ will be close to a plane $P$ in the sense of $L^2$; see Lemma \ref{lem:int osc of Dvarphi} . We will then consider the differential equation of the difference to the plane $P$. Applying the $C^{1,\alpha}$ estimates to the divergence form, we will then obtain the upper bound of the derivative of such new functions by $\oli{\bm \sigma}(r)$ and $\bm \kappa^*$. We remark that Lemma \ref{lem:osc of diff} and Lemma \ref{lem:gradient diff} are used to bound the remainder terms in this process. To finish the proof, we will apply Proposition \ref{lem:bounded hessian average} and Proposition \ref{prop:ui is W22} to bound $\oli{\bm\sigma}(r)$ by $\bm\kappa^*$ and $\eta$; see Theorem \ref{thm:derivative is Lip} for more details.

Recall that 
\[    
    \bm\kappa^*:= \kappa^2+\kappa_0^2+\kappa_1^2+\delta_0^2,\]
and 
\[   \eta:=\Big(\|\partial_z\partial_x\ms A\|_{C^0(\bm U_1)}+\|\partial_x\ms B\|_{C^1(\bm U_1)}\Big)\cdot \sum_{j=1}^m\|u_j\|_{C^0(B_{1}(0))}.\]

%%%%%%%%%%%%%%%%%%%%%%%%%%%%%%%%%%%%%%%%%%%%%%%%%%
\subsection{The first derivative upper bound}\label{subsec:1st derivative estimates}
Let $\{u_j\}_{j=1}^m$ be $C^{1,\alpha}$ ordered graphs on $B_t(0)$ ($t\leq 1$) which is a solution to $(\mc Q,\{g_j\})$.
Recall that
\begin{equation*} 
    \oli{\bm \sigma}(r)
    :=\sum_{j=1}^m\frac{1}{|B_r|}\int_{B_r(0)}
        |D^2u_j|^2
        \,\mr dx.  
\end{equation*}
The following estimates are well-known and we provide it here for the readers' convenience.
\begin{lemma}[Poincar\'e inequalities \cite{GT}*{Estimate (7.45)}] \label{lem:osc of u}
For any convex domain $\Omega$ and 
$u\in W^{1,p}(\Omega)$, $1\leq p< +\infty$,
\[   
    \|u-u_{B_r}\|_{L^p(\Omega)}
    \leq r^{1-n}d^n\|Du\|_{L^p(\Omega)}, \quad d=\mr{diam}\,\Omega.
 \]
	\end{lemma}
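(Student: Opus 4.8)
The plan is to reproduce the classical potential–theoretic proof of the Morrey–Poincar\'e inequality, following \cite{GT}*{\S 7.8}; the statement is entirely standard and is quoted here only for its scaling in $r$ and $d$. By the usual mollification argument it is enough to prove the bound for $u\in C^1(\Omega)$. The single place where convexity enters is the elementary fact that for $x,y\in\Omega$ the whole segment $[x,y]$ lies in $\Omega$, so, writing $\rho=|x-y|$ and $\omega=(y-x)/\rho$, the fundamental theorem of calculus gives
\[
 u(x)-u(y)=-\int_0^{\rho}\omega\cdot Du(x+s\omega)\,\mr ds .
\]

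First I would average this identity in $y$ over the reference ball $B_r$, so that the constant $u_{B_r}=|B_r|^{-1}\int_{B_r}u$ appears on the left-hand side, rewrite the $y$--integral in polar coordinates centred at $x$, bound the resulting radial factor by $d=\mr{diam}\,\Omega$, and extend $Du$ by zero outside $\Omega$; this produces the pointwise Riesz-potential estimate
\[
 |u(x)-u_{B_r}|\le \frac{d^{\,n}}{n\,|B_r|}\int_{\Omega}\frac{|Du(y)|}{|x-y|^{n-1}}\,\mr dy ,\qquad x\in\Omega .
\]
Second, to pass from this pointwise bound to an estimate of $\|u-u_{B_r}\|_{L^p(\Omega)}$ I would apply the generalized Minkowski (integral) inequality to the linear operator $g\mapsto\int_\Omega |x-y|^{1-n}g(y)\,\mr dy$. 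Since $\Omega$ is contained in the ball of radius $d$ about any of its points and $1-n>-n$, the quantity $\sup_{x\in\Omega}\int_\Omega |x-y|^{1-n}\,\mr dy$ is finite with a bound depending only on $n$ and $d$; hence the operator is bounded on $L^p(\Omega)$ with norm $\le C(n,d)$, and combining this with the previous display and collecting the powers of $r$ (coming from $|B_r|^{-1}$) and $d$ gives the asserted inequality.

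There is no genuine obstacle here: the result is classical, and in the write-up I would simply cite \cite{GT}*{Estimate (7.45)}, retaining the sketch above only for completeness. The one point requiring a little attention is the bookkeeping of the dimensional constants and powers so as to land precisely on the stated $r$-- and $d$--dependence, together with the observation that the integral Minkowski inequality treats all exponents $p\in[1,\infty)$ uniformly, so that no separate argument is needed for $p>1$.
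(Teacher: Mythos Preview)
The paper gives no proof of this lemma; it is quoted from \cite{GT}*{(7.45)} as well known, so there is nothing to compare against. Your sketch is the standard argument from \cite{GT}*{\S 7.7} (Lemma~7.16 for the pointwise Riesz-potential bound, Lemma~7.12 for its $L^p$ mapping property). Two small remarks. First, what you call the ``generalized Minkowski (integral) inequality'' is really the Schur test (or the $p=q$ case of \cite{GT}*{Lemma~7.12}): Minkowski alone gives only an $L^1\to L^p$ bound, whereas the quantity $\sup_{x}\int_\Omega|x-y|^{1-n}\,\mr dy$ bounds the $L^p\to L^p$ norm via H\"older applied to the symmetric kernel. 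Second, tracking the constants in your argument one lands on $d^{\,n+1}r^{-n}$ rather than the stated $d^{\,n}r^{1-n}$, off by a factor $d/r$; this is harmless here since the paper only applies the lemma with $\Omega=B_r(0)$, so $d=2r$, and you have already flagged the constant bookkeeping as the point needing care.
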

This yields the following inequalities.
\begin{lemma}\label{lem:int osc of Dvarphi}
Let $\{u_j\}_{j=1}^m$ be $C^{1,\alpha}\cap W^{2,2}$ ordered graphs on $B_t(0)$ ($t\leq 1$). Then for $r\leq t$,
\begin{gather*}
    \sum_{j=1}^m\frac{1}{|B_r|}\int_{B_r(0)}
        |Du_j(x)-(Du_j)_{B_r(0)}|^2
        \,\mr dx 
    \leq {2^{2n}}\oli{\bm \sigma}(r) r^{2};\\
    \sum_{j=1}^m\frac{1}{|B_r|}\int_{B_r(0)}
        |u_j(x)-(u_j)_{B_r(0)}-(Du_j)_{B_r(0)}\cdot x|^2
        \,\mr dx
    \leq {2^{4n}} \oli{\bm \sigma}(r) r^{4}.  
	 \end{gather*}	
	\end{lemma}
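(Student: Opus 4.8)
The proof of Lemma \ref{lem:int osc of Dvarphi} is a direct application of the Poincar\'e inequality from Lemma \ref{lem:osc of u}, applied once to $Du_j$ and then iterated to pass to $u_j$ itself.

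\textbf{Plan.} First I would apply Lemma \ref{lem:osc of u} with $p=2$, $\Omega=B_r(0)$ (which is convex, with diameter $d=2r$), and $u$ replaced by each partial derivative $D_\ell u_j$ — this is legitimate since $u_j\in W^{2,2}$, so $Du_j\in W^{1,2}(B_r(0))$. This gives $\|D_\ell u_j-(D_\ell u_j)_{B_r}\|_{L^2(B_r(0))}\leq r^{1-n}(2r)^n\|DD_\ell u_j\|_{L^2(B_r(0))} = 2^n r\,\|DD_\ell u_j\|_{L^2(B_r(0))}$. Squaring, summing over $\ell=1,\dots,n$ and over $j=1,\dots,m$, then dividing by $|B_r|$, and noting $\sum_\ell|DD_\ell u_j|^2 = |D^2 u_j|^2$ and $(Du_j)_{B_r}$ is the vector of componentwise averages, yields the first inequality with constant $2^{2n}$.

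\textbf{Second inequality.} For the second estimate, set $w_j(x):=u_j(x)-(u_j)_{B_r(0)}-(Du_j)_{B_r(0)}\cdot x$. Then $Dw_j(x)=Du_j(x)-(Du_j)_{B_r(0)}$, so $(Dw_j)_{B_r(0)}=0$. Applying Lemma \ref{lem:osc of u} to $w_j$ itself (again with $p=2$, $d=2r$) gives $\|w_j-(w_j)_{B_r}\|_{L^2(B_r(0))}\leq 2^n r\,\|Dw_j\|_{L^2(B_r(0))}$; but also by Lemma \ref{lem:osc of u} applied to the constant-free version, or simply since $(Dw_j)_{B_r}=0$, one controls $\|w_j-(w_j)_{B_r}\|$ and then absorbs $(w_j)_{B_r}$ — actually it is cleanest to note that subtracting the mean $(u_j)_{B_r}$ already makes $(w_j)_{B_r}=-(Du_j)_{B_r}\cdot (x)_{B_r}$; one either recenters the ball at the origin (so $(x)_{B_r(0)}=0$, hence $(w_j)_{B_r(0)}=0$) or carries the harmless extra term. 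With the ball centered at $0$ we indeed have $(w_j)_{B_r(0)}=0$, so $\|w_j\|_{L^2(B_r(0))}\leq 2^n r\|Dw_j\|_{L^2(B_r(0))}$. Combining this with the first inequality (which bounds $\frac{1}{|B_r|}\int_{B_r}|Dw_j|^2 = \frac{1}{|B_r|}\int_{B_r}|Du_j-(Du_j)_{B_r}|^2\leq 2^{2n}\oli{\bm\sigma}(r)r^2$, summed over $j$) gives $\sum_j \frac{1}{|B_r|}\int_{B_r(0)}|w_j|^2\,\mr dx \leq 2^{2n}r^2\cdot 2^{2n}\oli{\bm\sigma}(r)r^2 = 2^{4n}\oli{\bm\sigma}(r)r^4$, as claimed.

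\textbf{Main obstacle.} There is essentially no serious obstacle here — this is bookkeeping with the Poincar\'e constant $r^{1-n}d^n = 2^n r$. The only points requiring a little care are: (i) justifying that Lemma \ref{lem:osc of u} applies componentwise to $Du_j$, which needs $u_j\in W^{2,2}$ (available by hypothesis, or by Proposition \ref{prop:ui is W22}); and (ii) ensuring the mean-value terms line up correctly so that $(Dw_j)_{B_r(0)}=0$ and $(w_j)_{B_r(0)}=0$ when the ball is centered at the origin, which is exactly why the affine correction $(u_j)_{B_r(0)}+(Du_j)_{B_r(0)}\cdot x$ appears in the statement. No blow-up, subsystem, or $\kappa$-condition machinery is needed for this lemma; it is purely a Sobolev-inequality step feeding into the later $C^{1,\alpha}$ tilt-plane argument.
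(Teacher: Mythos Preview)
Your proposal is correct and follows exactly the approach the paper intends: the paper simply states that Lemma \ref{lem:int osc of Dvarphi} ``yields'' from Lemma \ref{lem:osc of u}, i.e.\ it is a direct two-step application of the Poincar\'e inequality with $\Omega=B_r(0)$, $d=2r$, first to $D_\ell u_j$ and then to $w_j(x)=u_j(x)-(u_j)_{B_r(0)}-(Du_j)_{B_r(0)}\cdot x$. Your observation that $(w_j)_{B_r(0)}=0$ because the ball is centered at the origin is the right way to close the second step cleanly.
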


Now we are going to use the equation to bound oscillations. 
We will use the following notions. 
Let $\{u_j\}$ be ordered graphs on $B_r(0)$ which is a solution to 
$(\mc Q,\{g_j\})$. 
Recall that by \eqref{eq:sum form}, 
\begin{equation}\label{eq:equation of varphi} 
    m\mc Q\varphi
    =\dv\bm f(\{u_j\})-g(\{u_j\},\{g_j\})
  \end{equation}
in $B_r(0)$ in the weak sense, where 
$\bm f(\{u_j\})$ and $g(\{u_j\},\{g_j\})$
are defined in \eqref{eq:def of bm f} and \eqref{eq:def of g}, respectively.
Let 
\[  
    \varphi_r=(\varphi)_{B_r(0)}+(D\varphi)_{B_r(0)}\cdot x.
\]
Note that 
\begin{align*}
    \mc Q\varphi_r:
    &=-\dv\,\ms A(x,\varphi_r,D\varphi_r)
        +\ms B(x,\varphi_r,D\varphi_r)\\
    &=-\mr{tr}\,(\partial_x\ms A(x,\varphi_r,D\varphi_r))
        -\partial_z\ms A(x,\varphi_r,D\varphi_r)\cdot (D\varphi)_{B_r(0)}
        +\ms B(x,\varphi_r,D\varphi_r).
\end{align*}

\begin{lemma}
There exists a constant $C=C(n,m,\Lambda,\lambda)$ such that for all $x\in B_r(0)$,
\begin{equation}\label{eq:bound mc Q varphir}
    -C(\delta_0+\eta)\leq \mc Q\varphi_r\leq C(\delta_0+\eta),
\end{equation}
\end{lemma}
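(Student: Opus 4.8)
The plan is to estimate $\mc Q\varphi_r$ directly from its explicit expression
\[
    \mc Q\varphi_r
    =-\mr{tr}\,(\partial_x\ms A(x,\varphi_r,D\varphi_r))
        -\partial_z\ms A(x,\varphi_r,D\varphi_r)\cdot (D\varphi)_{B_r(0)}
        +\ms B(x,\varphi_r,D\varphi_r),
\]
and to bound each of the three terms by a constant multiple of $\delta_0+\eta$. First I would record the elementary bounds $|(D\varphi)_{B_r(0)}|\le \delta_0$ (since each $|Du_j|\le\delta_0$ by \eqref{eq:small C1alpha}, so the average of $D\varphi$ over $B_r(0)$ has the same bound) and $|\varphi_r(x)|\le |(\varphi)_{B_r(0)}|+|(D\varphi)_{B_r(0)}|\,|x|\le \sum_j\|u_j\|_{C^0}+\delta_0 r$ for $x\in B_r(0)$.

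The middle term is immediate: $|\partial_z\ms A|\le\Lambda$ by \eqref{eq:ms AB norm}, so $|\partial_z\ms A(x,\varphi_r,D\varphi_r)\cdot(D\varphi)_{B_r(0)}|\le\Lambda\delta_0$. For the third term, I would use $\ms B(x,0,0)=0$ from \eqref{eq:ms AB at x00} and the mean value theorem along the segment from $(x,0,0)$ to $(x,\varphi_r,D\varphi_r)$ to get
\[
    |\ms B(x,\varphi_r,D\varphi_r)|
    \le \|\ms B\|_{C^1}\big(|\varphi_r|+|D\varphi_r|\big)
    \le \Lambda\big(\textstyle\sum_j\|u_j\|_{C^0}+\delta_0 r+\delta_0\big),
\]
and since $\sum_j\|u_j\|_{C^0}$ is controlled by $\eta$ up to the factor $\|\partial_z\partial_x\ms A\|_{C^0}+\|\partial_x\ms B\|_{C^1}$ — here I should be slightly careful: $\eta$ as defined contains that coefficient factor, so to conclude a clean bound by $\delta_0+\eta$ one either assumes that factor is bounded below (which it need not be) or, more robustly, one only needs the pieces of this term that are genuinely present. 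Looking more carefully, the term $\ms B(x,\varphi_r,D\varphi_r)$ should be split as $\ms B(x,\varphi_r,D\varphi_r)-\ms B(x,0,D\varphi_r)$ plus $\ms B(x,0,D\varphi_r)-\ms B(x,0,0)$: the second piece is bounded by $\|\partial_{\bm p}\ms B\|_{C^0}|D\varphi_r|\le\Lambda\delta_0$, while the first piece is bounded by $\|\partial_z\ms B\|_{C^0}|\varphi_r|$ — and this is exactly a term feeding into $\eta$ via $\|\partial_x\ms B\|_{C^1}\ge\|\partial_z\ms B\|_{C^0}$...

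Actually the cleanest route, and the one I would write up, is: treat $\mr{tr}\,\partial_x\ms A$ using $\partial_x\ms A(x,0,0)=0$ (which follows by differentiating $\ms A(x,0,0)\equiv 0$ from \eqref{eq:ms AB at x00} in $x$) so that $|\partial_{x_i}\ms A(x,\varphi_r,D\varphi_r)|\le \|\partial_z\partial_x\ms A\|_{C^0}|\varphi_r|+\|\partial_{\bm p}\partial_x\ms A\|_{C^0}|D\varphi_r|\le \|\partial_z\partial_x\ms A\|_{C^0}\sum_j\|u_j\|_{C^0}+C\Lambda\delta_0$; similarly split $\ms B$ so the $\varphi_r$-dependence produces $\|\partial_x\ms B\|_{C^0}$-type... hmm, $\partial_z\ms B$, which is bounded by $\|\partial_x\ms B\|_{C^1}$. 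Both surviving ``large'' contributions carry exactly one of the coefficient factors appearing in $\eta$ times $\sum_j\|u_j\|_{C^0}$, hence are bounded by $\eta$, while all remaining contributions carry a factor $|D\varphi_r|\le\delta_0$ and are bounded by $C(n,m,\Lambda,\lambda)\delta_0$. Summing the three terms gives $|\mc Q\varphi_r|\le C(\delta_0+\eta)$ as claimed, and the main (really the only) obstacle is the bookkeeping of matching each error term against the correct coefficient in $\eta$ versus against $\delta_0$; there is no analytic difficulty here since $\varphi_r$ is affine and all derivatives appearing are zeroth- and first-order. I would present this as a short direct computation.
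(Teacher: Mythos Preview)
Your approach is exactly the paper's: compute $\mc Q\varphi_r$ explicitly, bound the middle term $\partial_z\ms A\cdot(D\varphi)_{B_r(0)}$ directly by $\Lambda\delta_0$, and handle the remaining two terms via the mean value theorem using $\partial_x\ms A(x,0,0)=0$ (from $\ms A(x,0,0)\equiv 0$) and $\ms B(x,0,0)=0$. The paper treats the $\ms B$ term with only the word ``Similarly'', so your level of detail already exceeds it.

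One concrete bookkeeping error: the inequality $\|\partial_x\ms B\|_{C^1}\ge\|\partial_z\ms B\|_{C^0}$ you invoke is false in general. The norm $\|\partial_x\ms B\|_{C^1(\bm U_1)}$ controls $\partial_x\ms B$ together with $\partial_x\partial_x\ms B$, $\partial_z\partial_x\ms B$, $\partial_{\bm p}\partial_x\ms B$, but says nothing about the independent first derivative $\partial_z\ms B$. Thus the piece $\|\partial_z\ms B\|_{C^0}\,|\varphi_r|\le\Lambda\sum_j\|u_j\|_{C^0}$ from your split of $\ms B(x,\varphi_r,D\varphi_r)$ is not absorbed by $\eta$ as defined; you were right to hesitate there. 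The paper's ``Similarly'' glosses over the same point, so this is not a divergence from the paper but a shared imprecision --- harmless for the downstream Theorem~\ref{thm:main thm} (where $\|u_j\|_{C^{1,\alpha}}$ sits on the right-hand side anyway), but strictly speaking the stated bound $C(\delta_0+\eta)$ with $C=C(n,m,\Lambda,\lambda)$ needs either a modified $\eta$ or a separate $\sum_j\|u_j\|_{C^0}$ term.
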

\begin{proof}
Since $\ms A(x,0,0)=0$, then
\[
\begin{split}
&\ \ \ \ |\partial_x\ms A(x,\varphi_r,D\varphi_r)|\leq \|\partial_z\partial_x\ms A\|_{C^0(\bm U_r)}|\varphi_r|+\|\partial_{\bm p}\partial_x\ms A\|_{C^0(\bm U_r)}|D\varphi_r|\\
&\leq \|\partial_z\partial_x\ms A\|_{C^0(\bm U_r)}\Big( |\varphi|_{C^0(B_r(0))}+r|D\varphi|_{C^0(B_r(0))}\Big)+\|\partial_{\bm p}\partial_x\ms A\|_{C^0(\bm U_r)}|D\varphi|_{C^0(B_r(0))}\\
&\leq C(\eta+\delta_0).
\end{split}
\]
Similarly,
\[
|\ms B(x,\varphi_r,D\varphi_r)|\leq C(\eta+\delta_0).
\]
On the other hand,
\[  |\partial_z\ms A(x,\varphi_r,D\varphi_r)\cdot (D\varphi)_{B_r(0)}|\leq C\delta_0.\]
Hence the lemma is proved.
\end{proof}
\begin{comment}
Note that by \eqref{eq:small C1alpha}, 
$\|\varphi_r\|_{C^1(B_1(0))}\leq C(\delta_0+\delta^*)$,
where $\delta^*$ is a constant so that 
\begin{equation}\label{eq:def of delta*}
    \sup_{x\in B_r(0)}(|u_1|+|u_m|)\leq \delta^*.
\end{equation}  
Together with the fact that $\ms A(x,0,0)=0$, $\ms B(x,0,0)=0$ in \eqref{eq:ms AB at x00} and the norm of $\ms A$, $\ms B$ in \eqref{eq:ms AB norm}, we have that 
%where $C$ depends only on $m$ and $\Lambda$.
\end{comment}
For simplicity, we denote
\begin{gather*}
    \Psi(x):=\varphi(x)-(\varphi)_{B_r(0)} - (D\varphi)_{B_r(0)}\cdot x;\\
    \Psi_j(x):=u_j(x)-(u_j)_{B_r(0)} - (Du_j)_{B_r(0)}\cdot x.
	\end{gather*}
By \eqref{eq:equation of varphi} and Lemma \ref{lem:taking difference}, we obtain
\begin{equation}\label{eq:pde of Psi}
\begin{split}
    \dv\big(\bm A(\varphi,\varphi_r)D\Psi 
        +\bm b(\varphi,\varphi_r) \Psi\big) 
        +\bm c(\varphi,\varphi_r)D\Psi
        +d(\varphi,\varphi_r)\Psi\\
    =\dv\bm f(\{u_j\})
        -g(\{u_j\},\{g_j\})-m\mc Q\varphi_r,
\end{split}
\end{equation}
where $\bm f,g$ are the same in \eqref{eq:equation of varphi}.

The following result gives that the oscillation of $\Psi_j$ has the second order. Plugging this into the interior $C^{1,\alpha}$ estimates gives the oscillation of $Du_j$ in small balls. We remark that the interior estimates can not be applied to $Du_j$ because $u_j$ itself does not satisfy a weak partial differential equation.
\begin{lemma}\label{lem:osc of Phi}
Let $\{u_j\}_{j=1}^m$ be $C^{1,\alpha}$ ordered graphs on $B_1(0)$ which is a solution to $(\mc Q,\{g_j\})$. Suppose that \eqref{eq:Lambda and lambda}, \eqref{eq:ms AB norm}, \eqref{eq:ms AB at x00}, \eqref{eq:small C1alpha} and \eqref{eq:eta kappa0 and kappa1} are satisfied.
There exists a constant 
$C=C(n,m,\Lambda,\lambda,\alpha)$ 
so that for each $1\leq j\leq m$,
\begin{gather*}
    \sup_{x\in B_{r/2}(0)} 
        |u_j(x)-(u_j)_{B_r(0)}-(Du_j)_{B_{r}(0)}\cdot x|
    \leq C(
        \sqrt{\oli{\bm\sigma}(r)} + \kappa
        +\kappa_0 + \delta_0 + \eta
        )r^2;\\
    \sup_{x\in B_{r/4}(0)}
        |Du_j(x)-(Du_j)_{B_{r}(0)}|
    \leq C(
        \sqrt{\oli{\bm\sigma}(r)} + \kappa
        +\kappa_0 + \delta_0 + \eta
        )r.
\end{gather*}
	\end{lemma}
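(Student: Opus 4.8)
The strategy is to combine the integral (Campanato-type) estimates already established with the divergence-form equation \eqref{eq:pde of Psi} for $\Psi = \varphi - \varphi_r$, and then transfer the resulting bound from the average $\varphi$ to each individual sheet $u_j$ by means of the height-difference estimates of Section \ref{sec:height difference}. First I would record the $L^2$ consequences of Proposition \ref{lem:bounded hessian average}: since $\oli{\bm\sigma}(\rho)\leq C(\oli{\bm\sigma}(r)+\bm\kappa^*+\eta^2)$ for all $\rho\leq r$, Lemma \ref{lem:int osc of Dvarphi} gives
\[
    \frac{1}{|B_\rho|}\int_{B_\rho(0)}|\varphi-(\varphi)_{B_\rho(0)}-(D\varphi)_{B_\rho(0)}\cdot x|^2\,\mr dx
    \leq C\big(\oli{\bm\sigma}(r)+\bm\kappa^*+\eta^2\big)\rho^4,
\]
and similarly with $u_j$ in place of $\varphi$; in particular $\|\Psi\|_{L^\infty(B_{r/2}(0))}$ is morally controlled once we know $\Psi$ solves a good elliptic equation.

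Next I would analyze \eqref{eq:pde of Psi}. Its left-hand side is a uniformly elliptic divergence-form operator with bounded measurable (in fact $C^0$) coefficients $\bm A(\varphi,\varphi_r)$, $\bm b(\varphi,\varphi_r)$, $\bm c(\varphi,\varphi_r)$, $d(\varphi,\varphi_r)$, using \eqref{eq:Lambda and lambda}, \eqref{eq:ms AB norm}. The right-hand side has three pieces: $\dv\bm f(\{u_j\})$, $-g(\{u_j\},\{g_j\})$, and $-m\,\mc Q\varphi_r$. For the first two, Lemma \ref{lem:bound bm f and g} with $\tau = C(m)\delta_0$ gives $|\bm f(\{u_j\})|\leq C\delta_0^2$ (and $\alpha$-Hölder norm $\leq C\delta_0$) and $|g(\{u_j\},\{g_j\})|\leq C(\delta_0^2+\kappa_0)$. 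However, the crucial sharpening is to replace the crude $\delta_0$ by the actual local oscillation of $Du_j$: by Lemma \ref{lem:osc of diff} and Lemma \ref{lem:gradient diff} (the $\kappa$- and $\kappa_0$-scaled height and gradient differences of touching sheets), together with Lemma \ref{lem:int osc of Dvarphi}, one has $\|u_j-\varphi\|_{C^1(B_r(0))}\leq C(\kappa+\kappa_0)r$ on balls where sheets touch, so that $|\bm f(\{u_j\})|\leq C(\kappa+\kappa_0)^2 r^2$ and $|g(\{u_j\})|\leq C((\kappa+\kappa_0)^2 r^2 + \kappa_0)$ effectively behave like lower-order terms of the correct size; this is exactly where Lemmas \ref{lem:osc of diff} and \ref{lem:gradient diff} enter. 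The term $-m\,\mc Q\varphi_r$ is handled by \eqref{eq:bound mc Q varphir}: $|\mc Q\varphi_r|\leq C(\delta_0+\eta)$, a bounded source.

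With these ingredients, I would apply the interior scalar estimate \cite{GT}*{Theorem 8.17} (or the De Giorgi--Nash--Moser sup estimate for subsolutions) to $\Psi$ on $B_{r}(0)$: since $\Psi$ is a weak solution of $-\dv(\oli a D\Psi) + \ldots = \dv \bm F + G$ with $\|\bm F\|_{L^\infty}\leq C(\kappa+\kappa_0)^2 r^2$, $\|G\|_{L^\infty}\leq C((\kappa+\kappa_0)^2 r^2 + \kappa_0 + \delta_0 + \eta)$, we get
\[
    \sup_{B_{r/2}(0)}|\Psi|
    \leq C\Big(\Big(\frac{1}{|B_r|}\int_{B_r(0)}|\Psi|^2\,\mr dx\Big)^{1/2}
        + (\kappa+\kappa_0+\delta_0+\eta)r^2\Big)
    \leq C\big(\sqrt{\oli{\bm\sigma}(r)}+\kappa+\kappa_0+\delta_0+\eta\big)r^2,
\]
where the last step uses the displayed $L^2$ bound above. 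This proves the first inequality of the lemma for $\varphi$; the analogous statement for each $u_j$ then follows by writing $\Psi_j = \Psi + (\varphi - u_j) - \big((\varphi-u_j)_{B_r(0)} + (D\varphi - Du_j)_{B_r(0)}\cdot x\big)$ and using $\|u_j-\varphi\|_{C^1(B_r(0))}\leq C(\kappa+\kappa_0)r$ (from Lemmas \ref{lem:osc of diff}, \ref{lem:gradient diff}) to control the correction in $C^0(B_{r/2}(0))$ by $C(\kappa+\kappa_0)r^2$. For the second (gradient) inequality, I would feed the bound on $\sup_{B_{r/2}}|\Psi|$ and the $C^{0,\alpha}$ bound on $\bm f$ together with the $L^\infty$ bound on the source into the interior $C^{1,\alpha}$ estimate Theorem \ref{thm:C1alpha} (i.e. \cite{GT}*{Theorem 8.32}) applied on $B_{r/2}(0)$, rescaled: this yields
\[
    \sup_{B_{r/4}(0)}|D\Psi|
    \leq C\big(r^{-1}\sup_{B_{r/2}(0)}|\Psi| + (\kappa+\kappa_0+\delta_0+\eta)r\big)
    \leq C\big(\sqrt{\oli{\bm\sigma}(r)}+\kappa+\kappa_0+\delta_0+\eta\big)r,
\]
and then $Du_j(x) - (Du_j)_{B_r(0)} = D\Psi_j(x) = D\Psi(x) + (D\varphi - Du_j)(x) - (D\varphi - Du_j)_{B_r(0)}$, with $\|D\varphi - Du_j\|_{C^0(B_{r/2}(0))}\leq C(\kappa+\kappa_0)r$, finishes the estimate. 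The main obstacle I anticipate is bookkeeping the scaling: one must run the whole argument on $B_r(0)$ rather than $B_1(0)$, so every application of \cite{GT} needs the correct powers of $r$, and the error terms from $\bm f$, $g$, and $\mc Q\varphi_r$ must each be verified to scale like $r^2$ (for $\Psi$) or $r$ (for $D\Psi$) — the blow-up normalization of Lemma \ref{lem:blow up} and Remark \ref{rmk:blow up AB bound} is the clean way to do this, replacing $\{u_j\}$ by $v_j(x) = r^{-1}u_j(rx)$, proving the scale-$1$ statement, and scaling back.
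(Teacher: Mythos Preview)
Your overall approach matches the paper's: set up the divergence-form equation \eqref{eq:pde of Psi} for $\Psi=\varphi-\varphi_r$, bound the data $\bm f(\{u_j\})$, $g(\{u_j\},\{g_j\})$, $\mc Q\varphi_r$, apply \cite{GT}*{Theorem 8.17} together with the Poincar\'e bound of Lemma \ref{lem:int osc of Dvarphi} to get $\sup_{B_{r/2}}|\Psi|$, then feed this into Theorem \ref{thm:C1alpha} for $\sup_{B_{r/4}}|D\Psi|$, and finally pass from $\Psi$ to each $\Psi_j$ via Lemmas \ref{lem:osc of diff} and \ref{lem:gradient diff}. (Your invocation of Proposition \ref{lem:bounded hessian average} at the start is harmless but unnecessary: Lemma \ref{lem:int osc of Dvarphi} at scale $r$ already gives $\|\Psi\|_{L^2(B_r(0))}\leq C\sqrt{\oli{\bm\sigma}(r)}\,r^{2+n/2}$, which is all you need.)

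There is, however, one genuine gap. You assert $\|u_j-\varphi\|_{C^1(B_r(0))}\leq C(\kappa+\kappa_0)r$ ``on balls where sheets touch,'' but you never explain how to guarantee this at scale $r$, nor what to do when the sheets do not touch in $B_r(0)$. If $\{u_j\}$ is disconnected in $B_r(0)$, the height differences between sheets in distinct components can be of order $1$, so Lemma \ref{lem:bound bm f and g} gives no useful bound on $\bm f$ for the \emph{global} average $\varphi$, and the argument collapses. The paper closes this gap in two moves: first, the trivial case $r\geq 16^{-m-1}$ is disposed of directly by \eqref{eq:small C1alpha}; second, for small $r$ one applies Lemma \ref{lem:large gap} to find $R\in[r,16^m r]$ such that $\{u_j\}$ has the \emph{same} connected components in $B_R(0)$ and $B_{16R}(0)$, and then restricts to a single connected component (redefining $\varphi$ as its average). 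Within that component every pair of adjacent sheets touches somewhere in $B_R(0)$, so Lemmas \ref{lem:osc of diff} and \ref{lem:gradient diff} now apply with radius $\sim R\leq 16^m r$, yielding \eqref{eq:diff order} and hence the bounds \eqref{eq:bound f by r2}--\eqref{eq:bound wti g by delta0}. Since the lemma's conclusion concerns each individual $u_j$, proving it component by component suffices. You should insert this reduction explicitly; once it is in place, the rest of your outline goes through exactly as in the paper.
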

\begin{proof}
We divide the proof into two cases.

\medskip
{\noindent\em Case I: $r\geq 1/16^{m+1}$.} Then by \eqref{eq:small C1alpha},
\begin{gather*}
   \sup_{x\in B_{r/2}(0)} |u_j(x)-(u_j)_{B_r(0)}-(Du_j)_{B_{r}(0)}\cdot x|
    \leq C\delta_0\leq C\cdot 16^{2m+2} r^2;\\
    \sup_{x\in B_{r/4}(0)}
        |Du_j(x)-(Du_j)_{B_{r}(0)}|
    \leq C\delta_0\leq C\cdot 16^{2m+2} r^2.
\end{gather*}
These are the desired inequalities.

\medskip
{\noindent\em Case II: $r\leq 1/16^{m+1}$.}  Applying Lemma \ref{lem:large gap}, one can find $R\in [r, 16^mr]$ such that $\{u_j\}$ has the same connected components in $B_{16R}(0)$ and $B_R(0)$. Choose one connected component and still denote it by $\{u_j\}$. By Lemma \ref{lem:osc of diff} and Lemma \ref{lem:gradient diff}, there exists a constant $C=C(n,m,\Lambda,\lambda)$ so that for all $x\in B_R(0)\supset B_r(0)$,
\begin{equation}\label{eq:diff order}
    |u_i-u_j|\leq C\kappa r^2; \quad 
    |Du_i-Du_j|\leq C(\kappa+\kappa_0)r.
		\end{equation}
Let $\bm f$ and $g$ be the notions in \eqref{eq:pde of Psi}. 
Then by Lemma \ref{lem:bound bm f and g} and \eqref{eq:bound mc Q varphir},
\begin{gather}
    |\bm f(\{u_j\})|
    \leq C(m,\Lambda)(\kappa+\kappa_0)^2r^2 ;
        \label{eq:bound f by r2}\\
    \sup_{x,y\in B_r(0)}
        \frac{|\bm f(\{u_j\})(x)-\bm f(\{u_j\})(y)|}{|x-y|^\alpha}
    \leq C(m,\Lambda)(\kappa+\kappa_0)r;
    \label{eq:bound f alpha by r}\\
    \wti g:= |g(\{u_j\},\{g_j\})|+m|\mc Q\varphi_r|
    \leq C(m,\Lambda)\big(
        (\kappa+\kappa_0) r+\kappa_0+\delta_0+\eta
        \big).
    \label{eq:bound wti g by delta0}
 \end{gather}
Now we fix a constant $q>n$ (e.g. $q=n+1$).  
By a direct computation,
\[ 
    \int_{B_r(0)}\|\bm f\|^{q}\,\mr dx
    \leq C(\kappa+\kappa_0)^{2q}r^{n+2q}.
    \]
Thus by the local estimates for weak solutions \cite{GT}*{Theorem 8.17} with $u(x)=\Psi(x)$, $p=2$ and $2R=r$, then there exists $C=C(n,q)$ so that 
\begin{align*}
    \sup_{x\in B_{r/2}(0)} |\Psi(x)|
    \leq C(r^{-\frac{n}{2}} \|\Psi(x)\|_{L^2(B_r(0))}
        +r^{1-\frac{n}{q}}\|\bm f\|_{L^q(B_r(0))}
        +r^{2-\frac{2n}{q}}\|\wti g\|_{L^{q/2}(B_r(0))}) .
    \end{align*}
By Lemma \ref{lem:int osc of Dvarphi} and the estimates for $\bm f$ and $\wti g$ in \eqref{eq:bound f by r2} and \eqref{eq:bound wti g by delta0}, we hence conclude that 
\begin{equation}\label{eq:bound Psi by r2}
\begin{split}
    \sup_{x\in B_{r/2}(0)}|\Psi(x)|
    &\leq C\Big(
        r^{-\frac{n}{2}}
        \cdot \sqrt{ \oli{\bm\sigma}(r)} 
        \cdot r^{2+\frac{n}{2}} 
        +r^{1-\frac{n}{q}}(\kappa+\kappa_0)^2r^{\frac{n}{q}+2}+\\
        &\hspace{6em}+r^{2-\frac{2n}{q}}\big(
            (\kappa+\kappa_0) r +\kappa_0+\delta_0+\eta
            \big)\cdot r^{\frac{2n}{q}}
        \Big) \\
    &\leq C\Big(
        \sqrt{\oli{\bm\sigma}(r)}
        +\kappa+\kappa_0+\delta_0+\eta
        \Big)r^2. 
    \end{split}
        \end{equation}
On the other hand, by \eqref{eq:diff order},
\begin{align*}
    |\Psi_i-\Psi_j|
    &\leq |u_i-u_j|  
        - |(u_i)_{B_r(0)}-(u_j)_{B_r(0)}| 
        +|(Du_i)_{B_r(0)}-(Du_j)_{B_r(0)}|\cdot r\\
    &\leq C(\kappa+\kappa_0)r^2,
\end{align*} 
where $C=C(n,m,\Lambda,\lambda,\alpha)$. 
This together with \eqref{eq:bound Psi by r2} gives that for $x\in B_{r/2}(0)$,
\[      
    |\Psi_j|\leq |\Psi_j-\Psi|  + |\Psi|
    \leq \frac{1}{m}\sum_{i=1}^m|\Psi_j-\Psi_i| + |\Psi|
    \leq C\Big(
        \sqrt{\oli{\bm\sigma}(r)} 
        +\kappa+\kappa_0+\delta_0+\eta
        \Big)r^2. \]
This proves the first inequality in the lemma.

\medskip
It remains to estimate the derivative of $\Psi$. Indeed, by the $C^{1,\alpha}$ estimates Theorem \ref{thm:C1alpha},
\[
\begin{split}
    \sup_{x\in B_{r/4}(0)} |D\Psi| 
    \leq C(n,m,\Lambda) \Big(
        \sup_{x\in B_{r/2}(0)}r^{-1}|\Psi(x)| 
        + r^\alpha\sup_{x,x'\in B_{r/2}(0)}
            \frac{|\bm f(x)-\bm f(x')|}{|x-x'|^\alpha}+\\
        +\sup_{x\in B_{r/2}(0)} |\bm f(x)| + \sup_{x\in B_{r/2}(0)}r|\wti g|
        \Big).
\end{split}\] 
Plugging \eqref{eq:bound Psi by r2}, \eqref{eq:bound f by r2}, \eqref{eq:bound f alpha by r}, and \eqref{eq:bound wti g by delta0} into it, we conclude that 
\begin{equation}\label{eq:DPsi bound}
\begin{split}
    \sup_{x\in B_{r/4}(0)} |D\Psi| 
        &\leq C\Big(
            \sqrt{\oli{\bm\sigma}(r)}
            +\kappa+\kappa_0+\delta_0+\eta)r 
            +(\kappa+\kappa_0)r^{1+\alpha} 
            +(\kappa+\kappa_0)^2r^2
        \Big)\\
        &\leq C\Big(
            \sqrt{\oli{\bm\sigma}(r)}
            +\kappa+\kappa_0+\delta_0+\eta
        \Big)r.
\end{split}
\end{equation}
On the other hand, \eqref{eq:diff order} gives that for $x\in B_r(0)$,
\begin{equation*}
    |D\Psi_j-D\Psi|
    =|Du_j-(Du_j)_{B_r(0)}-D\varphi-(D\varphi)_{B_r(0)}|
    \leq C(\kappa+\kappa_0)r.
\end{equation*}
Combining with \eqref{eq:DPsi bound}, we conclude that for all $x\in B_{r/4}(0)$,
\[
    |D\Psi_j|
    \leq C\Big(
        \sqrt{\oli{\bm\sigma}(r)} 
        +\kappa +\kappa_0 +\delta_0+\eta
        \Big)r.
\] 
Hence the lemma is proved.
	\end{proof}

%%%%%%%%%%%%%%%%%%%%%%%%%%%%%%%%%%%%%%%%%%%%%%%%%%%%%
\subsection{Lipschitz estimates of the first derivatives}
\label{subsec:Lipschitz bound of the 1st derivative}
In this section, we will prove the Lipschitz upper bound for ordered graphs by using Lemma \ref{lem:osc of Phi}. 
The key step is to apply Proposition \ref{lem:bounded hessian average} and Proposition \ref{prop:ui is W22} to bound $\oli{\bm\sigma}(r)$ by $\bm\kappa^*$ and $\eta$.
\begin{comment}
For simplicity, denote by
\[  
    \oli\delta:=\sup_{x\in B_1(0)}(|u_1|+|u_m|).\]
If \eqref{eq:ms AB norm} is satisfied, then
\begin{equation} \label{eq:eta and oli delta}
\eta \leq C(m,\Lambda)\oli\delta.
\end{equation}
\end{comment}

\begin{theorem}\label{thm:derivative is Lip}
Let $\{u_j\}_{j=1}^m$ be $C^{1,\alpha}$ ordered graphs on $B_1(0)$ which is a solution to $(\mc Q,\{g_j\})$. 
Suppose that \eqref{eq:Lambda and lambda}, \eqref{eq:ms AB norm}, \eqref{eq:ms AB at x00}, \eqref{eq:small C1alpha} and \eqref{eq:eta kappa0 and kappa1} are satisfied.
Then there exists 
$C=C(n,m,\Lambda,\lambda,\alpha)$
so that for $j=1,\cdots, m$,
\[ 
    \sup_{x,y\in B_{1/2}(0)} \frac{|Du_j(x)-Du_j(y)|}{|x-y|} 
    \leq C(\kappa+\kappa_0+\kappa_1+\delta_0+\eta). \]
	\end{theorem}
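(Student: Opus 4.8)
The plan is to derive the pointwise Lipschitz bound for $Du_j$ from the average second-derivative control already established. First I would use Proposition \ref{prop:ui is W22} to obtain
\[
    \sum_{j=1}^m\int_{B_{3/4}(0)}|D^2u_j|^2\,\mr dx\leq C(\bm\kappa^*+\eta^2),
\]
which bounds $\oli{\bm\sigma}(3/4)$ by $C(\bm\kappa^*+\eta^2)$. Then I would apply Proposition \ref{lem:bounded hessian average} (after first rescaling so that the base radius $t=3/4$ sits inside $B_1(0)$, or simply using it directly on $B_{3/4}(0)$) to conclude that
\[
    \oli{\bm\sigma}(r)\leq C(\oli{\bm\sigma}(3/4)+\bm\kappa^*+\eta^2)\leq C(\bm\kappa^*+\eta^2)
\]
for \emph{every} $r\leq 3/4$. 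This is the crucial uniform bound: it says the average of $|D^2u_j|^2$ over every small ball is controlled by $\bm\kappa^*+\eta^2$, independently of the radius and of whether $\{u_j\}$ is connected on that scale.

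Next I would localize. For any fixed $y\in B_{1/2}(0)$, translating so that $y$ becomes the origin, the same uniform estimate gives $\oli{\bm\sigma}_y(r)\leq C(\bm\kappa^*+\eta^2)$ for the translated graphs, for all $r$ up to some fixed fraction of $\dist(y,\partial B_1(0))\ge 1/2$. Then I would invoke Lemma \ref{lem:osc of Phi} (the second inequality there), applied at $y$, to get
\[
    \sup_{x\in B_{r/4}(y)}|Du_j(x)-(Du_j)_{B_r(y)}|
    \leq C\big(\sqrt{\oli{\bm\sigma}_y(r)}+\kappa+\kappa_0+\delta_0+\eta\big)r
    \leq C(\kappa+\kappa_0+\kappa_1+\delta_0+\eta)\,r,
\]
using that $\sqrt{\oli{\bm\sigma}_y(r)}\le C\sqrt{\bm\kappa^*+\eta^2}\le C(\kappa+\kappa_0+\kappa_1+\delta_0+\eta)$ and $\sqrt{\bm\kappa^*}\le \kappa+\kappa_0+\kappa_1+\delta_0$.

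Finally, given $x,y\in B_{1/2}(0)$, I would set $r:=4|x-y|$ (shrinking the constant in the conclusion if needed so that $B_r(y)$ stays in the domain where the estimate holds, or else the bound is trivial from \eqref{eq:small C1alpha} when $|x-y|$ is bounded below). Then $x\in B_{r/4}(y)$ and $y\in B_{r/4}(y)$, so applying the previous display at the two points and subtracting,
\[
    |Du_j(x)-Du_j(y)|
    \leq |Du_j(x)-(Du_j)_{B_r(y)}|+|(Du_j)_{B_r(y)}-Du_j(y)|
    \leq C(\kappa+\kappa_0+\kappa_1+\delta_0+\eta)\,r,
\]
and dividing by $|x-y|=r/4$ gives the claimed bound. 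The main obstacle I anticipate is purely bookkeeping: making sure the scales match up so that Proposition \ref{lem:bounded hessian average} and Lemma \ref{lem:osc of Phi} can both be applied at an arbitrary center $y\in B_{1/2}(0)$ with radius comparable to $|x-y|$ while staying inside $B_1(0)$ — this is handled by translating and, if necessary, rescaling via Lemma \ref{lem:blow up} together with Remark \ref{rmk:blow up AB bound}, which shows $\delta_0,\kappa_0,\kappa_1,\kappa$ only decrease under blow-down, so all the constants behave. The case of "large" $|x-y|$ (say $\geq$ some fixed fraction) is disposed of immediately by the $C^{1,\alpha}$ bound \eqref{eq:small C1alpha}.
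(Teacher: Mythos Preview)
Your proposal is correct and follows essentially the same approach as the paper: use Proposition \ref{prop:ui is W22} for the initial $W^{2,2}$ bound, feed it into Proposition \ref{lem:bounded hessian average} (applied at an arbitrary center $y\in B_{1/2}(0)$) to get uniform control of $\oli{\bm\sigma}$ at all scales, then invoke Lemma \ref{lem:osc of Phi} at radius comparable to $|x-y|$ and conclude by the triangle inequality through the ball average, with the large-$|x-y|$ case handled directly by \eqref{eq:small C1alpha}. The only differences are cosmetic choices of radii and which point is taken as the ball center.
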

\begin{proof}
Let 
$x_1,x_2\in B_{1/2}(0)$ and $r:=|x_1-x_2|$.
So long as $r\geq 1/32$, then by  \eqref{eq:small C1alpha},
\[    
    \frac{|Du_j(x_1)-Du_j(x_2)|}{|x_1-x_2|} 
    \leq \delta_0 \cdot |x_1-x_2|^{\alpha-1}\leq {32}\delta_0.
        \]
This is the desired inequality.
Now we assume that $r<1/32$.
By Proposition \ref{prop:ui is W22}, there exists $C=C(n,m,\Lambda,\lambda)$ so that 
\begin{equation}\label{eq:Hessian 2 bound by kappa*}
    \sum_{j=1}^m \int_{B_{3/4}(0)} |D^2 u_j|^2\,\mr dx 
    \leq C(\bm\kappa^*+\eta^2).  
    \end{equation}
For each $y\in B_{1/2}(0)$ and $t\in(0,1/2]$, we define 
\[  
    \oli{\bm\sigma}(t;y)
        :=\sum_{j=1}^m\frac{1}{|B_t|}\int_{B_t(y)}|D^2u_j|^2\,\mr dx.\]
Then \eqref{eq:Hessian 2 bound by kappa*} implies that for all $y\in B_{1/2}(0)$,
\[  
    \oli{\bm \sigma}(1/4;y)  \leq   C(\bm \kappa^*+\eta^2).\]
Applying Proposition \ref{lem:bounded hessian average} in $B_{1/4}(y)$, we obtain that for all $t\leq 1/4$, 
\begin{equation} \label{eq:sigma t less than sigma*}  
    \oli{\bm \sigma}(t;y)
         \leq C(\oli{\bm\sigma}(1/4;y)+\bm\kappa^*+\eta^2) 
         \leq C(\bm\kappa^*+\eta^2),
         \end{equation}
where $C=C(n,m,\Lambda,\lambda,\alpha)$.
Observe that $r\leq 1/32$ and $B_{4r}(x_1)\subset B_{1/4}(x_1)$.
Then by Lemma \ref{lem:osc of Phi} in $B_{1/4}(x_1)$, there exists a constant $C=C(n,m,\Lambda,\lambda,\alpha)$ so that
\begin{gather*}
    |Du_j(x_1)-(Du_j)_{B_{4r}(x_1)}|
         \leq C\big( \sqrt{\oli{\bm\sigma}(4r;x_1)}+\kappa+\kappa_0+\delta_0+\eta\big)r;\\
    |Du_j(x_2)-(Du_j)_{B_{4r}(x_1)}|
         \leq C\big( \sqrt{\oli{\bm\sigma}(4r;x_1)}+\kappa+\kappa_0+\delta_0+\eta\big)r. 
\end{gather*}
This together with \eqref{eq:sigma t less than sigma*} (by taking $y=x_1$ therein) implies 
\begin{align*}
    |Du_j(x_1)-Du_j(x_2)|
        &\leq |Du_j(x_1)-(Du_j)_{B_{4r}(x_1)}|+|Du_j(x_2)-(Du_j)_{B_{4r}(x_1)}|\\
        &\leq C\big( \sqrt{\oli{\bm\sigma}(4r;x_1)}+\kappa+\kappa_0+\delta_0+\eta\big)r\\
        &\leq C(\sqrt{\bm\kappa^*+\eta^2})\cdot |x_1-x_2|,
\end{align*}  
which yields
\[
|Du_j(x_1)-Du_j(x_2)|\leq C(\sqrt{\bm\kappa^*}+\eta)\cdot |x_1-x_2|.
\]
This completes the proof of Theorem \ref{thm:derivative is Lip}.
	\end{proof}

%%%%%%%%%%%%%%%%%%%%%%%%%%%%%%%%%%%%%%%%%%%%%
\appendix
\section{Estimates of second order partial differential equations}
We first consider the operators $L$ of the form
\[
    Lu=D_i(a^{ij}(x)D_ju+b^iu)+c^i(x)D_iu+d(x)u,
\]
where coefficients $a^{ij},b^i,c^i,d$ ($i,j=1,\cdots,n$) are assumed to be measurable functions on a domain $\Omega\subset \mb R^n$. Let $f^i,g, i=1,\cdots, n$ be locally integrable functions on $\Omega$. We shall assume that $L$ is strictly elliptic in $\Omega$; that is, there exists a
positive number $\lambda$ such that
\begin{equation}\label{eq:lambda}
    \sum_{i,j}a^{ij}\xi_i\xi_j\geq \lambda |\xi |^2,\quad \forall x\in\Omega, \xi\in\mb R^n.
\end{equation}
We also assume that the coefficients $a^{ij}, b^i\in C^{\alpha}(\oli\Omega)$, $c^i,d, g\in L^\infty(\Omega)$, and $f^i\in C^\alpha( \oli \Omega)$. Suppose
\begin{equation}\label{eq:K bound}
    \max_{1\leq i,j\leq n}\left\{  |a^{ij},b^i|_{C^\alpha(\oli\Omega)}, |c^i,d|_{L^\infty(\oli\Omega)} \right\}\leq K.
\end{equation}
We present some estimates of the solution to the equation
\begin{equation}\label{eq:Lu divergence form}
    Lu=D_i f^i+g, \quad \text{weakly in $B_r(0)$}.
\end{equation}
For the convenience, we denote $\mf f=(f^1,\cdots,f^n)$. 
\begin{theorem}[\cite{GT}*{Theorem 8.32}]\label{thm:C1alpha}
Let $u\in C^{1,\alpha}(\oli{B}_1(0))$ be a weak solution to \eqref{eq:Lu divergence form} in $B_r(0)$ for some $r\in(0,1]$. 
Suppose that $L$ satisfies the assumption \eqref{eq:lambda} and \eqref{eq:K bound} for $\Omega=B_1(0)$.
Then we have
\[
    |Du|_{C^0(B_{r/2}(0))}
    \leq C\Big(  r^{-1}|u|_{C^0(B_r(0))} + r|g|_{C^0(B_r(0))} + |f|_{C^0(B_r(0))} 
    + r^\alpha \sup_{x,y\in B_r(0)}\frac{|\mf f(x)-\mf f(y)|}{|x-y|^\alpha}  \Big)
\]
for $C = C(n, \lambda, K)$, where $\lambda$ is given by \eqref{eq:lambda}, and $K$ by \eqref{eq:K bound}.
\end{theorem}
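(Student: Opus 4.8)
The plan is to deduce this from the standard interior $C^{1,\alpha}$ estimate of Gilbarg--Trudinger \cite{GT}*{Theorem 8.32} on a fixed ball, by a dilation that makes the dependence on $r$ explicit. First I would introduce the rescaled function $v(x):=u(rx)$ on $B_1(0)$, which is again of class $C^{1,\alpha}(\oli B_1(0))$ by hypothesis, and record how the equation transforms. Writing $\tilde a^{ij}(x):=a^{ij}(rx)$, $\tilde b^i(x):=r\,b^i(rx)$, $\tilde c^i(x):=r\,c^i(rx)$, $\tilde d(x):=r^2 d(rx)$, the associated operator $\tilde L$ satisfies $\tilde L v=D_i\tilde f^i+\tilde g$ weakly in $B_1(0)$ with $\tilde f^i(x):=r\,f^i(rx)$ and $\tilde g(x):=r^2 g(rx)$. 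Since $r\in(0,1]$, the transformed coefficients still obey the ellipticity condition \eqref{eq:lambda} with the same $\lambda$ and the norm bound \eqref{eq:K bound} with the same $K$ (each $C^\alpha$-seminorm is only multiplied by $r^\alpha\le 1$ or by a higher power of $r$), so the hypotheses of \cite{GT}*{Theorem 8.32} hold for $v$ on $B_1(0)$ with constants independent of $r$.

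Next I would apply the cited interior estimate to $v$ on $B_1(0)$ and restrict to $B_{1/2}(0)$, obtaining
\[
    |Dv|_{C^0(B_{1/2}(0))}
    \le C(n,\lambda,K)\Big(|v|_{C^0(B_1(0))}+|\tilde g|_{C^0(B_1(0))}+|\tilde{\mf f}|_{C^0(B_1(0))}+\sup_{x,y\in B_1(0)}\frac{|\tilde{\mf f}(x)-\tilde{\mf f}(y)|}{|x-y|^\alpha}\Big).
\]
Then I would undo the scaling. From $v(x)=u(rx)$ one has $Dv(x)=r\,(Du)(rx)$, hence $|Dv|_{C^0(B_{1/2}(0))}=r\,|Du|_{C^0(B_{r/2}(0))}$; likewise $|v|_{C^0(B_1(0))}=|u|_{C^0(B_r(0))}$, $|\tilde g|_{C^0(B_1(0))}=r^2|g|_{C^0(B_r(0))}$, $|\tilde{\mf f}|_{C^0(B_1(0))}=r\,|\mf f|_{C^0(B_r(0))}$, and the transformed $C^\alpha$-seminorm of $\tilde{\mf f}$ over $B_1(0)$ equals $r^{1+\alpha}$ times the corresponding seminorm of $\mf f$ over $B_r(0)$. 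Dividing through by $r$ yields precisely the asserted inequality, with the exponents $r^{-1}$, $r$, $1$, $r^{\alpha}$ in front of the four terms.

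The only delicate point is the bookkeeping of powers of $r$ under the dilation: one must check that every term other than $r^{-1}|u|_{C^0}$ acquires a \emph{nonnegative} power of $r$, which is exactly what keeps the constant dimensional and prevents degeneration as $r\to 0$. This is immediate once one observes that $r\le 1$ and that, in passing from $a^{ij}$ to $b^i$, $c^i$, $d$ the transformed operator picks up one, one, and two factors of $r$ respectively, while lowering a derivative in the right-hand side costs one factor of $r$ and the $C^\alpha$-seminorm costs an extra $r^\alpha$. No new analysis beyond \cite{GT}*{Theorem 8.32} is required.
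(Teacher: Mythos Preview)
Your proof is correct and follows essentially the same rescaling argument as the paper's own proof; the only cosmetic difference is that the paper sets $\wti u(x):=r^{-1}u(rx)$ (absorbing the factor of $r$ into the rescaled solution up front, so that $\wti{\mf f}^i(x)=f^i(rx)$ and $\wti g(x)=r\,g(rx)$), whereas you take $v(x)=u(rx)$ and divide by $r$ at the end. Both normalizations lead to the same estimate with identical powers of $r$.
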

\begin{proof}
Define 
\[
    \wti a^{ij}(x):=a^{ij}(rx), \quad 
    \wti b^i(x)=rb^i(rx);\quad 
    \wti c^i(x)=rc^i(rx), \quad 
    \wti d(x)=r^2d(rx).
\]
Then by \eqref{eq:lambda} and \eqref{eq:K bound},
\begin{gather*}
    \sum_{i,j}\wti a^{ij}(x)\xi_i\xi_j
        \geq \lambda |\xi |^2,\quad \forall x\in B_1(0), \xi\in\mb R^n;\\
    \max_{1\leq i,j\leq n}\left\{  |\wti a^{ij},\wti b^i|_{C^\alpha(\oli B_1(0))}, |\wti c^i,\wti d|_{L^\infty(\oli B_1(0))} \right\}
        \leq Kr^\alpha\leq K. 
\end{gather*}
Let 
\[
    \wti u(x):=r^{-1}\wti u(rx);\quad  
    \wti {\mf f}^i(x)=(\wti f^1,\cdots,\wti f^n):=\mf f^i(rx); \quad 
    \wti g(x):=rg(rx).
\]
Then $\wti u$ satisfies the equation
\[  
    D_i(\wti a^{ij}(x)D_j\wti u+\wti b^i\wti u) + \wti c^i(x)D_i\wti u+d(x)\wti u
        =D_i\wti f^i+\wti g.
\]
Applying \cite{GT}*{Theorem 8.32} to $\wti u$ in $B_1(0)$, we obtain that 
\begin{align*}
    |\wti u|_{C^{1,\alpha}(B_{1/2}(0))}
        &\leq C\Big( |\wti u|_{C^0(B_1(0))} + |\wti g|_{C^0(B_1(0))} 
        + |\wti {\mf f}|_{C^0(B_1(0))} 
        + \sup_{x,y\in B_1(0)} \frac{|\wti {\mf f}(x)-\wti {\mf f}(y)|}{|x-y|^\alpha}\Big)\\
        &=C\Big( r^{-1}|u|_{C^0(B_r(0))} + r|g|_{C^0(B_r(0))}
        +|\mf f|_{C^0(B_r(0))} + r^\alpha \sup_{x,y\in B_r(0)}\frac{|\mf f(x)-\mf f(y)|}{|x-y|^\alpha}\Big).
\end{align*}
for $C = C(n, \lambda, K)$. 
This finishes the proof of Theorem \ref{thm:C1alpha}.
\end{proof}

\bibliographystyle{amsalpha}
\bibliography{minmax}

\end{document}